\def\caP{{\cal P}(N)}
\def\dv{\mathbb}
\def\llangle{\langle}
\def\rrangle{\rangle}
\def\calB{{\cal B}}
\def\calC{{\cal C}}
\def\calD{{\cal D}}
\def\0{\mbox{\sf 0}}
\def\b0{\mbox{\bf 0}}
\def\nmT{N\setminus T}
\def\nmS{N\setminus S}
\def\nmA{N\setminus A}
\def\nmM{N\setminus M}
\def\nmD{N\setminus D}
\def\smin{\setminus}  
\def\m{m} 
\def\mre{m^{*}} 
\def\mstar{m^{\star}}
\def\mg{\widetilde{m}}
\def\mgr{\widetilde{m^{*}}}
\def\mrA{m_{A}}  
\def\mrS{m_{S}}  
\def\muA{\upsilon^{\uparrow A}} 
\def\muR{\upsilon^{\uparrow R}} 
\def\mui{\upsilon^{\uparrow\{i\}}} 
\def\muo{\upsilon^{\uparrow\emptyset}} 
\def\r{r} 
\begin{document}

\title{Facets of the Cone of Totally Balanced Games\thanks{This research has been supported by the grant GA\v{C}R n.\ 16-12010S.
We are grateful to Eric Quaeghebeur for providing us with results of his
computations related to \cite{Quaeghebeur09}.}}
\author{Tom\'{a}\v{s} Kroupa \and Milan Studen\'{y}}
\institute{T. Kroupa \and M. Studen\'{y} \at
The Czech Academy of Sciences, Institute of Information Theory and Automation, Pod Vod\'arenskou v\v{e}\v{z}\'i 4, 182 08 Prague, Czech Republic\\
              \email{kroupa@utia.cas.cz}          
}

\maketitle

\begin{abstract}
The class of totally balanced games is a class of transferable-utility coalitional games providing important models of cooperative behavior used in mathematical economics. They coincide with market games of Shapley and Shubik and every totally balanced game is also representable as the minimum of a finite set of additive games. In this paper we characterize the polyhedral cone of totally balanced games by describing its facets. Our main result is that there is a correspondence between facet-defining inequalities for the cone and the class of special balanced systems of coalitions, the so-called irreducible min-balanced systems. Our method is based on refining the notion of
balancedness introduced by Shapley. We also formulate a conjecture about what are the facets of the cone of exact games, which addresses an open problem appearing in the literature.
\keywords{Coalitional game \and Totally balanced game \and Balanced system \and Polyhedral cone}
\end{abstract}

\section{Introduction}
Totally balanced games were introduced by Shapley and Shubik in their study of market games
with transferable utility \cite{ShapleyShubik69}, which arise naturally in the area of mathematical economics.
Loosely speaking, market games are coalitional games derived from a market in which the production functions are concave and continuous.
A transferable-utility coalitional game is termed totally balanced whenever each of its subgames is balanced.
The two families of coalitional games coincide, that is, a coalitional game is totally balanced if and only if
it is a market game; see \cite{ShapleyShubik69}. Kalai and Zemel \cite{KalaiZemel1982} showed that every
totally balanced game is the minimum of a finite set of additive (inessential) coalitional games.
This characterization enabled them to show that the class of totally balanced games is the same as the class
of certain games derived from graphs, the so-called flow games; cf. \cite[Theorems 1--2]{KalaiZemel1982}.
Another characterization was given by Cs\'{o}ka et al. \cite{Csoka2009}: totally balanced games are precisely risk allocation games.
Despite their relative simplicity, the class of totally balanced games thus offers a rich source of examples
of game-theoretic and economical phenomena, since it also includes minimum cost spanning tree games \cite{Bird76}, assignment games \cite{ShapleyShubik71},
linear production games \cite{Owen}, and games arising from controlled programming problems \cite{DubeyShapley}.

The notion of a balanced collection of coalitions, which was introduced by Shapley \cite{Shapley1967On-balanced-set},
is a key ingredience in dealing with totally balanced games and their subfamilies. In the literature there exist
several attempts at modifying that concept; these appear to be important in the study of special classes of games and their properties. Specifically, Cs\'{o}ka et al.\/ \cite{Csoka2011Balancedness} use exact balancedness and overbalancedness to provide new characterizations of the class of exact games. Lohmann et al.\/ \cite{Lohmann2012minimal} then employed minimal exact balanced collections and showed that only these collections
are needed to guarantee exactness of a game.

In this paper we characterize the set of totally balanced games as a convex polyhedral cone.
This can be done in several ways. Our main result, Theorem \ref{thm.total}, offers a complete characterization of facet-defining inequalities of the cone; each of these is associated with a special min-balanced system.
The paper is structured as follows. We introduce our notation and terminology in Section \ref{sec.basic-notions},
where we also review basic facts about minimal balanced systems of coalitions in the style of Shapley \cite{Shapley1967On-balanced-set}. In Section \ref{sec.our-conventions} we explain our framework for analyzing cones of set functions and introduce our machinery for manipulating valid linear inequalities. This approach makes it possible to show that important cones of set functions are closed under the operation of reflection (Definition~\ref{def.reflection}), which result even cannot be formulated with the usual definition of a coalitional game as a set function vanishing at the empty set. Moreover, facet-defining inequalities are often coupled into pairs of certain conjugate inequalities (Lemma~\ref{lem.facet-conjugate}). For the purposes of this paper it is useful to work with linear inequalities in a special form, which is derived from min-balanced systems and normalized in a particular way; this procedure is described in Section~\ref{ssec.min-balan-ineq}. The crucial notion, which is necessary for the formulation of our main theorem, is the concept of an \emph{irreducible min-balanced system} discussed in Section~\ref{sec.irreducible}. Every other min-balanced system (= a reducible one) is redundant for the description of totally balanced cone (Corollary~\ref{cor.irreducible}). Section \ref{sec.main-result} contains the main result (Theorem~\ref{thm.total}) saying that facets of the totally balanced cone correspond to irreducible min-balanced set systems, together with a series of lemmas leading to its proof. Interestingly enough, one of the by-products of our research is a contribution to the question studied by Lohmann et al.\/ in \cite{Lohmann2012minimal}, namely what are the facet-defining inequalities for the cone of exact games. We formulate a conjecture about the form of those facet-defining inequalities in Section \ref{sec.conjecture}. Appendix contains a list of min-balanced systems of sets associated with facet-defining inequalities for small cardinalities of the player set.

\section{Basic notions and results}\label{sec.basic-notions}
Throughout the paper we assume that the reader is familiar with basic concepts and facts from polyhedral geometry; see \cite{Schrijver98,BK92}, for example.

We are going to use standard notions and results from cooperative game theory; see \cite{PelegSudholter07}. For simplicity, we assume that a finite \emph{player set} $N$ contains $n\geq 2$ players which are denoted by the first $n$ letters of English alphabet.
Thus, for example, we write $N=\{a,b,c,d\}$ in case of $4$ players. Subsets of $N$ are called \emph{coalitions}.

Any $n$-dimensional real vector $[x_{i}]_{i\in N}$ is called a {\em payoff allocation} and by ${\dv R}^{N}$ we denote the collection of all such vectors. Given $A\subseteq N$,
the symbol $\chi_{A}\in {\dv R}^{N}$ is the incidence vector of $A$ defined by
\[
(\chi_{A})_{i} ~\coloneqq~
\begin{cases}
\,1 & \mbox{for $i\in A$}, \\
\,0 & \mbox{for $i\in\nmA$}.
\end{cases}
\]
The zero vector in ${\dv R}^{N}$ will be denoted by \0.

Given a non-empty coalition $A\subseteq N$, the symbol $\mathcal{P}(A)$ will
denote the collection 
of all its subsets.
The symbol ${\dv R}^{{\cal P}(A)}$ will be used to denote the collection of
real-valued set functions on subsets of $A$, that is, mappings $\m\colon \mathcal{P}(A)\to {\dv R}$.
Given $\m\in {\dv R}^{{\cal P}(N)}$ and $\emptyset\neq A\subseteq N$, the
restriction of $\m$ to $\mathcal{P}(A)$ will be denoted by $\mrA$; formally,
$\mrA\in {\dv R}^{{\cal P}(A)}$ is defined by $\mrA(S)\coloneqq \m(S)$ for any $S\subseteq A$.

The zero vector in ${\dv R}^{\caP}$ will be denoted by \b0 and the scalar product of two elements
$\theta$ and $\m$ in the linear space ${\dv R}^{\caP}$ by
$\llangle\theta,\m\rrangle\coloneqq \sum_{S\subseteq N} \theta(S)\cdot \m(S)$\,.
Given $A\subseteq N$, the symbol $\delta_{A}$ will denote its set indicator in ${\dv R}^{\caP}$
and $\muA\in {\dv R}^{\caP}$ the indicator of its
supersets:
$$
\delta_{A}(S)\coloneqq
\left\{
\begin{array}{cl}
1 & \mbox{if $S=A$},\\
0 & \mbox{for other $S\subseteq N$},
\end{array}
\right.
\qquad
\muA(S)\coloneqq
\left\{
\begin{array}{cl}
1 & \mbox{if $A\subseteq S$},\\
0 & \mbox{for other $S\subseteq N$}.
\end{array}
\right.
$$
The set functions $\muA$ for $A\subseteq N$ with $|A|\leq 1$ then form a basis
of the linear subspace $L(N)\subseteq {\dv R}^{\caP}$
of {\em modular set functions\/} $m$  satisfying
$$
 \m(C\cup D)+\m(C\cap D)=\m(C)+\m(D), \quad \mbox{for all $C,D\subseteq N$}.
$$
The linear space $L(N)$ has the dimension $1+|N|$.

Given a set ${\cal S}\subseteq {\dv R}^{\caP}$, its {\em dual cone\/}
is $${\cal S}^{*}\coloneqq\{\, \m\in {\dv R}^{\caP}\,:\ \llangle\theta,\m\rrangle\geq 0\quad
\mbox{for any $\theta\in {\cal S}$}\,\}.$$ A well-known elementary fact  is that
${\cal C}\subseteq {\dv R}^{\caP}$ is a non-empty closed convex cone iff ${\cal C}={\cal C}^{**}$, which
happens iff ${\cal C}={\cal S}^{*}$ for some ${\cal S}\subseteq {\dv R}^{\caP}$; see
for example \cite[Consequence~1]{Studeny93}. Thus, if one shows, for a non-empty polyhedral cone
${\cal C}\subseteq {\dv R}^{\caP}$ and ${\cal D}\subseteq {\dv R}^{\caP}$ that ${\cal D}={\cal C}^{*}$ then this implies that ${\cal C}={\cal D}^{*}$ and, moreover, that ${\cal C}$ and ${\cal D}$ are mutually dual polyhedral cones.
Another well-known fact in polyhedral geometry
is that the face-lattices of dual cones are anti-isomorphic; see \cite[Theorem\,7.41]{BK92}.
In particular, if ${\cal C}$ is a pointed cone, which means $-{\cal C}\cap{\cal C}=\{\b0\}$,
then the facets of ${\cal D}$ are in bijection with the extreme rays of ${\cal C}$.

A ({\em transferable-utility coalitional\,}) {\em game} over (a set of players) $N$ is modeled by a real function $\m\colon \mathcal{P}(N)\to {\dv R}$ such that $\m(\emptyset)=0$.
We will use ${\cal G}(N)$ to denote the collection of all such functions; any $\m\in{\cal G}(N)$
will briefly be called a ``game". If $\emptyset\neq A\subseteq N$ then the restriction $\mrA$ to $\mathcal{P}(A)$ is called
a {\em subgame} of the game $m\in {\cal G}(N)$.

The {\em core $C(\m)$ of\/} a game $\m\in{\cal G}(N)$ is the set of all Pareto efficient and coalitionally rational
payoff allocations, that is, formally
$$
C(\m) \coloneqq \{\,[x_{i}]_{i\in N}\in {\dv R}^{N} \,:\ \sum_{i\in N} x_{i}=\m(N) ~\&~
\sum_{i\in S}x_{i} \geq \m(S)\; \text{for all $S\subseteq N$}\}\,.
$$
We say that a game $\m\in{\cal G}(N)$ is
\begin{itemize}
\item {\em balanced} if $C(\m)\neq\emptyset$;
\item {\em totally balanced} if every subgame of $\m$ is balanced;
\item {\em exact} if, for each coalition $S\subseteq N$, there exists a payoff allocation
$[x_{i}]_{i\in N} \in C(\m)$ in the core that is tight for $S$, which means that $\sum_{i\in S}x_{i} = \m(S)$.
\end{itemize}
We introduce the following notation:
\begin{align*}
\mathcal{B}(S) &\coloneqq \{ \m\in {\cal G}(S) :\text{$m$ is balanced}\},\quad \text{for any $\emptyset\neq S\subseteq N$},\\
\mathcal{T}(N) &\coloneqq \{ \m\in {\cal G}(N): \text{$m$ is totally balanced}\}, \\
\mathcal{E}(N) &\coloneqq \{ \m\in {\cal G}(N) : \text{$m$ is exact}\}.
\end{align*}
\noindent
Recall from \cite[Theorem\,1]{KalaiZemel1982} that $\m\in {\cal T}(N)$ iff it has a finite min-representation, which
means there exists a nonempty finite ${\cal X}\subseteq {\dv R}^{N}$ such that
$$
\m(S)~=~\min_{x\in {\cal X}}\, \sum_{i\in S} x_{i}\qquad \mbox{for any $S\subseteq N$}.
$$
It is well-known that $\m\in {\cal E}(N)$ iff it has a min-representation
$\emptyset\neq {\cal X}\subseteq C(\m)$; see \cite[Proposition\,1]{SK2018}, for example. Hence, we get the inclusions $\mathcal{E}(N)\subseteq \mathcal{T}(N)\subseteq \mathcal{B}(N)\subseteq {\dv R}^{{\cal P}(N)}$.

Moreover, all these sets are 
polyhedral cones in the linear space ${\dv R}^{{\cal P}(N)}$. Specifically, the fact that ${\cal B}(N)$ is determined by finitely many linear inequalities is a consequence of classic results by Bondareva  \cite{Bondareva63} and Shapley \cite{Shapley1967On-balanced-set}, recalled in later Lemma~\ref{lem.balanced-cone}. The cone ${\cal T}(N)$ is polyhedral since it follows immediately from the definition that
$$
{\cal T}(N) = \bigcap_{\emptyset\neq S\subseteq N}\{ \m\in {\cal G}(N)\,:\ \mrS\in {\cal B}(S)\, \}.
$$
Finally, ${\cal E}(N)$ is a polyhedral cone by the results contained in \cite{Csoka2011Balancedness} or \cite{Lohmann2012minimal}. Since all the discussed cones are polyhedral, each of them is fully determined by finitely many linear inequalities only. It turns out that the facet-defining inequalities correspond to special set systems, which we define in the next section.

\subsection{Min-balanced systems}
Any subset ${\cal B}$ of ${\cal P}(N)$ is called a {\em set system}.
The union of sets in $\calB$ will be denoted by $\bigcup\calB$ and their intersection by $\bigcap\calB$. We will call $\bigcup \calB$ the \emph{carrier} of $\calB$. Minimality of set systems is always understood in the sense of their inclusion as subsets of ${\cal P}(N)$.

\begin{definition}\label{def.min-balanced}\rm
Let $\calB\subseteq\caP$ be a non-empty set system with a carrier $M\subseteq N$. We say that $\calB$ is \emph{min-balanced} if it is a minimal set system in $\caP$ satisfying the condition:
\begin{equation}
\text{The vector $\chi_{M}$ belongs to the conic hull of
 $\{ \chi_{S}\in {\dv R}^{N}:\ S\in\calB\}$.} \tag{$\dagger$}
\end{equation}
If $\calB\subseteq\caP$ is a min-balanced system whose carrier is $M$, then we briefly say that $\calB$ is \emph{min-balanced on M}.
A min-balanced system ${\cal B}$
with $|\calB|\geq 2$ will be named {\em non-trivial}.
\end{definition}

Clearly, a permutation of players transforms a non-trivial min-balanced system also to a non-trivial
min-balanced system. A catalogue of permutational types of non-trivial min-balanced system on $N$, where $2\leq |N|\leq 4$, can be found in Appendix.

\begin{lemma}\em\label{lem.equiv-min-bal}
A non-empty set system ${\cal B}\subseteq\caP$ is min-balanced
if and only if the following two conditions hold:
\begin{itemize}
\item[(i)] There exist strictly positive coefficients $\lambda_{S}>0$ for $S\in\calB$ such that
$$
\chi_{\bigcup\calB}=\sum_{S\in\calB} \lambda_{S}\cdot\chi_{S}.
$$
\item[(ii)] The incidence vectors $\{ \chi_{S}\in {\dv R}^{N}:\ S\in\calB\}$ are linearly independent.
\end{itemize}
Hence, a non-empty ${\cal B}\subseteq\caP$ is min-balanced iff it is a minimal set system
satisfying (i).
\end{lemma}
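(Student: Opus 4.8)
The plan is to reduce the statement to a single fact about conic representations of $\chi_{M}$, where $M\coloneqq\bigcup\calB$ is the carrier. Throughout I would use one elementary observation: whenever $\chi_{M}=\sum_{S\in\calB}\lambda_{S}\,\chi_{S}$ with all $\lambda_{S}\geq 0$, the support $\calC\coloneqq\{S\in\calB:\lambda_{S}>0\}$ again has carrier $M$. Indeed, for every $i\in M$ the $i$-th coordinate gives $\sum_{S\ni i}\lambda_{S}=(\chi_{M})_{i}=1>0$, so some set of positive weight contains $i$; hence $\bigcup\calC=M$ and $\calC$ still witnesses condition $(\dagger)$. Thus dropping the zero-weight sets from any nonnegative representation never shrinks the carrier.

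For the implication min-balanced $\Rightarrow$ (i),(ii): condition $(\dagger)$ supplies a nonnegative representation of $\chi_{M}$, and by the observation its support $\calC\subseteq\calB$ is a set system with the same carrier $M$ satisfying $(\dagger)$. Minimality of $\calB$ forces $\calC=\calB$, i.e.\ all coefficients are strictly positive, which is (i). For (ii) I would argue contrapositively: a nontrivial relation $\sum_{S\in\calB}\mu_{S}\,\chi_{S}=\0$ can be added to the positive representation, since $\sum_{S\in\calB}(\lambda_{S}+t\mu_{S})\,\chi_{S}=\chi_{M}$ for every $t$. Moving $t$ away from $0$ in whichever direction decreases some coefficient and stopping at the first value $t^{*}$ where a coefficient reaches $0$, one obtains a nonnegative representation of $\chi_{M}$ whose support is a proper subsystem of $\calB$, contradicting minimality. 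Hence the incidence vectors are linearly independent.

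For the converse (i),(ii) $\Rightarrow$ min-balanced: (i) gives $(\dagger)$ at once. To see minimality, suppose a proper subsystem $\calC\subsetneq\calB$ satisfied $\chi_{M}=\sum_{S\in\calC}\nu_{S}\,\chi_{S}$ with $\nu_{S}\geq 0$. Padding $\nu$ with zeros on $\calB\setminus\calC$ and subtracting from the representation in (i) yields $\sum_{S\in\calB}(\lambda_{S}-\nu_{S})\,\chi_{S}=\0$, and the coefficient at any $S_{0}\in\calB\setminus\calC$ equals $\lambda_{S_{0}}>0$, so this relation is nontrivial and contradicts (ii). Therefore no proper subsystem witnesses $(\dagger)$ and $\calB$ is min-balanced. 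The closing sentence is then immediate from the same support observation: a system satisfies (i) precisely when it is the positive support of a representation of the incidence vector of its carrier, so the systems minimal for $(\dagger)$ and the systems minimal for (i) coincide, and both equal the min-balanced systems.

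The step I expect to be the main obstacle is the perturbation used for (ii): one must check that the critical value $t^{*}$ exists and that at $t^{*}$ every coefficient is still nonnegative while at least one vanishes. This needs the two sign cases to be treated separately — moving $t>0$ when some $\mu_{S}<0$, and $t<0$ when instead some $\mu_{S}>0$ — together with the fact that $\lambda$ is strictly positive, so that the admissible range of $t$ is a genuine interval around $0$ whose relevant endpoint is finite. Everything else reduces to the carrier observation and routine manipulation of incidence vectors.
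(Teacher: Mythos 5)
Your proof is correct and follows essentially the same route as the paper's: strict positivity of the coefficients forced by minimality after dropping zero-weight sets, linear independence via the perturbation $\lambda_{S}+t\mu_{S}$ stopped at the first vanishing coefficient, and the converse by padding a competing nonnegative representation with zeros to produce a nontrivial linear relation contradicting (ii). Your explicit carrier-preservation observation and the two sign cases for $t$ merely spell out details the paper leaves implicit (it simply takes a maximal $\varepsilon>0$ with all $\lambda_{S}+\varepsilon\gamma_{S}\geq 0$), so this is the same argument, slightly more carefully packaged.
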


The condition (i) is the {\em balancedness\/} condition from \cite{Shapley1967On-balanced-set}. Therefore, we also say that $\calB$ is {\em balanced on $M$} whenever (i) holds with  $M=\bigcup\calB$. Similar terminology is often used in game-theoretical literature; see, for example, an equivalent concept of $S$-balanced collection \cite[Section\,2-3]{SS2016nucleolus}.
The last claim in Lemma~\ref{lem.equiv-min-bal} motivated our terminology, which follows the usual game-theoretical terminology.
The condition (ii) is equivalent to the minimality and implies
the uniqueness of the coefficients $\lambda_{S}$ in (i).

\begin{proof}
Let $M$ denote the carrier of $\calB$.
To show the necessity of (i) write $\chi_{\bigcup\calB}=\chi_{M}=\sum_{S\in\calB} \lambda_{S}\cdot\chi_{S}$ with
$\lambda_{S}\geq 0$ by \mbox{($\dagger$)}. If $\lambda_{S}$ vanishes for some $S$, then take $\calB^{\prime}=\{T\in\calB\,:\,\lambda_{T}>0\}$
and get a contradictory conclusion that $\calB^{\prime}$ is a strict subsystem of $\calB$ satisfying \mbox{($\dagger$)}.
The necessity of (ii) can then be shown by contradiction: otherwise a non-vanishing system of real coefficients $\{\gamma_{S}\,:\,S\in\calB\}$ exists such that $\sum_{S\in\calB} \gamma_{S}\cdot\chi_{S}=\0\in {\dv R}^{N}$. For any $\varepsilon\geq 0$ put $\lambda^{\varepsilon}_{S}\coloneqq\lambda_{S}+\varepsilon\cdot\gamma_{S}$
and observe that $\chi_{M}=\sum_{S\in\calB} \lambda^{\varepsilon}_{S}\cdot\chi_{S}$.
Since all $\lambda_{S}$'s are strictly positive by (i), there exists a maximal $\varepsilon>0$  such that all $\lambda^{\varepsilon}_{S}$ are non-negative. Put $\calB^{\prime}=\{T\in\calB\,:\,\lambda^{\varepsilon}_{T}>0\}$ and derive the contradiction
analogously.

Conversely, if both (i) and (ii) hold, then
$\chi_{M}=\chi_{\bigcup\calB}=\sum_{S\in\calB} \lambda_{S}\cdot\chi_{S}$ with $\lambda_{S}>0$.
Assume for a contradiction that ${\cal C}\subset \calB$ exists such that $\chi_{M}=\sum_{S\in{\cal C}} \nu_{S}\cdot\chi_{S}$ with $\nu_{S}\geq 0$, $S\in{\cal C}$. Put $\nu_{S}=0$ for $S\in \calB\setminus{\cal C}$ and
note $\0=\sum_{S\in\calB} (\lambda_{S}-\nu_{S})\cdot\chi_{S}$, which contradicts (ii).

The last claim in Lemma~\ref{lem.equiv-min-bal} 
is a direct consequence of the proven equivalence.
\qed \end{proof}

Note that the balancedness condition (i) in Lemma~\ref{lem.equiv-min-bal}
cannot be weakened to \mbox{($\dagger$)}. Indeed, ($\dagger$) with $M=\bigcup\calB$ and (ii)
do not imply (i) as the following example shows.
Put $N=\{a,b,c\}$ and $\calB=\{\, \{a\}, \{b\},\{b,c\}\,\}$. Then
$\chi_{\bigcup\calB}=1\cdot\chi_{\{a\}}+0\cdot\chi_{\{b\}}+1\cdot\chi_{\{b,c\}}$, but (i) is not true.
We now collect  basic facts about non-trivial min-balanced systems.

\begin{lemma}\label{lem.min-balanced}\rm
Let $\calB\subseteq\caP$ be a non-trivial min-balanced system. Then the following conditions hold:
\begin{itemize}
\item $\emptyset,\bigcup\calB\not\in\calB$ and $|\bigcup\calB|\geq 2$,
\item $\bigcap\calB=\emptyset$,
\item there are at most $|\bigcup\calB|$ sets in $\calB$.
\end{itemize}
\end{lemma}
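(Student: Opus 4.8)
The plan is to run everything through Lemma~\ref{lem.equiv-min-bal}, which tells us that a non-trivial min-balanced $\calB$ admits coefficients $\lambda_{S}>0$, $S\in\calB$, with $\chi_{M}=\sum_{S\in\calB}\lambda_{S}\,\chi_{S}$ for $M\coloneqq\bigcup\calB$ (condition (i)), that the vectors $\{\chi_{S}:S\in\calB\}$ are linearly independent (condition (ii)), and that $\calB$ is a minimal set system satisfying (i). For the first item I would argue as follows. If $\emptyset\in\calB$ then $\chi_{\emptyset}=\0$ would belong to a linearly independent family, which is impossible, so (ii) gives $\emptyset\notin\calB$. Next, the singleton $\{M\}$ already satisfies (i), since $\chi_{M}=1\cdot\chi_{M}$; hence if $M\in\calB$, then because $|\calB|\geq 2$ the family $\{M\}$ would be a proper subsystem of $\calB$ satisfying (i), contradicting the minimality assertion of Lemma~\ref{lem.equiv-min-bal}. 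Thus every $S\in\calB$ is a nonempty subset of $M$ with $S\neq M$, and such a set can exist only when $|M|\geq 2$, which yields the remaining claim of the first item.

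For the second item I would suppose, for contradiction, that some $i\in\bigcap\calB$ exists and read off the identity $\chi_{M}=\sum_{S\in\calB}\lambda_{S}\,\chi_{S}$ coordinate by coordinate. At coordinate $i$ every term has $\chi_{S}(i)=1$ and $\chi_{M}(i)=1$, so the identity normalizes the weights to $\sum_{S\in\calB}\lambda_{S}=1$. Evaluating it at an arbitrary $j\in M$ gives $1=\sum_{S\,:\,j\in S}\lambda_{S}$, whence $\sum_{S\,:\,j\notin S}\lambda_{S}=0$; since all coefficients are strictly positive this forces every $S\in\calB$ to contain $j$. As $j$ ranges over all of $M$, each $S$ must equal $M$, contradicting $|\calB|\geq 2$ (equivalently $M\notin\calB$). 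I expect this coordinatewise bookkeeping --- specifically the passage from a common element to the normalization $\sum_{S}\lambda_{S}=1$ and then to the conclusion that every coordinate of $M$ lies in every member of $\calB$ --- to be the only genuinely delicate step of the lemma.

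The third item is then immediate from (ii): since each $S\in\calB$ is contained in $M$, the incidence vectors $\chi_{S}$ are supported on $M$ and therefore lie in the coordinate subspace $\{x\in{\dv R}^{N}:x_{j}=0 \text{ for } j\notin M\}$, whose dimension is $|M|$. Linear independence bounds the number of these vectors by the dimension of that subspace, so $|\calB|\leq|M|=|\bigcup\calB|$, as required.
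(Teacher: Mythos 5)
Your proof is correct and follows essentially the same route as the paper's: all three items are funneled through Lemma~\ref{lem.equiv-min-bal}, with the same coordinatewise computation (normalizing $\sum_{S\in\calB}\lambda_{S}=1$ from a common element and then killing $\sum_{S:\,j\notin S}\lambda_{S}$) for $\bigcap\calB=\emptyset$ and the same dimension count for $|\calB|\leq|\bigcup\calB|$. The only variation is in proving $\bigcup\calB\notin\calB$, where you invoke the minimality assertion (the proper subsystem $\{\bigcup\calB\}$ itself satisfies (i)), while the paper instead exhibits two distinct representations of $\chi_{\bigcup\calB}$ and contradicts the linear independence condition (ii); both arguments are sound.
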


\begin{proof}
Since $\calB$ is non-trivial, it is necessarily non-empty, so Lemma~\ref{lem.equiv-min-bal} applies. Then Lemma~\ref{lem.equiv-min-bal}(ii) implies that $\emptyset\notin\calB$. As $\calB$ contains at least two non-empty sets, necessarily $|\bigcup\calB|\geq 2$. Assume for contradiction that $\bigcup\calB\in\calB$. Then the non-triviality assumption $|\calB|\geq 2$ and (i) imply together that $\chi_{\bigcup\calB}$ can be expressed in two different ways as a linear combination of $\{ \chi_{S}\in {\dv R}^{N}:\ S\in\calB\}$,
which contradicts (ii). Hence, $\bigcup\calB\not\in\calB$.

By non-triviality we have $\bigcup\calB\setminus \bigcap\calB\neq\emptyset$. Assume for contradiction that $\bigcap\calB\neq \emptyset$. Consider $i\in\bigcap\calB$, $j\in \bigcup\calB\setminus \bigcap\calB$ and choose $T\in\calB$ with $j\not\in T$. Then $i\in T$ and one can write using the formula in Lemma~\ref{lem.equiv-min-bal}(i),
$$
\sum_{S\in\calB} \lambda_{S}=\sum_{S\in\calB:i\in S} \lambda_{S} =
(\chi_{\bigcup\calB})_{i}= 1=(\chi_{\bigcup\calB})_{j}=\sum_{S\in\calB:j\in S} \lambda_{S}.
$$
This implies $\sum_{S\in\calB:j\not\in S} \lambda_{S}=0$, which gives a contradictory conclusion that simultaneously $\lambda_{T}>0$ and $\lambda_{T}=0$. Hence, necessarily $\bigcap\calB=\emptyset$.

Consider $k=|\bigcup\calB|$. Since the linear space ${\dv R}^k$ is $k$-dimensional, there are at most $k$ linearly independent vectors in it. Thus, by Lemma~\ref{lem.equiv-min-bal}(ii), $\calB$ can have at most $k$ sets.
\qed \end{proof}

Every set-theoretic partition of a non-empty subset of $N$ into non-empty blocks is a min-balanced system. A  more general example, which is not a partition, is as follows.

\begin{example} \label{exa.minbal}
Let $N\coloneqq \{a,b,c,d,e\}$. Put $\calB\coloneqq \{\{a,b\},\{a,c\},\{a,d\},\{b,c,d\}\}$
and define
$$
\lambda_S \coloneqq
\begin{cases}
\,\frac{1}{3} & \mbox{for $S\in \{\{a,b\},\{a,c\},\{a,d\}\}$}, \\
\,\frac{2}{3} & \mbox{for $S=\{b,c,d\}$}.
\end{cases}
$$
One can check using Lemma~\ref{lem.equiv-min-bal} that $\calB$ is min-balanced and its carrier is $\{a,b,c,d\}$.
\end{example}

\section{Our framework for dealing with linear inequalities}\label{sec.our-conventions}
The cones of games introduced in Section \ref{sec.basic-notions} are not full-dimensional in
${\dv R}^{\caP}$. This implies that their facet-defining inequalities have several equivalent writings in this linear space. We are going to introduce useful conventions in order to establish a (one-to-one) correspondence between
(facet-defining) inequalities and certain set systems.

\subsection{Taking the empty set into consideration}\label{ssec.zero-allow}
Traditional game-theoretical literature deals with set functions
$\m:\caP\to {\dv R}$ satisfying $\m(\emptyset)=0$, which typically leads to
restricting considerations to the linear space ${\dv R}^{\caP\setminus\{\emptyset\}}$.
This restriction, however, causes later formal complications and hides some important symmetries. 
To reveal those symmetries we intentionally consider the entire space ${\dv R}^{\caP}$ and extend our cones of games
so that the resulting cones contain constant set functions.
\smallskip

Specifically, for every $\m\in {\dv R}^{\caP}$, a shifted set function $\mg\in {\dv R}^{\caP}$ defined by
\begin{equation}
\mg (S) \coloneqq \m(S)-\m(\emptyset)\qquad \mbox{for every $S\subseteq N$}
\label{eq.zero-projection}
\end{equation}
is a game over $N$ and we define:
\begin{align*}
B(N) &\coloneqq \{\, \m\in {\dv R}^{\caP}\ :\ \mg\in {\cal B}(N)\, \},\\
T(N) &\coloneqq \{\, \m\in {\dv R}^{\caP}\ :\ \mg\in {\cal T}(N)\, \},\\
E(N) &\coloneqq \{\, \m\in {\dv R}^{\caP}\ :\ \mg\in {\cal E}(N)\, \}.
\end{align*}
All these cones are full-dimensional in ${\dv R}^{\caP}$.
A few basic observations about these cones are below.
Recall that by a {\em tight valid\/} inequality for a cone is meant such a valid inequality for
its vectors which holds with equality for at least one vector in the cone.
Note that all the considered cones have the dimension at least 1. Therefore, their facets are non-empty,
which implies that every facet-defining inequality for them is a tight valid inequality.

\begin{lemma}\label{lem.zero-allow}\rm
The space $L(N)$ of modular set functions is the (shared) linearity space for cones $B(N)$, $T(N)$ and $E(N)$. Analogously, the shared linearity space for cones ${\cal B}(N)$, ${\cal T}(N)$ and ${\cal E}(N)$ is the space
of modular games \[{\cal L}(N)\coloneqq L(N)\cap{\cal G}(N).\]
Tight valid inequalities for each of these six cones 
have the following form in  ${\dv R}^{\caP}$:
\begin{equation}
\underbrace{\sum_{S\subseteq N} \alpha (S)\cdot \m(S)}_{=\llangle \alpha,\m\rrangle}\geq 0\qquad
\mbox{required for $\m\in {\dv R}^{\caP}$}\,,
\label{eq.inequal}
\end{equation}
where the coefficient vector $\alpha\in {\dv R}^{\caP}$ satisfies
$\sum_{S\subseteq N:\, i\in S} \alpha (S)=0$ for any $i\in N$. Moreover, the coefficient vectors of tight valid inequalities for cones $B(N)$, $T(N)$ and $E(N)$ even
satisfy
\begin{equation}
\sum_{S\subseteq N} \alpha (S)=\llangle \alpha,\muo\rrangle=0,\quad
\sum_{S\subseteq N:\, i\in S} \alpha (S)=\llangle \alpha,\mui\rrangle=0~~
\mbox{for any $i\in N$.}
\label{eq.o-stan}
\end{equation}
\end{lemma}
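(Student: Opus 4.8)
The plan is to handle the two assertions—identification of the shared linearity spaces and the resulting shape of tight valid inequalities—separately, reducing everything to one genuinely game-theoretic computation. Throughout I use that the linearity space of a convex cone $\calC$ is $\calC\cap(-\calC)$, and I exploit the inclusions ${\cal E}(N)\subseteq{\cal T}(N)\subseteq{\cal B}(N)$ recorded earlier.

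First I would pin down the linearity space of the three game cones. The inclusion ${\cal L}(N)\subseteq{\cal B}(N)\cap(-{\cal B}(N))$ is immediate: a modular game $\m(S)=\sum_{i\in S}c_i$ has the single-point core $\{[c_i]_{i\in N}\}$, which is tight for every coalition, so $\m$ is exact, hence totally balanced and balanced, and the same applies to $-\m$; by the inclusions above this puts ${\cal L}(N)$ inside the linearity space of each of ${\cal E}(N),{\cal T}(N),{\cal B}(N)$. It therefore suffices to prove the reverse inclusion ${\cal B}(N)\cap(-{\cal B}(N))\subseteq{\cal L}(N)$, since then all three linearity spaces are squeezed to ${\cal L}(N)$. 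This reverse inclusion is the heart of the argument and the step I expect to be the main obstacle: given $\m$ with both $\m$ and $-\m$ balanced, pick $x\in C(\m)$ and $y\in C(-\m)$ and set $z\coloneqq-y$; then $\sum_{i\in S}x_i\geq\m(S)\geq\sum_{i\in S}z_i$ for every $S\subseteq N$, with equality at $S=N$. Writing $w\coloneqq x-z$ gives $\sum_{i\in S}w_i\geq 0$ for all $S$ together with $\sum_{i\in N}w_i=0$; testing this on $S=\{j\}$ and on $S=N\setminus\{j\}$ forces $w_j=0$ for every $j$, so $x=z$, whence $\sum_{i\in S}x_i=\m(S)$ for all $S$ and $\m$ is modular.

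Next I would transfer these results to the full-dimensional cones through the linear shift map $\Phi\colon\m\mapsto\mg=\m-\m(\emptyset)\cdot\muo$, for which $B(N)=\Phi^{-1}({\cal B}(N))$ and analogously for $T(N),E(N)$. Since $\Phi$ is linear, $\Phi^{-1}$ commutes with intersection and with negation, so the linearity space of $\Phi^{-1}({\cal B}(N))$ equals $\Phi^{-1}\!\big({\cal B}(N)\cap(-{\cal B}(N))\big)=\Phi^{-1}({\cal L}(N))$. Because $\muo\in L(N)$ and $\mg$ always vanishes at $\emptyset$, one checks $\mg\in{\cal L}(N)\iff\mg\in L(N)\iff\m\in L(N)$, which yields $\Phi^{-1}({\cal L}(N))=L(N)$; hence $L(N)$ is the shared linearity space of $B(N),T(N),E(N)$.

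Finally I would read off the form of the inequalities. For any cone containing $\b0$, a valid inequality $\llangle\alpha,\m\rrangle\geq c$ forces $c\leq 0$, and tightness combined with positive scaling of the contact vector rules out $c<0$, so a tight valid inequality is homogeneous, $\llangle\alpha,\m\rrangle\geq 0$. Moreover, every valid inequality annihilates the linearity space: if $\ell$ lies in it then $\pm\ell\in\calC$, whence $\llangle\alpha,\ell\rrangle=0$. Applying this with $\ell=\mui$, which belongs to ${\cal L}(N)$ and therefore to the linearity space of all six cones, gives $\sum_{S\subseteq N:\,i\in S}\alpha(S)=\llangle\alpha,\mui\rrangle=0$ for every $i\in N$, which is \eqref{eq.inequal}. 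For $B(N),T(N),E(N)$ the linearity space is the larger space $L(N)$, which also contains $\muo$; the same orthogonality with $\ell=\muo$ then yields the extra identity $\sum_{S\subseteq N}\alpha(S)=\llangle\alpha,\muo\rrangle=0$, completing \eqref{eq.o-stan}.
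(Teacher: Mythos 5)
Your proof is correct, and while it follows the same overall skeleton as the paper (easy inclusion via exactness of modular games, hard inclusion for the balanced cone, squeeze through ${\cal E}\subseteq{\cal T}\subseteq{\cal B}$, then homogeneity by cone-scaling and orthogonality to the linearity space), the two key steps are executed by genuinely different arguments. For the hard inclusion, the paper works in the extended cone $B(N)$: it first subtracts a modular function so that the resulting $\r$ vanishes on all $S$ with $|S|\leq 1$, and then applies the derived inequality $\mg(N)\geq \mg(T)+\sum_{i\in N\setminus T}\mg(\{i\})$ to both $\r$ and $-\r$ to force $\r=\b0$. You instead argue directly in the game cone ${\cal B}(N)$ with a two-core-element sandwich: taking $x\in C(\m)$ and $y\in C(-\m)$, the estimate $\sum_{i\in S}x_i\geq \m(S)\geq -\sum_{i\in S}y_i$ with equality at $S=N$ forces $x=-y$ and makes every core inequality tight, so $\m$ is additive with no normalization step needed; as a by-product your argument shows such games are even exact with a singleton core. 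Second, where the paper handles the extended and game cones by parallel ``analogous'' computations, you prove the game-cone statement once and transfer it by the formal observation that $B(N)=\Phi^{-1}({\cal B}(N))$ for the linear shift $\Phi$, so that the linearity space of $B(N)$ is $\Phi^{-1}\bigl({\cal B}(N)\cap(-{\cal B}(N))\bigr)=\Phi^{-1}({\cal L}(N))=L(N)$; this eliminates the duplication at the cost of checking that preimages under a linear map commute with negation and intersection. The final part of your argument (tightness plus scaling gives homogeneity, and annihilation of $\mui$, respectively $\muo$, gives the coefficient conditions) coincides with the paper's.
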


\begin{proof}
The first observation is that, for every $i\in N$, one has $\mui,-\mui\in {\cal E}(N)$,
with singleton cores $\{\chi_{\{i\}}\}$ and $\{-\chi_{\{i\}}\}$. Thus, $\mui\in -{\cal E}(N)\cap{\cal E}(N)$ for any $i\in N$, which implies ${\cal L}(N)\subseteq -{\cal E}(N)\cap{\cal E}(N)$. Since constant set functions
$\pm\muo\in {\dv R}^{\caP}$ belong to $E(N)$ one analogously observes $L(N)\subseteq -E(N)\cap E(N)$.

The second observation is that the only $\r\in -B(N)\cap B(N)$ satisfying $\r(S)=0$ for $S\subseteq N$, $|S|\leq 1$,
is the zero game \b0. To this end realize that every balanced game $\mg\in {\cal B}(N)$ satisfies
$\mg(N)\geq \mg(T)+\sum_{i\in \nmT}\mg(\{i\})$ for any $\emptyset\neq T\subset N$. Indeed,
having $x=[x_{i}]_{i\in N}$ in the core $C(\mg)$, one can write
\begin{align*}
\mg(N) & =\sum_{i\in N} x_{i} = \sum_{i\in T} x_{i}+  \sum_{i\in \nmT} x_{i} ~\geq~
\mg(T)+  \sum_{i\in \nmT} x_{i} \\ & \geq~ \mg(T)+  \sum_{i\in \nmT} \mg(\{i\}).
\end{align*}
Thus, for any $\r$ in $-B(N)\cap B(N)$ with $\r(S)=0$ for $S\subseteq N$, $|S|\leq 1$,
both $\r$ and $-\r$ can be taken in place of $\mg$, which results in
$\r(N)=\r(T)+\sum_{i\in \nmT} \r(\{i\})=\r(T)$. Taking in place of $T$ a singleton
gives $\r(N)=0$; then taking general $\emptyset\neq T\subset N$ gives the rest.

This allows one to show $-B(N)\cap B(N)\subseteq L(N)$. Indeed, given
$\m\in -B(N)\cap B(N)$, put $\r\coloneqq  \m-\m(\emptyset)\cdot\muo +\sum_{i\in N}
[\m(\emptyset)-\m(\{i\})]\cdot \mui$, realize that $\r$ satisfies the
above conditions and derive $\r=\b0$ to observe that $\m\in L(N)$. An analogous consideration
with games leads to $-{\cal B}(N)\cap {\cal B}(N)\subseteq {\cal L}(N)$.

One has ${\cal E}(N)\subseteq {\cal T}(N)\subseteq {\cal B}(N)$, the same holds for
their multiples by $(-1)$, giving
$$
{\cal L}(N) \subseteq -{\cal E}(N)\cap {\cal E}(N)\subseteq -{\cal T}(N)\cap {\cal T}(N)\subseteq
-{\cal B}(N)\cap {\cal B}(N)\subseteq {\cal L}(N)\,;
$$
analogously $E(N)\subseteq T(N)\subseteq B(N)$ shows that $L(N)$ is the shared linear space
for the extended cones $E(N)$, $T(N)$ and $B(N)$.

Let ${\cal K}$ be any of the above discussed cones in ${\dv R}^{\caP}$.
Any linear inequality for $\m\in {\dv R}^{\caP}$ can be re-written in the form
$\llangle \alpha,\m\rrangle\geq k$ with $\alpha\in {\dv R}^{\caP}$ and $k\in {\dv R}$; otherwise we multiply
the inequality by $(-1)$. Assuming it is a tight valid inequality for ${\cal K}$ there exists $\m_{0}\in {\cal K}$
satisfying $\llangle \alpha,\m_{0}\rrangle =k$.
Because ${\cal K}$ is a cone, for every $\varepsilon>0$, one has
$\varepsilon\cdot\m_{0}\in {\cal K}$ and
$$
\varepsilon\cdot k=\varepsilon\cdot \llangle \alpha,\m_{0}\rrangle =\llangle \alpha,\varepsilon\cdot\m_{0}\rrangle\geq k
\quad\Rightarrow\quad (\varepsilon -1)\cdot k\geq 0\,,
$$
which is possible for any $\varepsilon>0$ only in case $k=0$. Thus, the inequality has the form \eqref{eq.inequal}.

Any inequality of the form \eqref{eq.inequal} which is valid for vectors $\m\in {\cal K}$ must hold with
equality for vectors in its linearity space. This implies the rest of Lemma~\ref{lem.zero-allow}.
\qed \end{proof}

A coefficient vector $\alpha\in {\dv R}^{\caP}$ satisfying \eqref{eq.o-stan}
will be called {\em o-standardized}, where $o$ stands for ``orthogonal".
Indeed, \eqref{eq.o-stan} means that $\alpha$ is in the orthogonal complement
of $L(N)$.

In the next lemma we show that the task to characterize facets of a cone of games is equivalent to the task of describing facets of the associated extended cone.

\begin{lemma}\label{lem.zero-allow2}\rm
The value of the coefficient $\alpha(\emptyset)$ in \eqref{eq.inequal}
does not influence the fact whether \eqref{eq.inequal} is facet-defining for $\m\in {\cal T}(N)$ or not.
Specifically, if \eqref{eq.inequal} is facet-defining inequality for $\m\in {\cal T}(N)$, then re-defining
the value of the coefficient $\alpha(\emptyset)$ by
$$
\alpha(\emptyset)~\coloneqq~ -\sum_{\emptyset\neq S\subseteq N} \alpha(S)
$$
yields a facet-defining inequality for both $\m\in {\cal T}(N)$ and $\m\in T(N)$. Conversely, every facet-defining inequality for\, $T(N)$ is facet-defining for ${\cal T}(N)$.
The same relation holds for the pair of cones ${\cal B}(N)$ and $B(N)$ and also for the pair of cones
${\cal E}(N)$ and $E(N)$.
\end{lemma}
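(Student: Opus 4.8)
The plan is to exploit the fact that each extended cone is a \emph{cylinder} over the corresponding cone of games, the single extra direction being the constant set function $\muo$. I focus on the pair $\mathcal{T}(N),T(N)$; the arguments for the pairs $\mathcal{B}(N),B(N)$ and $\mathcal{E}(N),E(N)$ are word-for-word identical, since they use only that $\muo\in L(N)$ is a shared lineality direction of all three extended cones and that the normalization \eqref{eq.o-stan} holds for each of them by Lemma~\ref{lem.zero-allow}.

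First I would dispose of the claim that $\alpha(\emptyset)$ is irrelevant for $\mathcal{T}(N)$. Every $\m\in\mathcal{T}(N)$ is a game, so $\m(\emptyset)=0$ and the summand $\alpha(\emptyset)\cdot\m(\emptyset)$ drops out of $\llangle\alpha,\m\rrangle$. Hence both the validity of \eqref{eq.inequal} on $\mathcal{T}(N)$ and the induced face $\{\m\in\mathcal{T}(N):\llangle\alpha,\m\rrangle=0\}$ are unaffected by the value of $\alpha(\emptyset)$; as being facet-defining is a property of that face, the first assertion is immediate. In particular I may freely re-set $\alpha(\emptyset):=-\sum_{\emptyset\neq S\subseteq N}\alpha(S)$, which is precisely the normalization making $\llangle\alpha,\muo\rrangle=\sum_{S\subseteq N}\alpha(S)=0$.

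The core step is the dimension bookkeeping. I would introduce the linear projection $\pi\colon{\dv R}^{\caP}\to\mathcal{G}(N)$, $\pi(\m)=\mg$, which restricts to the identity on $\mathcal{G}(N)$ and has kernel ${\dv R}\cdot\muo$; by definition $T(N)=\pi^{-1}(\mathcal{T}(N))$, and since $\pm\muo\in T(N)$ one gets $T(N)=\mathcal{T}(N)+{\dv R}\cdot\muo$ with $\dim T(N)=\dim\mathcal{T}(N)+1$ (because $\muo\notin\mathcal{G}(N)$ is transverse to $\mathrm{span}\,\mathcal{T}(N)$). Now for any o-standardized $\alpha$ one has $\llangle\alpha,\muo\rrangle=0$, hence $\llangle\alpha,\m\rrangle=\llangle\alpha,\mg\rrangle$ for every $\m$. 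Consequently $\alpha$ is valid for $T(N)$ iff it is valid for $\mathcal{T}(N)$, and the face it cuts out of $T(N)$ equals $\pi^{-1}(F_\alpha)=F_\alpha+{\dv R}\cdot\muo$, where $F_\alpha$ denotes the face it cuts out of $\mathcal{T}(N)$; this preimage has dimension $\dim F_\alpha+1$. Since a face of a polyhedral cone is a facet exactly when it has codimension $1$, I conclude that $\alpha$ is facet-defining for $T(N)$ iff $\dim F_\alpha=\dim\mathcal{T}(N)-1$, i.e.\ iff $\alpha$ is facet-defining for $\mathcal{T}(N)$. For the forward implication I combine this equivalence with the second paragraph (re-standardize $\alpha(\emptyset)$ first); for the converse I use that a facet-defining inequality for $T(N)$ is tight valid and therefore already o-standardized by Lemma~\ref{lem.zero-allow}, so $\llangle\alpha,\muo\rrangle=0$ holds automatically and the same equivalence applies.

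The main obstacle I anticipate is the careful justification of the face/dimension correspondence: I must verify that $\pi^{-1}$ really sends each proper face of $\mathcal{T}(N)$ to a face of $T(N)$ of exactly one higher dimension and produces no spurious faces, which amounts to confirming that $T(N)$ is genuinely the cylinder $\mathcal{T}(N)+{\dv R}\cdot\muo$ and that $\muo$ is transverse to the hyperplane $\mathcal{G}(N)$. Everything else is routine once the normalization $\llangle\alpha,\muo\rrangle=0$ is in force, which for the extended cones is furnished for free by \eqref{eq.o-stan}.
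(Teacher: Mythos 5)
Your proof is correct, but it takes a genuinely different route from the paper's. The paper argues \emph{dually}: it decomposes ${\cal T}(N)={\cal L}(N)\oplus {\cal T}_{\ell}(N)$ and $T(N)=L(N)\oplus {\cal T}_{\ell}(N)$, introduces the pointed cone ${\dv A}_{T}$ of o-standardized vectors valid on ${\cal T}_{\ell}(N)$, identifies it as the dual cone of $T(N)$ (and ${\cal I}(N)\oplus {\dv A}_{T}$ as the dual of ${\cal T}(N)$), and then reads off the facets of either cone as the (co-atoms corresponding to) extreme rays of ${\dv A}_{T}$, via the anti-isomorphism of face lattices of mutually dual cones. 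You instead work \emph{primally}: you use the cylinder structure $T(N)={\cal T}(N)+{\dv R}\cdot\muo$, observe that for any $\alpha$ with $\llangle\alpha,\muo\rrangle=0$ one has $\llangle\alpha,\m\rrangle=\llangle\alpha,\mg\rrangle$, hence validity transfers and the face of $T(N)$ cut out by $\alpha$ is exactly $F_{\alpha}+{\dv R}\cdot\muo$ of dimension $\dim F_{\alpha}+1$, after which facets match facets by codimension counting. The ``obstacle'' you anticipate is in fact no obstacle: since $\mathrm{span}\, F_{\alpha}\subseteq {\cal G}(N)$ while $\muo(\emptyset)=1$, the sum is direct and the dimension bookkeeping is immediate; you never need a full face-lattice correspondence, only the faces cut out by the inequalities at hand, and for the converse direction the needed normalization $\sum_{S\subseteq N}\alpha(S)=0$ is indeed supplied by Lemma~\ref{lem.zero-allow} because facet-defining inequalities are tight valid. (In fact your argument uses only this single condition, not the full o-standardization \eqref{eq.o-stan}.) What each approach buys: yours is more elementary and self-contained; the paper's dual-cone machinery (pointedness of ${\dv A}_{T}$, facets corresponding to extreme rays) is not wasted effort, since it is reused nearly verbatim in the proof of Lemma~\ref{lem.facet-conjugate} and again, in spirit, in the duality arguments leading to Theorem~\ref{thm.total}.
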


\begin{proof}
First, let us discuss the question of validity of \eqref{eq.inequal} for games.
Since every $\m\in {\cal T}(N)$ satisfies $\m(\emptyset)=0$, we get
$\sum_{S\subseteq N} \alpha(S)\cdot\m(S)=\sum_{\emptyset \neq S\subseteq N} \alpha(S)\cdot\m(S)$. Thus, the value $\alpha(\emptyset)$ is irrelevant for
the validity \eqref{eq.inequal} for $\m\in {\cal T}(N)$ and, also, it does not influence
what is the face of\, ${\cal T}(N)$ specified by \eqref{eq.inequal}. Therefore, the
fact whether \eqref{eq.inequal} is facet-defining for ${\cal T}(N)$ is not influenced by
the value of $\alpha(\emptyset)$.
Thus, 
it remains to show, for any $\alpha\in {\dv R}^{\caP}$
satisfying $\alpha(\emptyset)=-\sum_{\emptyset\neq S\subseteq N} \alpha(S)$, that
\eqref{eq.inequal} is facet-defining for ${\cal T}(N)$ iff it is facet-defining for $T(N)$.

First, observe for any $\alpha\in {\dv R}^{\caP}$
satisfying $\alpha(\emptyset)=-\sum_{\emptyset\neq S\subseteq N} \alpha(S)$,
that \eqref{eq.inequal} is valid for ${\cal T}(N)$ iff is valid for $T(N)$.
By Lemma~\ref{lem.zero-allow}, its validity either for ${\cal T}(N)$ or for $T(N)$
implies that $\alpha$ is o-standardized, that is, satisfies \eqref{eq.o-stan}.
We introduce
\begin{align*}
{\cal T}_{\ell}(N) &\coloneqq
\{\, \m\in {\cal T}(N)\,:\ \m(\{i\})=0\quad \mbox{for any $i\in N$}\,\}\\
&=
\{\, \m\in T(N)\,:\ \m(S)=0\quad \mbox{for $S\subseteq N$, $|S|\leq 1$}\,\}
\end{align*}
and realize that both ${\cal T}(N)={\cal L}(N)\oplus {\cal T}_{\ell}(N)$ and
$T(N)=L(N)\oplus {\cal T}_{\ell}(N)$, where the symbol $\oplus$ denotes the direct
sum of cones. The former fact implies, for o-standardized
$\alpha\in {\dv R}^{\caP}$, that \eqref{eq.inequal} is valid for ${\cal T}(N)$
iff it is valid for $\m\in {\cal T}_{\ell}(N)$; the latter fact implies the same
for the cone $T(N)$.

To see why the claim extends to facet-defining inequalities realize that the set
$$
{\dv A}_{T} = \{\, \alpha\in {\dv R}^{\caP}\,:\
\mbox{$\alpha$ satisfies \eqref{eq.o-stan} ~and} ~~
\llangle \alpha,\m\rrangle\geq 0 ~~ \mbox{for $\m\in {\cal T}_{\ell}(N)$}\,\}
$$
is a pointed convex cone. To this end realize that, for every $A\subseteq N$, $|A|\geq 2$,
one has $\muA\in {\cal E}(N)$, with the core being the convex hull
of $\{\chi_{\{i\}}: i\in A\}$. Thus, by ${\cal E}(N)\subseteq {\cal T}(N)$, one has
$\muA\in {\cal T}(N)$, which gives $\muA\in {\cal T}_{\ell}(N)$.
Hence, any $\alpha\in {\dv A}_{T}$ satisfies $\llangle\alpha,\muA\rrangle\geq 0$ for all $A\subseteq N$, $|A|\geq 2$.
In particular, $\alpha\in -{\dv A}_{T}\cap {\dv A}_{T}$ must satisfy those inequalities with equality,
which allows one to deduce $\alpha=\b0$.

Owing to $T(N)=L(N)\oplus {\cal T}_{\ell}(N)$, the set ${\dv A}_{T}$ is the dual cone to $T(N)$.
This implies that $T(N)$ and ${\dv A}_{T}$ are polyhedral cones which are dual each other (see Section~\ref{sec.basic-notions}). Because ${\dv A}_{T}$ is pointed, it
implies that facets of\, $T(N)$ correspond to extreme rays of ${\dv A}_{T}$.

Analogously, ${\cal T}(N)={\cal L}(N)\oplus {\cal T}_{\ell}(N)$ allows one to observe that the
dual cone to ${\cal T}(N)$ has the form
${\cal I}(N)\oplus {\dv A}_{T}$, where ${\cal I}(N)$ denotes one-dimensional space
of functions $\iota\in {\dv R}^{\caP}$ such that $\iota (S)=0$ for $\emptyset\neq S\subseteq N$.
This similarly implies that facets of ${\cal T}(N)$ correspond to co-atoms of
the face-lattice of\, ${\cal I}(N)\oplus {\dv A}_{T}$, and these have the form ${\cal I}(N)\oplus R$, where
$R$ is an extreme ray of ${\dv A}_{T}$. Thus, in this context, facet-defining inequalities for either ${\cal T}(N)$
or $T(N)$ are precisely those which are given by $\alpha$'s  generating extreme rays of
${\dv A}_{T}$.

The arguments in the case of balanced cones and in the case of exact cones are analogous, so, they are left to the reader.
\qed \end{proof}

\subsection{Reflection and conjugate inequalities}\label{ssec.reflection}
Two of the extended cones are closed under a special linear self-transformation of ${\dv R}^{\caP}$.

\begin{definition}\label{def.reflection}\rm
By a {\em reflection\/} of $\m\in {\dv R}^{\caP}$ we mean $\mre\in {\dv R}^{\caP}$ given by
$$
\mre(T) \coloneqq \m(\nmT)\qquad \mbox{for any $T\subseteq N$.}
$$
Thus, reflection is nothing but inner composition with a ``complement" mapping.
\end{definition}

\begin{lemma}\label{lem.reflection}\rm
The cones $B(N)$ and $E(N)$ are closed under reflection.
\end{lemma}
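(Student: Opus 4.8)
The plan is to reduce both assertions to a single identity relating the core of a reflected game to the negated core of the original, and then to read off the two closure statements from that identity. First I would note that membership in $B(N)$ and $E(N)$ depends on $\m$ only through the shifted game $\mg$, and that reflection is compatible with this shift: a direct computation gives
$\mgr(S)=\m(\nmS)-\m(N)=\mg(\nmS)-\mg(N)$ for every $S\subseteq N$, so $\mgr$ is again a genuine game (it vanishes at $\emptyset$) and it depends on $\m$ only through $\mg$. Hence it suffices to fix the balanced (resp. exact) game $g\coloneqq\mg$ and to analyse the game $h\coloneqq\mgr$, which satisfies $h(S)=g(\nmS)-g(N)$.

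The key step is to prove the core identity $C(h)=-C(g)$. To do this I would take $x\in{\dv R}^{N}$, set $y\coloneqq -x$, and rewrite the defining conditions of $C(h)$. The efficiency condition $\sum_{i\in N}x_{i}=h(N)=-g(N)$ becomes $\sum_{i\in N}y_{i}=g(N)$, and the coalitional inequality $\sum_{i\in S}x_{i}\geq h(S)=g(\nmS)-g(N)$, after substituting $y=-x$ and using efficiency, becomes $\sum_{i\in\nmS}y_{i}\geq g(\nmS)$. Letting $T\coloneqq\nmS$ range over all coalitions, these are exactly the core constraints defining $C(g)$; therefore $x\in C(h)$ if and only if $-x\in C(g)$, i.e. $C(h)=-C(g)$.

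From this identity the closure of $B(N)$ is immediate: if $g$ is balanced then $C(g)\neq\emptyset$, hence $C(h)=-C(g)\neq\emptyset$, so $h=\mgr\in{\cal B}(N)$ and $\mre\in B(N)$. For $E(N)$ I would track tightness through the same substitution. A core allocation $x\in C(h)$ is tight for $S$, meaning $\sum_{i\in S}x_{i}=h(S)$, precisely when the associated $y=-x\in C(g)$ is tight for the complementary coalition $\nmS$ (again using efficiency to pass from $\sum_{i\in S}y_{i}=g(N)-g(\nmS)$ to $\sum_{i\in\nmS}y_{i}=g(\nmS)$). Since exactness of $g$ provides, for every coalition $T$, a core allocation tight for $T$, and since $T\mapsto\nmT$ is a bijection of $\caP$, we obtain for every $S$ a core allocation of $h$ tight for $S$; thus $h=\mgr\in{\cal E}(N)$ and $\mre\in E(N)$.

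The main obstacle is not a genuine difficulty but a point of bookkeeping in the exact case: one must check that the negation $y=-x$ together with the complementation $S\mapsto\nmS$ correctly matches the tightness condition for the reflection with that for the original game, and that the family of coalitions over which exactness quantifies is closed under complementation, which it trivially is. The balanced case, by contrast, requires nothing beyond non-emptiness of the negated core.
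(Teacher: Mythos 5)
Your proof is correct and follows essentially the same route as the paper: both establish the key identity $C(\mgr)=-C(\mg)$ by pairing the substitution $y=-x$ with the complementation $S\mapsto\nmS$ of coalitions (the paper phrases this as rewriting the core constraints of $\mg$ as upper bounds), and both then deduce the exact case by tracking tightness through this correspondence. Your bookkeeping of efficiency in the coalitional inequalities matches the paper's computation $\mgr(T)=\mg(\nmT)-\mg(N)$, so there is no substantive difference.
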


\begin{proof}
To show $\m\in B(N) \Rightarrow \mre\in B(N)$ realize that one has $-C(\mg)= C(\mgr)$, where $\mg$ is
given by \eqref{eq.zero-projection}. To this end express
the core $C(\mg)$ in terms of the upper bounds instead of the lower bounds:
\begin{align*}
C(\mg) &= \{\, [x_{i}]_{i\in N} \; :\;
\mg (N)=\sum_{i\in N} x_{i}, \; \mg (S)\leq \sum_{i\in S} x_{i},\; \forall\, S\subseteq N\}\\
&= \{\, [x_{i}]_{i\in N} \; :\;
\mg (N)=\sum_{i\in N} x_{i},\;  \mg (N)- \mg (\nmT)\geq \sum_{i\in T} x_{i},\; \forall\, T\subseteq N\}\,.
\end{align*}
This allows one to write
\begin{align*}
-C(\mg) &= \{\, [y_{i}]_{i\in N} \ :\
 -\mg (N)=\sum_{i\in N} y_{i},  \mg (\nmT)-\mg (N)\leq  \sum_{i\in T} y_{i}, \forall T\subseteq N\}\\
&= \{\, [y_{i}]_{i\in N} \ :\
\mgr (N)=\sum_{i\in N} y_{i}, \; \mgr (T)\leq  \sum_{i\in T} y_{i}, \forall\, T\subseteq N\}\\ & = C(\mgr)\,,
\end{align*}
because of the relation $\mgr (T)\stackrel{\eqref{eq.zero-projection}}{=}
m^{*}(T)-m^{*}(\emptyset)= m(\nmT)-m(N) \stackrel{\eqref{eq.zero-projection}}{=}\mg (\nmT)-\mg (N)$ valid
for any $T\subseteq N$.

In order to show $\m\in E(N) \Rightarrow \mre\in E(N)$, it is enough to realize additionally that the inequality for $S\subseteq N$ is tight at
$[x_{i}]_{i\in N}\in C(\mg)$ if and only if
the inequality for $T=\nmS$ is tight at
$[y_{i}]_{i\in N}= -[x_{i}]_{i\in N}\in -C(\mg)=C(\mgr)$.
\qed \end{proof}

\begin{remark}\rm\label{rem.anti-dual}
There exists a widely used notion of duality in cooperative game theory; see \cite[Definition 6.6.3.]{PelegSudholter07}.
The \emph{dual game} of a game $\m$ is then the game $\mstar$ defined by
$$
\mstar(S) \coloneqq \m(N)-\m(N\setminus S) \quad \text{for all $S\subseteq N$.}
$$
Observe that $\mstar=\m(N)-\mre$, where $\mre$ is the reflection of $\m$ from Definition \ref{def.reflection}. Similarly, it is possible to define the notion of an \emph{anti-dual game} of $\m$ by $-\mstar$. Using \eqref{eq.zero-projection} observe that the anti-dual of $\m$ is precisely $-\mstar=\mgr$, which is
discussed in the above proof of Lemma~\ref{lem.reflection}. Then a natural question arises how main solution concepts (e.g., the core) are related to duality or anti-duality. In this context Lemma~\ref{lem.reflection} says that a game is balanced/exact if and only if its anti-dual is a balanced/exact game. The interested reader is invited to consult the paper \cite{Oishi16} for a detailed analysis of the relation of (anti-)duality and solution concepts.
\end{remark}

However, the cone $T(N)$ is not closed under reflection as the following example shows.

\begin{example}\label{exa.total-balanced}
Here is an example of $\m\in T(N)$ such that $\mre\not\in T(N)$. Put $N=\{a,b,c\}$
and $\m(N)=3$ while $\m(S)=2$ for $S\subset N$ with $|S|=2$ and $\m(R)=0$ for remaining $R\subset N$. It is a totally balanced game because it has a min-representation
by four vectors $(1,1,1)$, $(2,2,0)$, $(2,0,2)$, $(0,2,2)$. Nevertheless,
$\mgr (T)=-3$ for $T\subseteq N$, $|T|\geq 2$, and $\mgr (\{i\})=-1$ for $i\in N$.
It is a balanced game because of $C(\mgr)=\{ (-1,-1,-1)\}$ but it is not totally balanced because of
$C(\m^{\prime})=\emptyset$ for the restriction $\m^{\prime}$ of $\mgr$ to subsets of $\{a,b\}$.
\end{example}

A concept related to the reflection is the following one.

\begin{definition}\label{def.conjugate-ineq}\rm
Every inequality \eqref{eq.inequal} is assigned a {\em conjugate inequality} of the form
\begin{equation}
\underbrace{\sum_{T\subseteq N} \alpha^{*} (T)\cdot \m(T)}_{=\llangle \alpha^{*},\m\rrangle}\geq 0\qquad
\mbox{required for $\m\in {\dv R}^{\caP}$}\,,
\label{eq.conjug-ineq}
\end{equation}
where $\alpha^{*}\in {\dv R}^{\caP}$ is the reflection of the coefficient vector $\alpha\in {\dv R}^{\caP}$ in \eqref{eq.inequal}.
\end{definition}

Note in this context that $\alpha$ is o-standardized iff $\alpha^{*}$ is o-standardized.
An important fact appears to be the formula
\begin{equation}
\llangle \alpha^{*},m\rrangle =
\sum_{T\subseteq N} \alpha^{*}(T)\cdot \m(T) 
\stackrel{T\coloneqq\nmS}{=}
\sum_{S\subseteq N} \alpha(S)\cdot \mre(S)
= \llangle \alpha,\mre\rrangle
\label{eq.conjug-relect}
\end{equation}
valid for any pair $\alpha,\m\in {\dv R}^{\caP}$.
It enables us to prove the following statement.

\begin{lemma}\rm\label{lem.facet-conjugate}
An inequality \eqref{eq.inequal} is facet-defining for $E(N)$ iff its
conjugate inequality \eqref{eq.conjug-ineq} is facet-defining for $E(N)$.
The same relation holds for the cone $B(N)$.
\end{lemma}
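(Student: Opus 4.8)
The plan is to exploit the fact that reflection is a linear \emph{involution} of ${\dv R}^{\caP}$ which, by Lemma~\ref{lem.reflection}, restricts to a bijection of $E(N)$ onto itself, and then to read off the conjugate inequality as the pullback of the original one along this involution via the identity \eqref{eq.conjug-relect}. First I would record that the reflection map $\Phi\colon \m\mapsto\mre$ satisfies $(\mre)^{*}(T)=\mre(\nmT)=\m(T)$ for every $T\subseteq N$, so $\Phi$ is a linear automorphism of ${\dv R}^{\caP}$ equal to its own inverse. By Lemma~\ref{lem.reflection} it maps $E(N)$ into $E(N)$, and being an involution it maps $E(N)$ bijectively onto $E(N)$; the same holds for $B(N)$.

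Then, for a coefficient vector $\alpha$, I would use the identity $\llangle\alpha^{*},\m\rrangle=\llangle\alpha,\mre\rrangle$ from \eqref{eq.conjug-relect} in two ways. On the one hand, since $\m\mapsto\mre$ is a bijection of $E(N)$, the conjugate inequality \eqref{eq.conjug-ineq} is valid for $E(N)$ if and only if \eqref{eq.inequal} is. On the other hand, writing $F(\alpha)\coloneqq\{\m\in E(N):\llangle\alpha,\m\rrangle=0\}$ for the face of $E(N)$ exposed by a valid $\alpha$, the same identity gives $\m\in F(\alpha^{*})$ iff $\mre\in F(\alpha)$, that is, $F(\alpha^{*})=\Phi(F(\alpha))$.

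Finally I would invoke the standard fact that a linear automorphism of the ambient space which maps a polyhedral cone onto itself induces an automorphism of its face lattice preserving the affine dimension of each face. Applied to $\Phi$ and $E(N)$, this yields $\dim F(\alpha^{*})=\dim\Phi(F(\alpha))=\dim F(\alpha)$; since a facet is precisely a face of dimension $\dim E(N)-1$, the face $F(\alpha)$ is a facet of $E(N)$ if and only if $F(\alpha^{*})$ is. Combined with the equivalence of validity, this proves that \eqref{eq.inequal} is facet-defining for $E(N)$ iff its conjugate \eqref{eq.conjug-ineq} is. The argument for $B(N)$ is verbatim the same, using only that $B(N)$ is likewise closed under reflection.

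The only genuinely delicate point I anticipate is the bookkeeping around the phrase \emph{facet-defining}, which packages together two requirements: validity of the inequality and codimension one of the face it exposes. Once reflection is recognized as a cone automorphism of $E(N)$ (respectively $B(N)$), both requirements transfer simultaneously — validity through the bijectivity of $\Phi$ on the cone, and the codimension through dimension preservation — so I do not expect a serious obstacle beyond stating the face-lattice fact cleanly.
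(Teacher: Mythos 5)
Your proposal is correct, and it reaches the conclusion by a genuinely more direct route than the paper. The paper also begins with the validity equivalence via $\alpha^{**}=\alpha$, Lemma~\ref{lem.reflection} and \eqref{eq.conjug-relect}, but then passes to the \emph{dual} side: it introduces ${\cal E}_{\ell}(N)$ and the dual cone ${\dv A}_{E}$ (as in the proof of Lemma~\ref{lem.zero-allow2}), shows that ${\dv A}_{E}$ is pointed and closed under reflection, identifies facets of $E(N)$ with extreme rays of ${\dv A}_{E}$, and concludes because the reflection, being a linear automorphism preserving ${\dv A}_{E}$, permutes those extreme rays. You instead stay entirely in the primal: reflection $\Phi$ is a linear involution with $\Phi(E(N))=E(N)$, the identity \eqref{eq.conjug-relect} gives $F(\alpha^{*})=\Phi(F(\alpha))$ for the exposed faces, and a linear automorphism preserving the cone preserves face dimensions, so the codimension-one property transfers along with validity. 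Both arguments hinge on the same principle (a cone automorphism induces a dimension-preserving automorphism of the face structure), applied to mutually dual objects. Your version is shorter, avoids the duality and pointedness bookkeeping, and immediately establishes the generalization stated after the lemma in the paper (any cone in ${\dv R}^{\caP}$ closed under reflection; you do not even need full-dimensionality). What the paper's detour buys is reuse: the cones ${\cal E}_{\ell}(N)$, ${\dv A}_{E}$ and the facet--extreme-ray correspondence are machinery the authors need anyway for the subsequent analysis (e.g.\ Lemmas~\ref{lem.balanced-decomp} and \ref{lem.total-decomp}), so packaging this lemma in the dual framework keeps the whole paper in one formalism.
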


\begin{proof}
We first show that \eqref{eq.inequal} is valid for $E(N)$ iff \eqref{eq.conjug-ineq} is valid for
$E(N)$. Because of $\alpha^{**}=\alpha$ it is enough to verify that the validity of \eqref{eq.inequal} implies
the validity of \eqref{eq.conjug-ineq}. Given $\m\in E(N)$, one has $\mre\in E(N)$ by Lemma~\ref{lem.reflection} and
$\llangle \alpha^{*},m\rrangle \stackrel{\eqref{eq.conjug-relect}}{=} \llangle \alpha,\mre\rrangle \stackrel{\eqref{eq.inequal}}{\geq} 0$.

One can perform an analogous consideration as in the proof of Lemma~\ref{lem.zero-allow2}, that is, to \mbox{introduce}
${\cal E}_{\ell}(N) \coloneqq \{\, \m\in E(N)\,:\ \m(S)=0\quad \mbox{for $S\subseteq N$, $|S|\leq 1$}\,\}$ and put
$$
{\dv A}_{E} ~\coloneqq~ \{\, \alpha\in {\dv R}^{\caP}\,:\
\mbox{$\alpha$ satisfied \eqref{eq.o-stan} ~and} ~~
\llangle \alpha,\m\rrangle\geq 0 ~~ \mbox{for $\m\in {\cal E}_{\ell}(N)$}\,\}\,.
$$
The fact $E(N)=L(N)\oplus {\cal E}_{\ell}(N)$ then implies that ${\dv A}_{E}$ is the dual cone to $E(N)$; it means that $\alpha\in {\dv A}_{E}$ iff \eqref{eq.inequal} holds for any $\m\in E(N)$. This implies the validity
of \eqref{eq.conjug-ineq} for any $\m\in E(N)$, which means that ${\dv A}_{E}$ is closed under reflection. In particular,
the reflection is a one-to-one linear mapping from ${\dv R}^{\caP}$ to ${\dv R}^{\caP}$ which transforms
${\dv A}_{E}$ onto itself. Such a linear mapping transforms faces of ${\dv A}_{E}$ to faces of ${\dv A}_{E}$ of the same dimension.

One can show that ${\dv A}_{E}$ is a pointed cone by the same arguments as in the proof of Lemma~\ref{lem.zero-allow2}. As the cones $E(N)$ and ${\dv A}_{E}$ are dual each other
the facets of\, $E(N)$ correspond to the extreme rays of ${\dv A}_{E}$.
Since the coefficients of facet-defining inequalities for $E(N)$ are just those which generate extreme rays
of ${\dv A}_{E}$ and these are mapped by the reflection to (other) extreme rays
of ${\dv A}_{E}$, the conjugate inequalities to them must also be facet-defining.

The arguments for the balanced cone $B(N)$ are analogous and are left to the reader.
\qed \end{proof}

In fact, the statement from Lemma \ref{lem.facet-conjugate} holds true for any full-dimensional cone in ${\dv R}^{\caP}$ that is closed under reflection.

\subsection{How to assign an inequality to a min-balanced system}\label{ssec.min-balan-ineq}
Facet-defining inequalities \eqref{eq.inequal} for cones $B(N)$ and $T(N)$ appear to be
determined uniquely (up to a positive multiple) by the induced set systems
$$
{\cal B}_{\alpha} \coloneqq \{\,S\subseteq N\,:\ \alpha(S)<0\,\},
$$
where $\alpha\in {\dv R}^{\caP}$ is the respective coefficient vector. A converse relation is established in this section.
More specifically, given a {\em non-trivial min-balanced system $\calB$}, by Lemma~\ref{lem.equiv-min-bal},
there are unique coefficients
$\lambda_{S}$, $S\in\calB$, with
$$
\chi_{M} =\sum_{S\in\calB} \lambda_{S}\cdot \chi_{S}\qquad \mbox{where $M=\bigcup\calB$ and $\lambda_{S}>0$ for $S\in\calB$.}
$$
In fact, one can even show that $\lambda_{S}\in {\dv Q}$. Indeed,
$\chi_{M} =\sum_{S\in\calB} \lambda_{S}\cdot \chi_{S}$ means that the coefficient
vector $\lambda\in {\dv R}^{\calB}$ is a solution of a matrix equality
$\lambda\cdot C=\chi_{M}$ with a zero-one matrix $C\in {\dv R}^{\calB\times N}$.
Since unique solution exists, a regular column $\calB\times T$-submatrix  of $C$,
where $T\subseteq N$, $|T|=|\calB|$, exists such that $\lambda\cdot C^{\calB\times T}=\chi_{M\cap T}$.
Since $C$ has zero columns for $i\in \nmM$ one has $T\subseteq M$.
Nevertheless, the inverse of this regular zero-one submatrix is
a rational matrix, which implies that the components of $\lambda$ are in ${\dv Q}$.

Thus, a unique integer $k\geq1$ exists such that all
$k\cdot\lambda_{S}\in {\dv Z}$ with $S\in\calB$ are relatively prime. For any $S\subseteq N$, define
\begin{equation}
\alpha_{\calB}(S) ~\coloneqq~
\begin{cases}
\,\,k & \mbox{if $S=M=\bigcup\calB$},\\
-k\cdot\lambda_{S} & \mbox{if $S\in\calB$},\\
-k+k\sum\limits_{S\in\calB}\lambda_S =-\alpha_{\calB}(M)-\sum\limits_{S\in\calB}\alpha_{\calB}(S)& \mbox{if $S=\emptyset$},\\
\,\,0 & \text{otherwise.}
\end{cases}~~~~
\label{eq.coef-def}
\end{equation}
This ensures the following equality:
\begin{equation}
\sum_{S\subseteq N}\alpha_{\calB} (S)\cdot\chi_{S} =\alpha_{\calB}(M)\cdot\chi_{M} +\sum_{S\in\calB} \alpha_{\calB}(S)\cdot\chi_{S} +\alpha_{\calB}(\emptyset)\cdot\underbrace{\chi_{\emptyset}}_{=\mbox{\scriptsize\sf 0}} =\0\in {\dv R}^{N}\,.
\label{eq.B-decom}
\end{equation}
For any $i\in M$ one has
\begin{align*}
\sum_{S\subseteq N:\, i\in S} \alpha_{\calB} (S)&=\sum_{S\in\{M\}\cup\calB:\, i\in S} \alpha_{\calB} (S)=
k- k\cdot\sum_{S\in\calB:\, i\in S} \lambda_{S}=\\ & =k\cdot (1-\sum_{S\in\calB:\, i\in S} \lambda_{S}) =
k\cdot 0= 0\,.
\end{align*}
Analogously, by the definition of $M$, one has
$\sum_{S\subseteq N:\, i\in S} \alpha_{\calB} (S)=0$ for any $i\in \nmM$; thus, the vector $\alpha_{\calB}\in {\dv Z}^{\caP}$
is o-standardized. Choose $i\in M$ and consider $T\in\calB$ such that $i\not\in T$ (note that $\calB$ is non-trivial),
which allows one to write
\begin{align*}
\alpha_{\calB}(\emptyset) & = -\alpha_{\calB}(M)-\sum_{S\in\calB} \alpha_{\calB}(S)=
-k +k\cdot\sum_{S\in\calB} \lambda_{S}=k\cdot (\sum_{S\in\calB} \lambda_{S}-1)\\
&= k\cdot \underbrace{(\sum_{S\in\calB:\, i\in S} \lambda_{S}-1)}_{=0} ~+~
k\cdot \underbrace{(\sum_{S\in\calB:\, i\not\in S} \lambda_{S})}_{\geq \lambda_{T}} > 0,
\end{align*}
which implies $\alpha_{\calB} (\emptyset)\geq 1$. Thus, the corresponding inequality
\begin{equation}
\llangle \alpha_{\calB},\m\rrangle =\alpha_{\calB}(M)\cdot \m(M) +\sum_{S\in\calB} \alpha_{\calB}(S)\cdot \m(S)
+\alpha_{\calB}(\emptyset)\cdot \m(\emptyset)\geq 0\quad \mbox{for $\m\in {\dv R}^{\caP}$}
\label{eq.B-ineq}
\end{equation}
of the form \eqref{eq.inequal} is assigned the set system $\calB=\calB_{\alpha}$ with $\alpha=\alpha_{\calB}$.
This yields mutually inverse transformation $\calB\leftrightarrow\alpha_{\calB}=\alpha$
between non-trivial min-balanced systems and the coefficient vectors in the corresponding inequalities.

\begin{definition}\label{def.min-bal-coef}\rm
Given a non-trivial min-balanced system $\calB\subseteq\caP$, the above coefficient vector
in ${\dv Z}^{\caP}$ defined in \eqref{eq.coef-def} will be denoted by $\alpha_{\calB}$.
\end{definition}

\begin{example}
Consider the min-balanced system $\calB$ from  Example \ref{exa.minbal}
with the player set $N=\{a,b,c,d,e\}$, that is, $\calB = \{\{a,b\},\{a,c\},\{a,d\},\{b,c,d\}\}$. The carrier of $\calB$ is $M=\{a,b,c,d\}$ and the coefficient vector is
$$
\alpha_{\calB}(S) \coloneqq
\begin{cases}
\,\,3 & \mbox{for $S=M$}, \\
-1 & \mbox{for $S\in \{\{a,b\},\{a,c\},\{a,d\}\}$}, \\
-2 & \mbox{for $S=\{b,c,d\}$},\\
\,\,2 & \mbox{for $S=\emptyset$},\\
\,\,0 & \text{otherwise.}
\end{cases}
$$
Then the inequality \eqref{eq.B-ineq} takes the form
$$
3\cdot\m(\{a,b,c,d\}) - \m(\{a,b\})-\m(\{a,c\})-\m(\{a,d\}) - 2\cdot\m(\{b,c,d\}) + 2\cdot\m(\emptyset) \geq 0\,.
$$
\end{example}

The next lemma follows from the results in \cite[Theorem\,2]{Shapley1967On-balanced-set};
one just applies what is said in Section~\ref{ssec.zero-allow} about the correspondence between cones $B(N)$ and ${\cal B}(N)$.

\begin{lemma}\label{lem.balanced-cone}\rm
Assuming $|N|\geq 2$ the facet-defining inequalities for $B(N)$ are just those
which correspond to non-trivial min-balanced systems\/ $\calB$ on $N$.
In particular, given $\m\in {\dv R}^{\caP}$,
$$
\m\in B(N)~\Leftrightarrow~\llangle\alpha_{\calB},\m\rrangle\geq 0 \quad \text{for any non-trivial min-balanced system $\calB$ on $N$.}
$$
\end{lemma}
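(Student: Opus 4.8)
The plan is to reduce the claim to the classical Bondareva--Shapley description of the cone $\mathcal{B}(N)$ of balanced games and then transport it to the extended cone $B(N)$ via the correspondence already set up in Section~\ref{ssec.zero-allow}. First I would invoke Lemma~\ref{lem.zero-allow2} in its version for the pair $\mathcal{B}(N)$, $B(N)$: an o-standardized inequality \eqref{eq.inequal} is facet-defining for $B(N)$ if and only if it is facet-defining for $\mathcal{B}(N)$. Since every game satisfies $\m(\emptyset)=0$, the term $\alpha_{\calB}(\emptyset)\cdot\m(\emptyset)$ in \eqref{eq.B-ineq} drops out on $\mathcal{B}(N)$, so it suffices to identify the facet-defining inequalities of the game cone $\mathcal{B}(N)$ and to check that they coincide, up to a positive multiple, with the $\llangle\alpha_{\calB},\m\rrangle\geq 0$.

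For the game cone I would start from $\mathcal{B}(N)=\{\mg\in\mathcal{G}(N):C(\mg)\neq\emptyset\}$ and apply the theorem of alternatives (Farkas' lemma, equivalently LP duality) to the linear system defining $C(\mg)$, namely $\sum_{i\in N}x_i=\mg(N)$ together with $\sum_{i\in S}x_i\geq\mg(S)$ for $\emptyset\neq S\subsetneq N$. This yields the Bondareva--Shapley criterion \cite{Bondareva63,Shapley1967On-balanced-set}: $\mg\in\mathcal{B}(N)$ if and only if $\sum_{S\in\calB}\lambda_S\,\mg(S)\leq\mg(N)$ holds for every balanced collection $\calB$ on $N$ with weights $\lambda_S>0$ obeying $\sum_{S\in\calB:\,i\in S}\lambda_S=1$ for all $i\in N$; the easy (validity) direction is just $\sum_{S\in\calB}\lambda_S\,\mg(S)\leq\sum_{S\in\calB}\lambda_S\llangle\chi_S,x\rrangle=\llangle\chi_N,x\rrangle=\mg(N)$ for $x\in C(\mg)$. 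The refinement in \cite[Theorem~2]{Shapley1967On-balanced-set} then says that it is enough to impose these inequalities for \emph{minimal} balanced collections and, crucially, that exactly these minimal collections yield the irredundant (facet-defining) inequalities, i.e. they index the extreme rays of the dual cone of balancing vectors.

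The remaining work is a matching of definitions and normalizations. Using Lemma~\ref{lem.equiv-min-bal} I would verify that Shapley's minimal balanced collections on $N$ are precisely the non-trivial min-balanced systems whose carrier is $N$: condition (i) is the balancing identity with strictly positive weights, condition (ii) (linear independence of the incidence vectors) is Shapley's minimality and forces uniqueness of the $\lambda_S$, while the normalization $\sum_{S\in\calB:\,i\in S}\lambda_S=1$ for every $i\in N$ forces $\bigcup\calB=N$. The trivial system $\{N\}$ gives only the vacuous $\mg(N)\geq\mg(N)$ and is discarded, which is exactly why only non-trivial systems occur, and for $|N|\geq 2$ the partition of $N$ into singletons shows at least one such system exists. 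Finally I would check that, after clearing denominators by the integer $k$ of \eqref{eq.coef-def} and using $\m(\emptyset)=0$, the inequality $\mg(N)-\sum_{S\in\calB}\lambda_S\,\mg(S)\geq 0$ is precisely $\llangle\alpha_{\calB},\m\rrangle\geq 0$ up to the positive factor $k$; together with the bijection $\calB\leftrightarrow\alpha_{\calB}$ established in Section~\ref{ssec.min-balan-ineq}, this delivers both the facet description and the stated membership characterization.

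The genuinely substantive input is Shapley's Theorem~2 itself, namely that minimal balanced collections give \emph{exactly} the irredundant inequalities (extreme rays of the dual), not merely a sufficient family; but since that result is cited, the real labor in this proof is bookkeeping. The delicate points to get right are that ``minimal balanced collection on $N$'' and ``non-trivial min-balanced system with carrier $N$'' name the same objects, and that the integral normalization \eqref{eq.coef-def} reproduces Shapley's inequality up to a positive scalar, so that the facet-to-system assignment is a genuine bijection rather than merely a surjection.
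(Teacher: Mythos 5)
Your proposal is correct and takes essentially the same route as the paper: the paper likewise obtains Lemma~\ref{lem.balanced-cone} by citing Shapley's classical result \cite[Theorem~2]{Shapley1967On-balanced-set} and applying the correspondence between the cones ${\cal B}(N)$ and $B(N)$ established in Section~\ref{ssec.zero-allow} (Lemma~\ref{lem.zero-allow2}). Your write-up simply makes explicit the bookkeeping that the paper leaves to the reader, namely the identification of Shapley's minimal balanced collections on $N$ with non-trivial min-balanced systems having carrier $N$, and the fact that the normalization \eqref{eq.coef-def} reproduces Shapley's inequality up to the positive factor $k$.
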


The following concept relates the above observation to concepts from Section~\ref{ssec.reflection}.

\begin{definition}\label{def.complementary-syst}\rm
Every min-balanced system $\calB\subseteq {\cal P}(N)$ is assigned its {\em complementary system\/}
$$
\calB^{*}\coloneqq \{\nmS \,:\ S\in\calB\}\,.
$$
\end{definition}

The fact that $B(N)$ is closed under reflection 
then basically implies the following.

\begin{corollary}\label{cor.balanced-dual-N}\rm
If $\calB$ is a non-trivial min-balanced set system on $N$, then its complementary system
${\cal B}^{*}$ is also a non-trivial min-balanced system on $N$ inducing the
conjugate inequality to the inequality \eqref{eq.B-ineq} with $M=N$.
\end{corollary}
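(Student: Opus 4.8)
The plan is to obtain the statement essentially for free from the three facts already assembled in the preceding sections: the cone $B(N)$ is closed under reflection (Lemma~\ref{lem.reflection}), facet-defining inequalities for $B(N)$ occur in conjugate pairs (Lemma~\ref{lem.facet-conjugate}), and these facet-defining inequalities are exactly the ones attached to non-trivial min-balanced systems on $N$ (Lemma~\ref{lem.balanced-cone}). Since $\calB$ is non-trivial min-balanced on $N$, its carrier is $M=N$, so the associated inequality \eqref{eq.B-ineq} is facet-defining for $B(N)$ by Lemma~\ref{lem.balanced-cone}. By Lemma~\ref{lem.facet-conjugate} the conjugate inequality, whose coefficient vector is the reflection $\alpha_{\calB}^{*}$ of $\alpha_{\calB}$ (Definition~\ref{def.conjugate-ineq}), is again facet-defining for $B(N)$. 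Applying Lemma~\ref{lem.balanced-cone} in the reverse direction, this conjugate inequality must be (a positive multiple of) the inequality induced by some non-trivial min-balanced system $\calC$ on $N$. It then only remains to identify $\calC$ with $\calB^{*}$.

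This identification is the crux, and it is carried out through the induced set system ${\cal B}_{\alpha}=\{S\subseteq N:\alpha(S)<0\}$. By the construction in \eqref{eq.coef-def} the negative coordinates of $\alpha_{\calB}$ sit exactly on the members of $\calB$, since $\alpha_{\calB}(S)=-k\lambda_{S}<0$ for $S\in\calB$ while $\alpha_{\calB}(N)=k>0$ and $\alpha_{\calB}(\emptyset)>0$; thus the bijection of Lemma~\ref{lem.balanced-cone} recovers a min-balanced system as the negative support of its coefficient vector, and this support is invariant under multiplication by a positive scalar. Hence $\calC$ is the negative support of $\alpha_{\calB}^{*}$, which using $\alpha_{\calB}^{*}(T)=\alpha_{\calB}(\nmT)$ equals $\{T\subseteq N:\alpha_{\calB}(\nmT)<0\}=\{T:\nmT\in\calB\}=\{\nmS:S\in\calB\}=\calB^{*}$. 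This simultaneously shows that $\calB^{*}$ is a non-trivial min-balanced system on $N$ and that it induces precisely the conjugate inequality, as claimed. The only real obstacle along this route is precisely this identification, and it reduces to the one-line computation of the negative support just displayed.

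For a self-contained verification bypassing the duality machinery one would instead check the two conditions of Lemma~\ref{lem.equiv-min-bal} for $\calB^{*}$ directly. Writing $\chi_{N}=\sum_{S\in\calB}\lambda_{S}\chi_{S}$ with $\lambda_{S}>0$ and setting $\Lambda\coloneqq\sum_{S\in\calB}\lambda_{S}$, the complement identity $\chi_{\nmS}=\chi_{N}-\chi_{S}$ gives $\sum_{S\in\calB}\lambda_{S}\chi_{\nmS}=(\Lambda-1)\,\chi_{N}$; since $\alpha_{\calB}(\emptyset)=k(\Lambda-1)>0$ forces $\Lambda>1$, dividing by $\Lambda-1$ exhibits $\chi_{N}$ as a strictly positive combination of $\{\chi_{\nmS}:S\in\calB\}$, which is condition (i) and also shows the carrier of $\calB^{*}$ is $N$. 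Here the main obstacle is condition (ii): to prove linear independence of $\{\chi_{\nmS}:S\in\calB\}$ I would assume $\sum_{S}c_{S}\chi_{\nmS}=\0$, substitute $\chi_{\nmS}=\chi_{N}-\chi_{S}$ together with $\chi_{N}=\sum_{S}\lambda_{S}\chi_{S}$, and then invoke the independence of $\{\chi_{S}:S\in\calB\}$ from Lemma~\ref{lem.equiv-min-bal}(ii) to deduce $c_{S}=(\sum_{S'}c_{S'})\,\lambda_{S}$; summing over $S$ and using $\Lambda\neq 1$ forces $\sum_{S'}c_{S'}=0$ and hence every $c_{S}=0$. Either route delivers both halves of the corollary, and I would present the duality argument as the principal proof, since the surrounding section is designed exactly to make such reflections of facets automatic.
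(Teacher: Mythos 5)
Your principal duality argument is essentially the paper's own proof: the paper likewise invokes Lemma~\ref{lem.balanced-cone} together with Lemma~\ref{lem.facet-conjugate} to conclude that the reflection $\alpha_{\calB}^{*}$ must be a positive multiple of $\alpha_{\calC}$ for some non-trivial min-balanced system $\calC$ on $N$, and then identifies $\calC=\calB^{*}$; the only real difference is that the paper carries out this identification by explicitly rewriting the balance equation for $\calB$ so as to exhibit $\alpha_{\calB}^{*}$ in the form \eqref{eq.coef-def} for $\calB^{*}$, and then uses relative primeness of the integers $k\lambda_{S}$ to upgrade ``positive multiple'' to the exact equality $\alpha_{\calB}^{*}=\alpha_{\calB^{*}}$, whereas you read $\calC$ off as the negative support of $\alpha_{\calB}^{*}$ (which pins the inequality down only up to positive scaling --- harmless for the statement, but worth the extra line if you want the coefficient vectors themselves to coincide). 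Your self-contained second route via Lemma~\ref{lem.equiv-min-bal} is also sound and is \emph{not} the paper's route; note only that, as written, it delivers the min-balancedness half, and for the ``induces the conjugate inequality'' half one would still add the short comparison of $\alpha_{\calB^{*}}$, built from the balancing coefficients $\lambda_{S}/\bigl(\sum_{T\in\calB}\lambda_{T}-1\bigr)$, with the reflection $\alpha_{\calB}^{*}$.
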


\begin{proof}
If $\calB$ is a min-balanced set system on $N$, then write using \eqref{eq.B-decom} and \eqref{eq.coef-def}:
$$
\0=k\cdot\chi_{N}-\sum_{S\in\calB}k\cdot\lambda_{S}\cdot\chi_{S}
+(-k+\sum_{S\in\calB}k\cdot\lambda_{S})\cdot\underbrace{\chi_{\emptyset}}_{=\mbox{\scriptsize\sf 0}}\,,
$$
where the coefficients $\lambda_{S}$ and the constant $k$ are introduced in the
beginning of Section~\ref{ssec.min-balan-ineq}. We omit the zero term containing $\chi_{\emptyset}$, multiply the equality by $(-1)$ and extend the right-hand side of it by $\pm k\cdot\sum_{S\in\calB}\lambda_{S}\cdot\chi_{N}$ to get
\begin{eqnarray*}
\0 &=&(-k+k\cdot\sum_{S\in\calB}\lambda_{S})\cdot\chi_{N} -\sum_{S\in\calB}k\cdot\lambda_{S}\cdot\chi_{\nmS} \\
&=& (-k+\sum_{T\in\calB^{*}}k\cdot\lambda_{\nmT})\cdot\chi_{N} -\sum_{T\in\calB^{*}}k\cdot\lambda_{\nmT}\cdot\chi_{T} + k\cdot\underbrace{\chi_{\emptyset}}_{=\mbox{\scriptsize\sf 0}},
\end{eqnarray*}
because the coefficient with $\chi_{\emptyset}=\0$ is immaterial. Thus, the coefficient vectors assigned to $\calB$ and $\calB^{*}$ by \eqref{eq.coef-def} are related by $\alpha_{\calB}^{*}=\alpha_{\calB^{*}}$.

By Lemma~\ref{lem.balanced-cone}, $\llangle \alpha_{\calB},\m\rrangle\geq 0$ is facet-defining
inequality for $\m\in B(N)$. By (the second claim of) Lemma~\ref{lem.facet-conjugate}
$\llangle \alpha_{\calB^{*}},\m\rrangle=\llangle \alpha_{\calB}^{*},\m\rrangle\geq 0$
is also facet-defining inequality for $\m\in B(N)$. In particular, by Lemma~\ref{lem.balanced-cone},
$\alpha_{\calB^{*}}$ must be a positive multiple of $\alpha_{\cal C}$ for a non-trivial
min-balanced system ${\cal C}$ on $N$. By \eqref{eq.coef-def}, this gives $\calB^{*}={\cal C}$ and
$\calB^{*}$ is min-balanced. The fact that $k\cdot\lambda_{S}$, $S\in \calB$, are relatively prime
integers then implies $\alpha_{\calB^{*}}=\alpha_{\cal C}$. Thus, $\llangle \alpha_{\cal C},\m\rrangle\geq 0$
is a conjugate inequality to \eqref{eq.B-ineq} (with $M=N$).
\qed \end{proof}

Note that the above observation was already mentioned by Shapley \cite{Shapley1967On-balanced-set} as an empirical fact observed in case $|N|\leq 5$.

\section{Irreducible systems}\label{sec.irreducible}
We are in position to introduce the following concept, which is crucial for our main result.

\begin{definition}\label{def.irreducible}\rm
We say that a min-balanced system $\calB\subseteq\caP$ is
{\em reducible\/} if there exists $A\subset \bigcup\calB$ and
$B\in\calB_{A}\coloneqq\{ S\in\calB\,:\ S\subset A\}$ such that
\begin{enumerate}
\item $\chi_{A}$ is in the conic hull of $\{\chi_{S}\,:\, S\in\calB_{A}\}$ and
\item $\chi_{\bigcup\calB}$  is in the conic hull of $\{\chi_{T}\,:\, T\in\{A\}\cup(\calB\setminus \{B\})\}$.
\end{enumerate}
A min-balanced system $\calB\subseteq\caP$ that is not reducible is called {\em irreducible\/}.
\end{definition}

A reducible min-balanced system is necessarily non-trivial.
Note that, without loss of generality, one can require
$A=\bigcup\calB_{A}\not\in\calB$ in the definition of reducibility, as proved below
in Lemma~\ref{lem.irreducible}.
The meaning of the reducibility condition is that the inequality corresponding to
$\calB$ is derivable from the inequalities which correspond to other min-balanced systems
$\calB^{\prime}$ satisfying $\bigcup\calB^{\prime}\subseteq\bigcup\calB$.

Clearly, the concept of irreducibility is invariant with respect to a permutation of players.
The types of non-trivial irreducible min-balanced system on $N$ for $2\leq |N|\leq 4$
can be found in Appendix.

\begin{lemma}\label{lem.irreducible}\rm
Let $\calB$ be a min-balanced system. If  $A\subset \bigcup\calB$
and $B\in\calB_{A}$ exist such that both $\calB_{A}$ and $\{A\}\cup (\calB\setminus\{B\})$
satisfy the conditions from Definition~\ref{def.irreducible}, then
$|A|\geq 2$ and one has both $\bigcup\calB_{A}=A$ and $A\not\in\calB$.\\
Moreover, there exist a min-balanced system $\calC\subseteq\calB_{A}$ on $A$ with $B\in\calC$ and a min-balanced system $\calD\subseteq \{A\}\cup (\calB\setminus\{B\})$ on $\bigcup\calB$ with $A\in\calD$ such that
the inequality \eqref{eq.B-ineq} corresponding to $\calB$ is a conic combination of inequalities corresponding to  $\calC$ and $\calD$.
\end{lemma}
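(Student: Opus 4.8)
The plan is to lean throughout on the linear independence of incidence vectors of a min-balanced system (Lemma~\ref{lem.equiv-min-bal}(ii)): it forces every relevant conic representation of an incidence vector to be \emph{unique}, and uniqueness is exactly what pins down the subsystems $\calC$ and $\calD$ and matches their coefficients against those of $\calB$. Write $M=\bigcup\calB$ and let $\chi_M=\sum_{S\in\calB}\lambda_S\chi_S$ with all $\lambda_S>0$ be the unique balancing of $\calB$.

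First I would dispatch the three elementary claims. Since $B\in\calB_A$ we have $\emptyset\neq B\subset A$ (the inclusion is strict, and $\emptyset\notin\calB$ because $\chi_\emptyset=\0$ cannot belong to the independent family $\{\chi_S:S\in\calB\}$ by Lemma~\ref{lem.equiv-min-bal}(ii)), whence $|A|\geq 2$. Condition~1 supplies a representation $\chi_A=\sum_{S\in\calB_A}\mu_S\chi_S$ with $\mu_S\geq 0$; comparing supports ($\mathrm{supp}\,\chi_A=A$ on the left, while $\bigcup\{S:\mu_S>0\}\subseteq\bigcup\calB_A\subseteq A$ on the right) forces $\bigcup\calB_A=A$. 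Finally, if $A\in\calB$ then $\chi_A-\sum_{S\in\calB_A}\mu_S\chi_S=\0$ would be a non-trivial dependence among $\{\chi_S:S\in\calB\}$ (the coefficient of $\chi_A$ is $1$, and $A\notin\calB_A$ since membership in $\calB_A$ requires strict inclusion), contradicting Lemma~\ref{lem.equiv-min-bal}(ii); hence $A\notin\calB$.

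Next I would produce $\calC$ and $\calD$. As $\{\chi_S:S\in\calB_A\}$ is a subfamily of the independent family $\{\chi_S:S\in\calB\}$, the representation $\chi_A=\sum_{S\in\calB_A}\mu_S\chi_S$ is unique; set $\calC:=\{S\in\calB_A:\mu_S>0\}$, which by Lemma~\ref{lem.equiv-min-bal} is min-balanced with carrier $\bigcup\calC=A$. To force $B\in\calC$, take any representation $\chi_M=\nu_A\chi_A+\sum_{T\in\calB\setminus\{B\}}\nu_T\chi_T$ with all coefficients $\geq 0$ from Condition~2, substitute the expression for $\chi_A$, and regroup as a conic combination of $\{\chi_S:S\in\calB\}$. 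Matching against the unique balancing of $\calB$ and reading off the coefficient of $\chi_B$ (which enters only through $\chi_A$, as $B\notin\calB\setminus\{B\}$) gives $\nu_A\mu_B=\lambda_B>0$, so $\nu_A>0$ and $\mu_B>0$; in particular $B\in\calC$. Using $\mu_B>0$ one sees that $\chi_A$ has a non-zero $\chi_B$-component, so $\{\chi_A\}\cup\{\chi_S:S\in\calB\setminus\{B\}\}$ is independent; consequently the Condition~2 representation is unique and $\calD:=\{T:\nu_T>0\}$ is min-balanced with carrier $M$ (again by a support argument), with $A\in\calD$ and $\calD\subseteq\{A\}\cup(\calB\setminus\{B\})$.

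The hard part will be the final bookkeeping identity. Writing each inequality \eqref{eq.B-ineq} in normalized form (divide by its integer $k$), i.e. $I_\calF(m)=m(\bigcup\calF)-\sum_{S\in\calF}\lambda^\calF_S\,m(S)+\big(\textstyle\sum_S\lambda^\calF_S-1\big)\,m(\emptyset)$, I would verify the single clean identity $I_\calB=I_\calD+\nu_A\,I_\calC$. The coefficient of $m(M)$ is $1$ on each side; the $m(A)$-term cancels because $A\notin\calB$ and $-\nu_A+\nu_A\cdot 1=0$; for each $S\in\calB$ the coefficient matches $-\lambda_S$ precisely because substituting $\chi_A$ into $\calD$'s balancing reproduces $\calB$'s balancing, giving $\lambda_S=\nu_S+\nu_A\mu_S$ (with absent coefficients read as $0$); and the $m(\emptyset)$-coefficient matches since $\sum_{S\in\calB}\lambda_S=\sum_{T\in\calD\setminus\{A\}}\nu_T+\nu_A\sum_{S\in\calC}\mu_S$. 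Rescaling back to the integer vectors of \eqref{eq.coef-def} then yields $\alpha_\calB=\tfrac{k_\calB}{k_\calD}\,\alpha_\calD+\tfrac{k_\calB\nu_A}{k_\calC}\,\alpha_\calC$ with both multipliers strictly positive, which is the asserted conic combination. I expect the only genuine friction to be keeping the coefficient-matching airtight when $\calC$ and $\calD\setminus\{A\}$ overlap (some $S\subset A$ with $S\neq B$ may lie in both), and this is handled uniformly by the identity $\lambda_S=\nu_S+\nu_A\mu_S$.
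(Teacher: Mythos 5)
Your proposal is correct and follows essentially the same route as the paper's own proof: the same substitute-and-match argument (plug the representation of $\chi_A$ into that of $\chi_{\bigcup\calB}$ and invoke uniqueness of the balancing coefficients via linear independence) yielding $\lambda_B=\nu_A\mu_B>0$, the same definitions of $\calC$ and $\calD$ as the positive-coefficient subsystems, and the same final decomposition of the inequality for $\calB$ as the $\calD$-inequality plus $\nu_A$ times the $\calC$-inequality (the paper's $\beta_A$ is your $\nu_A$). Your explicit check of the $m(\emptyset)$-coefficient is if anything slightly more careful than the paper's appeal to o-standardization.
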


\begin{proof}
Throughout the proof we will use $M$ to denote the carrier of $\calB$, that is,
$M\coloneqq \bigcup\calB$.
As $\calB$ is non-trivial, one can apply Lemma~\ref{lem.min-balanced}. Hence,  $B\in\calB$ implies $B\neq\emptyset$ and $B\subset A$ gives $|A|\geq 2$.
The assumption that $\chi_{A}$ is in the conic hull of $\{\chi_{S}\,:\, S\in\calB_{A}\}$
means that there exist $\mu_{S}\geq 0$, $S\in\calB_{A}$, with
$\chi_{A}=\sum_{S\in\calB_{A}} \mu_{S}\cdot\chi_{S}$, which implies $A=\bigcup\calB_{A}$.

Analogously, the other assumption says that there exist $\beta_{T}\geq 0$,
$T\in\{A\}\cup (\calB\setminus\{B\})$, with
$\chi_{M}=\sum_{T\in\{A\}\cup (\calB\setminus\{B\})} \beta_{T}\cdot\chi_{T}$. Thus,
\begin{align*}
\chi_{M} &= \beta_{A}\cdot\chi_{A}\, + \sum_{T\in\calB\setminus\{A,B\}} \beta_{T}\cdot\chi_{T} ~=~
\beta_{A}\cdot \sum_{S\in\calB_{A}} \mu_{S}\cdot\chi_{S}\, + \sum_{T\in\calB\setminus\{A,B\}} \beta_{T}\cdot\chi_{T}\\
&= \sum_{S\in\calB_{A}} \beta_{A}\cdot\mu_{S}\cdot\chi_{S}\, + \sum_{T\in\calB\setminus\{A,B\}} \beta_{T}\cdot\chi_{T}\\
&= \beta_{A}\cdot\mu_{B}\cdot\chi_{B}\, + \sum_{S\in\calB_{A}\setminus\{B\}} (\beta_{A}\cdot\mu_{S}+\beta_{S})\cdot\chi_{S}\,
+ \sum_{T\in\calB\setminus(\{A\}\cup\calB_{A})} \beta_{T}\cdot\chi_{T}\,.
\end{align*}
On the other hand, since $\calB$ is min-balanced, the uniquely
determined coefficients $\lambda_{S}$, $S\in\calB$, in the decomposition
\begin{equation}
\chi_{M} = \sum_{S\in\calB} \lambda_{S}\cdot\chi_{S}
\label{eq.decomp}
\end{equation}
must be all strictly positive by Lemma \ref{lem.equiv-min-bal}. The uniqueness of the coefficients implies
\begin{align}
\lambda_{B} &= \beta_{A}\cdot\mu_{B} \label{eq.lambda-decomp}\\
\lambda_{S} &= \beta_{A}\cdot\mu_{S}+\beta_{S}\quad \mbox{for $S\in\calB_{A}\setminus\{B\}$,} \nonumber\\
\lambda_{T} &= \beta_{T}\qquad\qquad\quad \mbox{~for $T\in \calB\setminus(\{A\}\cup\calB_{A})$,~ and}  \nonumber \\
\lambda_{R} &=  0 \qquad\qquad\qquad \mbox{for other $R\subseteq N$.} \nonumber
\end{align}
This implies $A\not\in\calB$ as otherwise a contradictory conclusion $\lambda_{A}>0$ and $\lambda_{A}=0$  is derived from \eqref{eq.decomp} and  \eqref{eq.lambda-decomp}.

As $\lambda_{S}>0$ for $S\in\calB$, the condition \eqref{eq.lambda-decomp}
gives $\lambda_{B}>0 ~\Rightarrow~ \beta_{A}>0$ and $\mu_{B}>0$. We put
$$
\calC \coloneqq \{S\in\calB_{A} \,:\, \mu_{S}>0\} \quad\mbox{and}\quad
\calD \coloneqq \{T\in\{A\}\cup (\calB\setminus \{B\})\,:\, \beta_{T}>0\}
$$
and obtain $B\in\calC$ and $A\in\calD$. Since $\calC\subseteq\calB_{A}\subseteq\calB$ the linear independence condition (ii) from Lemma~\ref{lem.equiv-min-bal} for $\calC$ is evident and, therefore, again by Lemma~\ref{lem.equiv-min-bal}, $\calC$ is min-balanced on $A$.
To verify the linear independence condition (ii) for $\{A\}\cup (\calB\setminus\{B\})$ it is enough to show that $\chi_{A}$ is {\em not\/} in the linear hull of
$\{\chi_{S}\,:\ S\in\calB\setminus\{B\}\,\}$.
Thus, assume for contradiction that $\chi_{A}=\sum_{S\in\calB\setminus\{B\}} \gamma_{S}\cdot\chi_{S}$ with real $\gamma_{S}$, $S\in\calB\setminus\{B\}$, which yields
\begin{align*}
\chi_{M} &= \beta_{A}\cdot\chi_{A} + \sum_{T\in\calB\setminus\{B\}} \beta_{T}\cdot\chi_{T}=
\beta_{A}\cdot (\sum_{S\in\calB\setminus\{B\}} \gamma_{S}\cdot\chi_{S}) + \sum_{T\in\calB\setminus\{B\}} \beta_{T}\cdot\chi_{T}\\
&= \sum_{T\in\calB\setminus\{B\}} (\beta_{A}\cdot\gamma_{T}+\beta_{T})\cdot\chi_{T}\,.
\end{align*}
As $\calB$ is min-balanced on $M$, the uniqueness of the coefficients in \eqref{eq.decomp} implies a contradictory conclusion that
simultaneously $\lambda_{B}>0$ by \eqref{eq.decomp} and $\lambda_{B}=0$. Thus, $\calD\subseteq \{A\}\cup(\calB\setminus \{B\})$ satisfies both (i) and (ii), so, it is min-balanced on $M$ by Lemma~\ref{lem.equiv-min-bal}.

In order to prove the last claim, apply \eqref{eq.lambda-decomp} to express the inequality
$\m(M)-\sum_{S\in\calB} \lambda_{S}\cdot \m(S)\geq 0$
for $\m\in {\dv R}^{\caP\setminus\{\emptyset\}}$ and non-empty components only
as the sum of inequalities
\begin{align*}
\beta_{A}\cdot \m(A)- \sum_{S\in\calB_{A}} \beta_{A}\cdot\mu_{S}\cdot \m(S) ~=~
\beta_{A}\cdot [\m(A)- \sum_{S\in\calC} \mu_{S}\cdot \m(S)] &\geq 0,\\
\m(M)-\beta_{A}\cdot \m(A)-\sum_{S\in\calB\setminus\{B\}} \beta_{S}\cdot \m(S) ~=~
\m(M)-\sum_{T\in\calD} \beta_{T}\cdot \m(T) &\geq 0.
\end{align*}
As explained in Section \ref{ssec.min-balan-ineq}, the standard inequalities for the
min-balanced systems are positive multiples of these, which implies the
result in ${\dv R}^{\caP\setminus\{\emptyset\}}$. The coefficients with the
empty set are immaterial since they are determined by the o-standardization
condition.
\qed \end{proof}

Let us give a simple example of a reducible system, which illustrates Lemma~\ref{lem.irreducible}.

\begin{example}\label{exa.reduction}
Put $N=\{a,b,c,d\}$. Let $\calB = \{\, \{a\},\{b\},\{c\}\, \}$ be a set system on a strict subset $M=\{a,b,c\}$ of $N$. The corresponding inequality
\eqref{eq.B-ineq} is
$$
\m (\{a,b,c\}) -\m (\{a\})  -\m (\{b\})  -\m (\{c\}) +2\cdot\m (\emptyset) ~\geq~ 0\,.
$$
Take $A=\{a,b\}$ and observe that $\chi_{\{a,b\}}=\chi_{\{a\}}+\chi_{\{b\}}$, which gives
$\chi_{A}\in\mbox{cone}\,\{\chi_{S}\,:\, S\in\calB_{A}\}$ with $\calB_{A}=\{\,\{a\},\{b\}\,\}$.
We also have $\chi_{\{a,b,c\}}=\chi_{\{a,b\}}+\chi_{\{c\}}$ which implies that
$$
\chi_{M}\in\mbox{cone}\,\{\chi_{T}\,:\, T\in\{A\}\cup (\calB\setminus\calB_{A}) \}\subseteq
\mbox{cone}\,\{\chi_{T}\,:\, T\in\{A\}\cup (\calB\setminus\{B\}) \}
$$
for both $B\in\calB_{A}=\{\, \{a\},\{b\}\, \}$. One has $\calC=\calB_{A}$ and, for both $B=\{a\}$ and $B=\{b\}$,
the respective $\calD$ is $\{A\}\cup (\calB\setminus\calB_{A})=\{\, \{a,b\},\{c\}\,\}$.
The inequalities \eqref{eq.B-ineq} for $\calC$ and $\calD$ are
\begin{align*}
\m (\{a,b\}) -\m (\{a\})  -\m (\{b\})  +\m (\emptyset) &\geq 0\,,\\
\m (\{a,b,c\}) -\m (\{a,b\})  -\m (\{c\}) +\m (\emptyset) &\geq 0\,.
\end{align*}
Their sum is the above inequality \eqref{eq.B-ineq} corresponding to ${\calB}$.
\end{example}
\noindent

A more complicated example of a reducible min-balanced system is as follows.

\begin{example}
Put $N=\{a,b,c,d,e\}$ and consider a set system
\[
\calB = \{\,\{a,b\},\{c,e\},\{d,e\},\{a,c,d\},\{b,c,d\}\,\}.
\]
It is easy to show that $\calB$ is min-balanced on $N$ using Lemma~\ref{lem.equiv-min-bal}.
To show that $\calB$ is reducible according to Definition \ref{def.irreducible} we take $A\coloneqq \{a,b,c,d\}$ and observe that
\[
\calB_A = \{\,\{a,b\},\{a,c,d\},\{b,c,d\}\,\}.
\]
It follows that $\chi_A$ is in the conic hull of  $\{\chi_{S}\,:\, S\in\calB_{A}\}$ because of
$$
\chi_A = \frac 12\cdot (\chi_{\{a,b\}} + \chi_{\{a,c,d\}} + \chi_{\{b,c,d\}}).
$$
Consider, for example, $B\coloneqq \{a,c,d\}\in \calB_A$.
Also $\chi_{\bigcup\calB}=\chi_N$  is in the conic hull of $\{\chi_{T}\,:\, T\in\{A\}\cup(\calB\setminus \{B\})\}$
since
\[
\chi_N = \frac 12\cdot (\chi_{\{a,b,c,d\}} +\chi_{\{a,b\}} + \chi_{\{c,e\}} + \chi_{\{d,e\}}).
\]
Hence, $\calB$ is reducible.
\end{example}

The next example shows that some sets in an irreducible system
can be inclusion comparable.

\begin{example}
Put $N=\{a,b,c,d,e\}$ and consider a set system
$$
\calB = \{\,\{a,b\},\{a,c,d\},\{a,c,e\},\{a,b,d,e\},\{b,c,d,e\}\,\}\,.
$$
Then one has
$$
\chi_N = \frac{1}{4}\cdot (\chi_{\{a,b\}}  + \chi_{\{a,c,d\}} + \chi_{\{a,c,e\}} + \chi_{\{a,b,d,e\}})
+\frac{1}{2}\cdot \chi_{\{b,c,d,e\}}\,,
$$
which allows one to show, using Lemma~\ref{lem.equiv-min-bal}, that $\calB$ is min-balanced on $N$.

Owing to (the first claim in) Lemma~\ref{lem.irreducible} it is enough to test sets $A\subset\bigcup\calB=N$
with $|A|\geq 2$ and $A\not\in\calB$ whether $A=\bigcup\calB_{A}$ with $\calB_{A}=\{S\in\calB\,:\ B\subset A\}$ and the conditions from Definition \ref{def.irreducible} hold.
First observe that there is no set $A\subset N$ with $2\leq |A|\leq 3$ satisfying $A=\bigcup\calB_{A}$.
Second, consider $A\subset N$ with $|A|=4$ and $A\not\in\calB$ and distinguish three cases:
\begin{enumerate}
\item If $A=\{a,b,c,d\}$ then $\calB_{A}=\{\,\{a,b\},\{a,c,d\}\,\}$ and one has $A=\bigcup\calB_{A}$; however,
the vector $\chi_A$ is not in the conic hull of $\{\chi_{S}\,:\, S\in\calB_{A}\}$.
\item If $A=\{a,b,c,e\}$ then $\calB_{A}=\{\,\{a,b\},\{a,c,e\}\,\}$ and the arguments are same
as in the previous case.
\item If $A=\{a,c,d,e\}$ then $\calB_{A}=\{\,\{a,c,d\},\{a,c,e\}\,\}$ and, again, $\chi_A$ is not in
the conic hull of $\{\chi_{S}\,:\, S\in\calB_{A}\}$.
\end{enumerate}
In any case, the first condition in Definition \ref{def.irreducible} not valid for any such set $A$,
which implies that the min-balanced system $\calB$ is irreducible.
\end{example}

The following result is a direct consequence of Lemma~\ref{lem.irreducible}.
\begin{corollary}\label{cor.irreducible}\rm
Given a reducible min-balanced system $\calB$, the corresponding inequality is a conic combination of
inequalities which correspond to other min-balanced systems $\calB^{\prime}$ with $\bigcup\calB^{\prime}\subseteq\bigcup\calB$.
In particular, it is a combination of inequalities which correspond to irreducible min-balanced systems on subsets of $\bigcup\calB$.
\end{corollary}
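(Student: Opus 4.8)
The plan is to derive Corollary~\ref{cor.irreducible} as an immediate consequence of Lemma~\ref{lem.irreducible}, using induction on the size of the carrier $\bigcup\calB$. First I would observe that Lemma~\ref{lem.irreducible} already does the main work for the first assertion: given a reducible min-balanced system $\calB$, Definition~\ref{def.irreducible} guarantees the existence of $A\subset\bigcup\calB$ and $B\in\calB_A$ satisfying conditions (1) and (2), and the \emph{Moreover} part of Lemma~\ref{lem.irreducible} then produces two min-balanced systems $\calC$ (on $A$) and $\calD$ (on $\bigcup\calB$) such that the inequality \eqref{eq.B-ineq} for $\calB$ is a conic combination of the inequalities for $\calC$ and $\calD$. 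Since $A\subset\bigcup\calB$ gives $\bigcup\calC=A\subsetneq\bigcup\calB$, and since $\calD$ is a strict modification of $\calB$ (it replaces $B$ by the set $A=\bigcup\calB_A$, and by Lemma~\ref{lem.irreducible} one has $A\notin\calB$), both $\calC$ and $\calD$ are \emph{other} min-balanced systems with carrier contained in $\bigcup\calB$. This establishes the first statement verbatim.

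For the second ("In particular") statement I would argue by induction on $|\bigcup\calB|$. The base case is handled by noting that a min-balanced system on a carrier of minimal size is forced to be irreducible, since reducibility requires a proper nonempty subset $A$ with $|A|\geq 2$ and $A=\bigcup\calB_A$, which cannot occur once the carrier is too small (the smallest non-trivial carriers admit no such $A$). For the inductive step, I would take a reducible $\calB$ and apply the first assertion to write its inequality as a conic combination of the inequalities for $\calC$ and $\calD$. Now I would split into cases: if $\calC$ (respectively $\calD$) is irreducible, it already contributes a term of the desired form; if it is reducible, then since its carrier is contained in $\bigcup\calB$, I would invoke the induction hypothesis to rewrite \emph{its} inequality as a conic combination of inequalities for irreducible min-balanced systems on subsets of its carrier, hence on subsets of $\bigcup\calB$. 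Substituting these expansions back and collecting terms yields the claimed conic combination for $\calB$.

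The one subtlety requiring care is the carrier bookkeeping for $\calD$: although $\bigcup\calD=\bigcup\calB$ (not a strict subset), the induction must still terminate. The resolution is that the induction should not be on the carrier of the system being expanded but rather run \emph{within} a fixed ambient carrier $M=\bigcup\calB$, using a secondary well-founded measure — for instance, the observation that each reduction step strictly decreases the total number of sets lying strictly inside proper subsets, or more cleanly, that repeated reduction of $\calD$ is itself controlled because any reduction of $\calD$ again splits off a piece with strictly smaller carrier while the remaining full-carrier piece has strictly fewer "reducible witnesses." I expect the cleanest formulation is a double induction, primarily on $|\bigcup\calB|$ for the system $\calC$ side and a secondary induction establishing that the full-carrier systems $\calD$ arising in succession cannot recur indefinitely; alternatively one fixes the total descent by noting that each application replaces one set $B$ by the strictly larger set $A$, and the sum $\sum_{S\in\calB}|S|$ (bounded above) strictly increases, so the process halts.

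The main obstacle, then, is precisely ensuring termination of the recursive reduction when the carrier of $\calD$ equals that of $\calB$; everything else is a direct application of Lemma~\ref{lem.irreducible} together with the transitivity of conic combinations. Once a monotone integer-valued progress measure (such as $\sum_{S\in\calB}|S|$) is exhibited, the induction closes and the corollary follows without further computation.
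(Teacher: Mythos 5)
Your overall strategy --- derive the first claim directly from Lemma~\ref{lem.irreducible}, and obtain the ``in particular'' part by iterating the reduction, with an outer induction on the carrier size and an inner termination argument for the full-carrier systems $\calD$ --- is exactly the structure the paper has in mind (the paper states the corollary as a direct consequence of Lemma~\ref{lem.irreducible} without spelling out termination, so you are right that this is the only point needing care). However, your concrete progress measure is wrong. You assert that each reduction step ``replaces one set $B$ by the strictly larger set $A$'', so that $\sum_{S\in\calB}|S|$ strictly increases. But in Lemma~\ref{lem.irreducible} the system $\calD$ is only a \emph{subsystem} of $\{A\}\cup(\calB\setminus\{B\})$: every $S\in\calB_A\setminus\{B\}$ whose coefficient $\beta_S$ vanishes is discarded as well. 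Example~\ref{exa.reduction} already defeats your measure: there $\calB=\{\{a\},\{b\},\{c\}\}$, $A=\{a,b\}$, $B=\{a\}$, and $\calD=\{\{a,b\},\{c\}\}$, so $\sum_{S}|S|$ stays equal to $3$ (and with larger discarded sets it can even strictly decrease). So, as written, your inner induction does not close.

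The gap is repairable with information you already have. Since $A\notin\calB$, one always has $|\calD|\le|\calB|$, and $|\calD|=|\calB|$ forces $\calD=\{A\}\cup(\calB\setminus\{B\})$, in which case your sum does strictly increase; if instead $|\calD|<|\calB|$, the number of sets strictly drops. Hence the pair $\bigl(|\calB|,\,-\sum_{S\in\calB}|S|\bigr)$ decreases lexicographically at every step, and both entries range over a finite set of integers, so the recursion terminates. Alternatively, a single measure works: with $M=\bigcup\calB$ fixed, $\mu(\calB')\coloneqq |M|\cdot|\calB'|-\sum_{S\in\calB'}|S|$ is a non-negative integer, and a short computation --- using that every discarded set lies in $\calB_A$, hence has size at most $|A|-1$, and that $|B|<|A|\le|M|$ --- shows $\mu(\calD)<\mu(\calB)$. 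With either repair, your double induction (outer on $|\bigcup\calB|$ handling $\calC$, inner on the measure handling $\calD$) goes through, and the rest of your argument --- the first claim, the base case, and the substitution of expansions via transitivity of conic combinations --- is correct.
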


\section{Main result}\label{sec.main-result}
The main result of this paper, proved in Section \ref{ssec.basic-total}, is as follows.

\begin{theorem}\label{thm.total}
Let $|N|\geq 2$. The facet-defining inequalities for\/ $T(N)$ are exactly those
which correspond to non-trivial irreducible min-balanced systems $\calB$ with $\bigcup\calB\subseteq N$.
\end{theorem}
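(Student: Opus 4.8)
The plan is to establish the two inclusions separately: every facet-defining inequality for $T(N)$ corresponds to a non-trivial irreducible min-balanced system, and conversely every such system yields a facet. The cone $T(N)$ is defined by the intersection $T(N)=\bigcap_{\emptyset\neq S\subseteq N}\{\,\m:\m_S\in B(S)\,\}$, so by Lemma~\ref{lem.balanced-cone} the cone is cut out by the inequalities $\llangle\alpha_{\calB},\m\rrangle\geq 0$ ranging over all non-trivial min-balanced systems $\calB$ with $\bigcup\calB\subseteq N$ (a min-balanced system on a subset $S$ induces a valid inequality for the subgame $\m_S$, hence for $\m$ itself). This gives a finite, explicit outer description of $T(N)$, and the task reduces to pruning this list down to its irredundant members.

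\medskip\noindent\textbf{Sufficiency of irreducibility being removable.} First I would dispose of the reducible systems. By Corollary~\ref{cor.irreducible}, the inequality attached to any reducible min-balanced system $\calB$ is a non-negative conic combination of inequalities attached to other min-balanced systems on subsets of $\bigcup\calB$, and iterating, a combination of inequalities for \emph{irreducible} systems. A valid inequality that is a positive conic combination of other valid inequalities in the description can never define a facet (its zero-set contains the intersection of the zero-sets of the summands, hence has codimension $\geq 2$ unless the summands are parallel, which \eqref{eq.coef-def} forbids for distinct systems). Therefore no reducible system defines a facet, and the facets must all come from irreducible systems.

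\medskip\noindent\textbf{Necessity: each irreducible system gives a facet.} This is the substantive direction and I expect it to be the main obstacle. The strategy is to show that for an irreducible $\calB$ the face $F_\calB=\{\m\in T(N):\llangle\alpha_\calB,\m\rrangle=0\}$ has dimension exactly $\dim T(N)-1$. Since $T(N)=L(N)\oplus\mathcal{T}_\ell(N)$ is full-dimensional in $\dv R^{\caP}$, I would exhibit $2^{|N|}-1$ affinely independent games lying in $F_\calB$ (equivalently, show the only o-standardized valid inequalities vanishing wherever $\alpha_\calB$ vanishes are positive multiples of $\alpha_\calB$). The natural source of such games is min-representations: a totally balanced game tight for the inequality $\llangle\alpha_\calB,\m\rrangle=0$ is one whose values on $M=\bigcup\calB$ and on the members of $\calB$ satisfy the balancedness equality with equality, and I would build a large family of such games as minima of suitably many additive games $\sum_{i\in S}x_i$, using the incidence vectors $\{\chi_S:S\in\calB\}$ together with the standard basis games $\upsilon^{\uparrow A}$ (which lie in $\mathcal{T}_\ell(N)$ by the proof of Lemma~\ref{lem.zero-allow2}) to fill out the required dimension.

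\medskip\noindent\textbf{Where the difficulty lies.} The delicate point is that the outer description of $T(N)$ above is indexed by systems on \emph{all} subsets of $N$, not only on $N$ itself, so redundancy can arise across different carriers; irreducibility is precisely the combinatorial condition (Definition~\ref{def.irreducible}) that rules out the specific way a subgame inequality can be synthesised from a smaller-carrier inequality and a ``one-step'' inequality, as made explicit in Lemma~\ref{lem.irreducible}. The heart of the proof is thus to argue that \emph{irreducibility is not merely necessary but sufficient} for irredundancy: if $\calB$ is irreducible, no conic combination of the other standardized inequalities reproduces $\alpha_\calB$. I would establish this by a dimension count on the face, organising the argument through a sequence of lemmas (anticipated by the phrase ``together with a series of lemmas leading to its proof''), the crux being to verify that the tight games I construct are genuinely totally balanced and affinely span a hyperplane, so that the only further constraint active on all of them is $\alpha_\calB$ itself.
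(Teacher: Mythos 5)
Your first half is sound: the claim that reducible systems cannot define facets follows, as you say, from Corollary~\ref{cor.irreducible} together with the standard fact that for a full-dimensional cone a facet-defining inequality cannot be a conic combination of valid inequalities that are not positive multiples of it (the paper uses exactly this, via Corollary~\ref{cor.irreducible}, inside the proof of Lemma~\ref{lem.total-decomp}). But the substantive direction --- that every non-trivial \emph{irreducible} min-balanced system does define a facet --- is not proved in your proposal; it is only announced. You say you would exhibit enough tight totally balanced games (built from min-representations and the games $\muA$) to make the face $F_{\calB}$ a hyperplane section, and that ``the crux'' is to verify these games are totally balanced and span the right dimension, but no such family is constructed, no verification is sketched, and --- decisively --- nothing in your outline indicates \emph{where irreducibility of $\calB$ would enter} such a construction. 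Since irreducibility is exactly the hypothesis separating facets from non-facets, a proof in which it plays no identifiable role cannot be complete; what you have written is a restatement of what must be shown, not an argument for it. (A minor additional slip: for a cone, $2^{|N|}-1$ \emph{affinely} independent tight games only bound the face dimension below by $2^{|N|}-2$; you need $2^{|N|}-1$ linearly independent ones.)

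The paper closes this gap by working entirely on the dual side, avoiding any primal construction of tight games. It introduces the polytope $\Delta$ (Definition~\ref{def.Delta-cones}) whose conic hull is the dual cone of $T(N)$ (Lemma~\ref{lem.total-dual}), and proves in Lemma~\ref{lem.total-decomp} that the extreme points of $\Delta$ are precisely the normalized coefficient vectors $\alpha_{\calB}(\emptyset)^{-1}\cdot\alpha_{\calB}$ of non-trivial irreducible min-balanced systems. Irreducibility is used there contrapositively: assuming $\tilde{\alpha}_{\calB}$ is a nontrivial convex combination $\sum_i \gamma_i\tilde{\alpha}_{\calB_i}$, one first forces $\calB_i\subseteq{\cal P}(M)$ using the test games $\muR\in T(N)$, then locates an inclusion-minimal set $A\in\calB_k\setminus\calB$ with $A=\bigcup\calB_k$, and finally perturbs along $\theta^{\varepsilon}=\tilde{\alpha}_{\calB}+\varepsilon\,(\tilde{\alpha}_{\calB}-\theta_k)$ up to the boundary of $\Delta_M$ to produce exactly the pair $(A,B)$ witnessing reducibility in Definition~\ref{def.irreducible}. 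The theorem then follows from the anti-isomorphism of face lattices of mutually dual cones: facets of $T(N)$ correspond to extreme rays of the cone spanned by $\Delta$. If you wish to salvage your primal plan, this perturbation argument is the missing mechanism you would have to replicate, and it is considerably more delicate than a dimension count.
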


Note that $T(N)$ is not closed under reflection (Example~\ref{exa.total-balanced}); thus, one cannot expect
that the conjugate inequality to a facet-defining inequality for $T(N)$ is also facet-defining.

\subsection{Characterization of the balanced and exact cones}\label{sec.dual-balanced}
We are going to show that the sets $B(N)$ and $E(N)$ are rational polyhedral cones, by simplifying the arguments from \cite[Section 3]{Csoka2011Balancedness}.
The following notation will be instrumental.

\begin{definition}\rm\label{def.Theta-cones}
Assume $|N|\geq 2$. For any non-empty set $D\subseteq N$, we put
\begin{align*}
\Theta^{N}_{D} \coloneqq \{\,\, \theta\in {\dv R}^{\caP}\, :\
& \theta (S)\leq 0 ~~\mbox{for any $S\subseteq N$,  $S\not\in\{\emptyset, D, N\}$},\\
&\sum_{T\subseteq N} \theta (T)=0 ~~~\mbox{and}~ \sum_{T\subseteq N:\, i\in T} \theta (T)=0 ~~\mbox{for any $i\in N$}\,\}.
\end{align*}
\end{definition}

In words, $\Theta^{N}_{D}$ is the set of o-standardized  vectors which are non-positive outside $\{\emptyset, D, N\}$.

\begin{lemma}\label{lem.dual-cones}\rm
If $|N|\geq 2$ and $\emptyset\neq D\subseteq N$, then  $\Theta^{N}_{N}\subseteq \Theta^{N}_{D}$
and $\theta(N),\theta(\emptyset)\geq 0$, for any $\theta\in\Theta^{N}_{D}$.
The only $\theta\in\Theta^{N}_{D}$ satisfying $\theta(N)+\theta(\emptyset)\geq 0$ with equality is
the zero game $\theta=\b0$.
In particular, $\Theta^{N}_{D}$ is a pointed polyhedral cone containing $\Theta^{N}_{N}$ and every non-zero $\theta\in\Theta^{N}_{N}$ satisfies both $\theta(N)>0$ and $\theta(\emptyset)>0$.
\end{lemma}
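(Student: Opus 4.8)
The plan is to derive all four assertions directly from the two o-standardization equalities $\sum_{T\subseteq N}\theta(T)=0$ and $\sum_{T\subseteq N:\,i\in T}\theta(T)=0$ built into Definition~\ref{def.Theta-cones}, together with the non-positivity constraints. The inclusion $\Theta^{N}_{N}\subseteq\Theta^{N}_{D}$ is immediate: both cones impose the same equalities, while the non-positivity requirement defining $\Theta^{N}_{N}$ (on every $S\notin\{\emptyset,N\}$) is stronger than the one defining $\Theta^{N}_{D}$ (on every $S\notin\{\emptyset,D,N\}$), so every $\theta\in\Theta^{N}_{N}$ already lies in $\Theta^{N}_{D}$.

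To obtain the signs $\theta(N),\theta(\emptyset)\geq 0$, I would first record the complementary equality $\sum_{T:\,i\notin T}\theta(T)=0$, obtained by subtracting $\sum_{T:\,i\in T}\theta(T)=0$ from $\sum_{T}\theta(T)=0$. Choosing an index $i\in\nmD$ (any $i\in N$ will do when $D=N$), the only member of $\{\emptyset,D,N\}$ containing $i$ is $N$, so $\sum_{T:\,i\in T}\theta(T)=0$ rearranges to $\theta(N)=\sum_{T:\,i\in T,\,T\neq N}(-\theta(T))$, a sum of non-negative terms; hence $\theta(N)\geq 0$. Symmetrically, choosing $i\in D$ (which exists since $D\neq\emptyset$), the only member of $\{\emptyset,D,N\}$ avoiding $i$ is $\emptyset$, so the complementary equality gives $\theta(\emptyset)=\sum_{T:\,i\notin T,\,T\neq\emptyset}(-\theta(T))\geq 0$.

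For the equality case and pointedness, since $\theta(N),\theta(\emptyset)\geq 0$, the condition $\theta(N)+\theta(\emptyset)=0$ forces $\theta(N)=\theta(\emptyset)=0$. I would then let the index range over all of $N$. For each $j\in D$ the vanishing of $\theta(\emptyset)$ in the complementary equality forces $\theta(T)=0$ for every $T\not\ni j$ with $T\neq\emptyset$; taking the union over $j\in D$ yields $\theta(T)=0$ whenever $T\not\supseteq D$. For each $j\in\nmD$ the vanishing of $\theta(N)$ forces $\theta(T)=0$ for every $T\ni j$ with $T\neq N$; the union over $j\in\nmD$ yields $\theta(T)=0$ whenever $T\not\subseteq D$. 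The only coordinate left uncovered is $T=D$, and $\sum_{T}\theta(T)=0$ then forces $\theta(D)=0$ as well, so $\theta=\b0$. This shows that the linear functional $\theta\mapsto\theta(N)+\theta(\emptyset)$ is non-negative on $\Theta^{N}_{D}$ and vanishes only at $\b0$, which is precisely the statement that $\Theta^{N}_{D}$ is pointed; polyhedrality is clear because $\Theta^{N}_{D}$ is cut out by finitely many homogeneous linear (in)equalities. Specializing to $D=N$, the same vanishing argument run from $\theta(N)=0$ alone (respectively $\theta(\emptyset)=0$ alone) already forces $\theta=\b0$, so every non-zero $\theta\in\Theta^{N}_{N}$ must satisfy $\theta(N)>0$ and $\theta(\emptyset)>0$.

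The main obstacle I anticipate is the bookkeeping in the equality step: one must check that the two families of vanishing conclusions jointly pin down every coordinate except $\theta(D)$, and handle the degenerate case $D=N$, where $\nmD=\emptyset$ makes the second family vacuous but the single remaining coordinate $T=D=N$ is already covered by $\theta(N)=0$.
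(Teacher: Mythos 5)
Your proof is correct and takes essentially the same route as the paper's own argument: both derive the sign conditions $\theta(N),\theta(\emptyset)\geq 0$ by choosing $i\in N\setminus D$ resp.\ $j\in D$ in the o-standardization equalities, force coordinatewise vanishing in the equality case (leaving only $\theta(D)$, killed by $\sum_{T}\theta(T)=0$), and treat $\Theta^{N}_{N}$ separately to get the strict inequalities for non-zero elements. Your explicit handling of the degenerate case $D=N$ and the remark that pointedness follows from the non-negative functional $\theta\mapsto\theta(N)+\theta(\emptyset)$ vanishing only at $\b0$ are just slightly more spelled-out versions of what the paper leaves implicit.
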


\begin{proof}
The inclusion $\Theta^{N}_{N}\subseteq \Theta^{N}_{D}$ is evident. Thus,
assume without loss of generality that $D\subset N$. Then $\theta (N)\geq 0$
is a valid inequality for $\theta\in\Theta^{N}_{D}$. Indeed, take $i\in \nmD$ and realize that
$\theta(N)=-\sum_{T\subset N:\, i\in T} \theta (T)\geq 0$.
Analogously, for any $j\in N$,
$$
\sum_{S\subseteq N\smin\{j\}} \theta (S)=\sum_{S\subseteq N} \theta (S)-\sum_{T\subseteq N:\, j\in T} \theta (T)=0-0=0\,,
$$
which implies that $\theta (\emptyset)\geq 0$
is a valid inequality for $\theta\in\Theta^{N}_{D}$; it suffices to take $j\in D$ and write
$\theta(\emptyset)=-\sum_{\emptyset\neq S\subseteq N\smin\{j\}} \theta (S)\geq 0$.
Thus, $\theta(N)+\theta(\emptyset)\geq 0$ is a valid inequality for $\theta\in\Theta^{N}_{D}$.

To show that the only $\theta\in\Theta^{N}_{D}$ satisfying $\theta(N)+\theta(\emptyset)=0$ is $\theta =\b0$,
realize that both $\theta(N)=0$ and $\theta(\emptyset)=0$. The identity $\theta(N)=0$
implies by $0=\theta(N)=-\sum_{T\subset N:\, i\in T} \theta (T)\geq 0$, for $i\in \nmD$,
that $\theta(T)$ vanishes for any $T\subseteq N$ intersecting $\nmD$.
Analogously, the identity $\theta(\emptyset)=0$ implies by
$0=\theta(\emptyset)=-\sum_{\emptyset\neq S\subseteq N\smin\{j\}} \theta (S)\geq 0$,
for $j\in D$, that $\theta(S)$ vanishes for any $S\subset D$. Hence,
$0=\sum_{T\subseteq N} \theta (T)=\theta(D)$ and $\theta=\b0$.

The proof of the fact that the only $\theta\in\Theta^{N}_{N}$ satisfying $\theta(N)\geq 0$ with
equality is $\theta =\b0$ is analogous: for any $i\in N$ we get
$0=\theta(N)=-\sum_{T\subset N:\, i\in T} \theta (T)\geq 0$. Hence, $\theta(T)=0$ for any $\emptyset\neq T\subseteq N$
and $\theta(\emptyset)=-\sum_{\emptyset\neq T\subseteq N} \theta (T)=0$. The same with
$\theta\in\Theta^{N}_{N}$ satisfying $\theta(\emptyset)=0$: take $j\in N$ and get
$0=\theta(\emptyset)=-\sum_{\emptyset\neq S\subseteq N\smin\{j\}} \theta (S)\geq 0$,
which implies that $\theta(S)$ vanishes for $S\subset N$ and, consequently,
$\theta(N)=-\sum_{S\subset N} \theta (S)=0$. Thus, every
non-zero $\theta\in\Theta^{N}_{N}$ satisfies both $\theta(N)>0$ and $\theta(\emptyset)>0$.
\qed \end{proof}

To characterize the sets $B(N)$ and $E(N)$, we are going to use the following
criterion for feasibility of a system of linear constraints involving both inequalities and equalities.
In the statement, $C_{I}$ denotes the submatrix of a matrix $C$ specified by a set of rows $I$,
$b_{I}$ is the subvector of a vector $b$ determined by its components in $I$, $\0_{I}$ is  the zero vector in ${\dv R}^{I}$,
and $C^{\top}$/$b^{\top}$ denotes the transpose of a matrix $C$/vector $b$. Inequalities and equalities for vectors are
understood coordinatewise.

\begin{lemma}\label{lem.feasibility-criterion}\rm
Let $C\in {\dv R}^{L\times N}$ be a real matrix and $b\in {\dv R}^{L}$ a real column vector, where
$L=I\cup E$ with disjoint sets $I$ and $E$. Then the condition
$$
\exists\, x\in {\dv R}^{N}\qquad \mbox{such that both~~ $C_{I}x\leq b_{I}$ ~and ~$C_{E}x=b_{E}$}
$$
is equivalent to the condition
$$
\forall\, \lambda\in {\dv R}^{L}\qquad
\mbox{[\,$\lambda_{I}\geq \0_{I}$ ~and~ $C^{\top}\lambda=\0_{N}$\,]} ~~~\mbox{implies~~ $b^{\top}\lambda\geq 0$\,.}
$$
\end{lemma}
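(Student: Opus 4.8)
The statement is a standard theorem of the alternative (a Farkas-type lemma) for a mixed system of linear inequalities and equalities, so the plan is to reduce it to the pure-inequality Farkas lemma, which I will take as a known fact of polyhedral geometry (see, e.g., \cite{Schrijver98}): the system $Ax\leq c$ is feasible if and only if every $y\geq\b0$ with $A^{\top}y=\b0$ satisfies $c^{\top}y\geq 0$. Note that invoking this pure-inequality form is not circular, since the content of the present lemma is precisely the incorporation of the equality block $E$.

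First I would dispatch the direction asserting that feasibility implies the dual condition, which is elementary. Assuming some $x\in{\dv R}^{N}$ with $C_{I}x\leq b_{I}$ and $C_{E}x=b_{E}$, and given any $\lambda\in{\dv R}^{L}$ with $\lambda_{I}\geq\0_{I}$ and $C^{\top}\lambda=\0_{N}$, I split $\lambda$ into its $I$- and $E$-parts and write $b^{\top}\lambda=b_{I}^{\top}\lambda_{I}+b_{E}^{\top}\lambda_{E}$. Nonnegativity of $\lambda_{I}$ together with $C_{I}x\leq b_{I}$ gives $b_{I}^{\top}\lambda_{I}\geq (C_{I}x)^{\top}\lambda_{I}$, while the equality $C_{E}x=b_{E}$ gives $b_{E}^{\top}\lambda_{E}=(C_{E}x)^{\top}\lambda_{E}$ for $\lambda_{E}$ of arbitrary sign; adding these and using $C^{\top}\lambda=\0_{N}$ yields $b^{\top}\lambda\geq (Cx)^{\top}\lambda=x^{\top}(C^{\top}\lambda)=0$.

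The substantive direction is the converse, which I would prove by contraposition: assuming the system is infeasible, I must produce a certificate $\lambda$ with $\lambda_{I}\geq\0_{I}$, $C^{\top}\lambda=\0_{N}$ and $b^{\top}\lambda<0$. The key device is to encode each equality row of $C_{E}x=b_{E}$ as the two opposite inequalities $C_{E}x\leq b_{E}$ and $-C_{E}x\leq -b_{E}$, obtaining a purely inequality-constrained system $\tilde{C}x\leq\tilde{b}$ whose stacked blocks are $C_{I},C_{E},-C_{E}$ with right-hand sides $b_{I},b_{E},-b_{E}$. This system is infeasible exactly when the original one is. The pure-inequality Farkas lemma then supplies nonnegative multipliers $y_{I},y^{+}_{E},y^{-}_{E}\geq\b0$ with $C_{I}^{\top}y_{I}+C_{E}^{\top}y^{+}_{E}-C_{E}^{\top}y^{-}_{E}=\0_{N}$ and $b_{I}^{\top}y_{I}+b_{E}^{\top}y^{+}_{E}-b_{E}^{\top}y^{-}_{E}<0$. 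Setting $\lambda_{I}\coloneqq y_{I}$ (nonnegative) and $\lambda_{E}\coloneqq y^{+}_{E}-y^{-}_{E}$ (of unrestricted sign, as is allowed on the equality block) recombines these into a single $\lambda$ with all three required properties, contradicting the dual condition.

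The proof is essentially bookkeeping once the right form of Farkas' lemma is fixed. The only point requiring a little care is the sign handling: the multiplier $\lambda_{E}$ attached to the equality rows is free precisely because it arises as the difference $y^{+}_{E}-y^{-}_{E}$ of the two nonnegative multipliers produced by splitting the equality into opposing inequalities. I therefore expect no genuine obstacle beyond stating the pure-inequality Farkas lemma in a form matching the inequality direction $Ax\leq c$ used above.
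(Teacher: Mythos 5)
Your proof is correct, but it follows a genuinely different route from the paper, which in fact offers no proof at all: immediately after the statement, the paper simply refers the reader to Theorem~9.2 of Chv\'{a}tal's book \cite{Chvatal1983LP}, where the result appears in contraposed form as an infeasibility criterion for mixed systems of linear equalities and inequalities. You instead derive the lemma from the inequality-only Farkas lemma (the system $Ax\leq c$ is feasible iff every $y\geq \0$ with $A^{\top}y=\0$ satisfies $c^{\top}y\geq 0$; this is indeed standard, e.g.\ in \cite{Schrijver98}), using the classical device of splitting each equality row of $C_{E}x=b_{E}$ into the two opposing inequalities $C_{E}x\leq b_{E}$ and $-C_{E}x\leq -b_{E}$, and then recombining the two nonnegative multiplier blocks into a single sign-free multiplier $\lambda_{E}=y^{+}_{E}-y^{-}_{E}$ on the equality block. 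Both halves of your argument are sound: the easy direction is the usual weak-duality computation, and the contrapositive direction correctly transports the infeasibility certificate of the split system back to a certificate $\lambda$ with $\lambda_{I}\geq \0_{I}$, $C^{\top}\lambda=\0_{N}$ and $b^{\top}\lambda<0$; your invocation of the pure-inequality form is also not circular, since that form is an independently established classical result rather than a special case you assume of the very statement being proved. What the paper's approach buys is brevity and deference to a standard reference; what yours buys is a self-contained verification, modulo a more primitive and widely known theorem, which makes transparent that the mixed equality/inequality version used in Lemma~\ref{lem.exact-cone} is a routine corollary of the inequality-only Farkas lemma.
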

The reader can find this feasibility criterion in \cite[Theorem~9.2]{Chvatal1983LP}
in slightly modified formulation, namely as the equivalence of negations of the conditions from Lemma~\ref{lem.feasibility-criterion}.

\begin{lemma}\label{lem.exact-cone}\rm
Assume $|N|\geq 2$. For any set function $\m\in {\dv R}^{\caP}$, we have
\begin{enumerate}
\item $\m\in B(N)$ ~~iff~~ $\forall\, \theta\in \Theta^{N}_{N}\quad \llangle \theta,\m\rrangle \,\geq 0$\,,
\item $\m\in E(N)$ ~~iff~~ $\forall\:\emptyset\neq D\subseteq N ~~
\forall\, \theta\in \Theta^{N}_{D}\quad \llangle \theta,\m\rrangle\geq 0$\,.
\end{enumerate}
\end{lemma}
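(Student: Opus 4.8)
The plan is to reduce both equivalences to the feasibility criterion of Lemma~\ref{lem.feasibility-criterion}, reading balancedness as nonemptiness of the core and exactness as solvability, for each coalition $D$, of the core system with the $D$-constraint turned into an equality. One preliminary observation I would record is that for any o-standardized $\theta$ (i.e.\ $\theta$ satisfying \eqref{eq.o-stan}) one has $\llangle\theta,\m\rrangle=\llangle\theta,\mg\rrangle$; indeed $\m-\mg=\m(\emptyset)\cdot\muo\in L(N)$ and $\theta$ is orthogonal to $L(N)$. Hence every inequality $\llangle\theta,\m\rrangle\geq 0$ is really a statement about the game $\mg$, and the value $\theta(\emptyset)$, which is pinned down by o-standardization, is irrelevant because $\mg(\emptyset)=0$.

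For part (1), I would write the core $C(\mg)$ as the solution set of the single equality $\sum_{i\in N}x_i=\mg(N)$ together with the inequalities $\sum_{i\in S}x_i\geq\mg(S)$ for $\emptyset\neq S\subset N$ (the constraint for $S=\emptyset$ being vacuous). Applying Lemma~\ref{lem.feasibility-criterion} with the equality row indexed by $N$ and the inequality rows indexed by the proper nonempty coalitions, $C(\mg)\neq\emptyset$ becomes: for every family $(\lambda_S)$ with $\lambda_S\geq 0$ for proper nonempty $S$ and $\lambda_N$ free satisfying $\lambda_N\chi_N=\sum_{\emptyset\neq S\subset N}\lambda_S\chi_S$, one has $\lambda_N\mg(N)\geq\sum_{\emptyset\neq S\subset N}\lambda_S\mg(S)$. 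Setting $\theta(N)\coloneqq\lambda_N$, $\theta(S)\coloneqq-\lambda_S$ for proper nonempty $S$, and fixing $\theta(\emptyset)$ by o-standardization, the condition $C^{\top}\lambda=\0_N$ becomes exactly $\sum_{S\subseteq N:\,i\in S}\theta(S)=0$, the sign constraints $\lambda_S\geq 0$ become $\theta(S)\leq 0$ off $\{\emptyset,N\}$, and $\lambda_N\mg(N)-\sum\lambda_S\mg(S)=\llangle\theta,\mg\rrangle$. This is a bijection between the admissible multiplier families and $\Theta^{N}_{N}$, so $\m\in B(N)$ iff $\llangle\theta,\m\rrangle\geq 0$ for all $\theta\in\Theta^{N}_{N}$.

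For part (2), I would observe that $\mg$ is exact iff for every $\emptyset\neq D\subseteq N$ there is a core vector tight at $D$; the cases $D=\emptyset$ and $D=N$ add nothing beyond core nonemptiness and are absorbed by $D=N$ (where $\Theta^{N}_{N}\subseteq\Theta^{N}_{D}$). For fixed $\emptyset\neq D\subset N$ the existence of such a vector is the feasibility of the core system with the $D$-inequality replaced by the equality $\sum_{i\in D}x_i=\mg(D)$. Repeating the argument of part (1), now with two equality rows (indexed by $D$ and $N$), the multiplier $\lambda_D$ is free, so the associated $\theta(D)$ is unconstrained in sign; the resulting admissible multipliers are in bijection with $\Theta^{N}_{D}$, and again $\llangle\theta,\mg\rrangle$ equals the relevant $b^{\top}\lambda$. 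Taking the conjunction over all $D$ yields the claimed characterization of $E(N)$.

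The routine part is the sign and index bookkeeping; the only place needing care is verifying that turning the $D$-constraint into an equality (rather than keeping both the inequality and the tightness equation) does not change feasibility — it does not, since $\sum_{i\in D}x_i\geq\mg(D)$ is implied by the equality — and that this is precisely what frees the sign of $\theta(D)$ and produces $\Theta^{N}_{D}$ rather than $\Theta^{N}_{N}$. Lemma~\ref{lem.dual-cones} is not logically needed for the equivalences themselves; the whole weight rests on correctly instantiating Lemma~\ref{lem.feasibility-criterion} and on the identity $\llangle\theta,\m\rrangle=\llangle\theta,\mg\rrangle$ for o-standardized $\theta$.
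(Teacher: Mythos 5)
Your proposal is correct and follows essentially the same route as the paper's proof: reduce to the game $\mg$ via orthogonality of o-standardized vectors to $L(N)$, then apply Lemma~\ref{lem.feasibility-criterion} to the core system with equality rows for $N$ (and, for exactness, also for $D$), identifying the multiplier vectors with elements of $\Theta^{N}_{N}$ (resp.\ $\Theta^{N}_{D}$) by a sign flip on the non-empty coalitions. The paper packages both parts into a single claim \eqref{eq.tight-D} quantified over non-empty $D$, but the instantiation of the feasibility criterion and the bookkeeping are the same as yours.
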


Because each of the cones $\Theta^{N}_{D}$ is a rational polyhedral cone by definition, the cones
$\Theta^{N}_{N}$ and $\mbox{cone}\, (\bigcup_{\emptyset\neq D\subseteq N} \Theta^{N}_{D})$ are generated by finitely
many rational vectors. This, together with Lemma~\ref{lem.exact-cone}, implies that both
$B(N)$ and $E(N)$ are rational polyhedral cones.

\begin{proof}
One can assume without loss of generality that $\m\in {\cal G}(N)$. Indeed,
otherwise $\m$ can be replaced by its shifted version $\mg\in {\cal G}(N)$ given by \eqref{eq.zero-projection}.
Since $\m=\mg+\m(\emptyset)\cdot\muo$ and any $\theta\in\Theta^{N}_{D}$, where $\emptyset\neq D\subseteq N$, satisfies $\sum_{T\subseteq N} \theta (T)=0$, it follows that
$$
\llangle \theta,\m\rrangle=\llangle \theta,\mg\rrangle + \m(\emptyset)\cdot
\underbrace{\llangle \theta,\muo\rrangle}_{=0} = \llangle \theta,\mg\rrangle =
\sum_{\emptyset\neq S\subseteq N} \theta(S)\cdot\mg(S).
$$
Realize that both equivalences $\m\in B(N)\Leftrightarrow\mg\in B(N)$ and
$\m\in E(N)\Leftrightarrow\mg\in E(N)$ hold.

Given a non-empty $D\subseteq N$ and a game $\m\in {\cal G}(N)$,
the existence of an element of the core $C(m)$ achieving the bound for $D$ can be characterized in terms of $\Theta^{N}_{D}$:
\begin{equation}
[\,\exists\, x\in C(\m)\quad \sum_{i\in D} x_{i}=\m(D)\,]
~~\Leftrightarrow~~
[\,\forall\, \theta\in \Theta^{N}_{D}\quad
\underbrace{\sum_{\emptyset\neq S\subseteq N} \theta(S)\cdot\m(S)}_{\llangle\theta ,\m\rrangle}\geq 0\,]\,.
\label{eq.tight-D}
\end{equation}
The choice $D=N$ in \eqref{eq.tight-D} proves the first equivalence in Lemma~\ref{lem.exact-cone};
the second one follows analogously by applying \eqref{eq.tight-D}
with all $\emptyset\neq D\subseteq N$.

To verify \eqref{eq.tight-D} we formulate the condition on its left-hand side of \eqref{eq.tight-D} as the feasibility condition
for a system of linear constrains from Lemma~\ref{lem.feasibility-criterion}.
Let $L$ be ${\cal P}(N)\setminus\{\emptyset\}$, with $E$ consisting of $D$ and $N$,
and $I$ being the rest of ${\cal P}(N)\setminus\{\emptyset\}$;
that is, $|E|=2$ if $D\neq N$ and $|E|=1$ if $D=N$. The matrix $C$ will have
the entries $-\chi_{S}(i)$ for $S\in {\cal P}(N)\setminus\{\emptyset\}$ and $i\in N$;
the component of the vector $b$ for $S\in {\cal P}(N)\setminus\{\emptyset\}$
will be $-\m(S)$.
Thus, by Lemma~\ref{lem.feasibility-criterion}, the condition is equivalent to the
requirement that, for each $\lambda\in {\dv R}^{{\cal P}(N)\setminus\{\emptyset\}}$ such that
\begin{enumerate}
\item $\lambda (S)\geq 0$ for each $S\subseteq N$ with the exception of sets $\emptyset,D,N$, [$\Leftrightarrow ~~\lambda_{I}\geq \0_{I}$]
\item $-\sum\limits_{T\subseteq N:\, i\in T} \lambda (T)=\sum\limits_{\emptyset\neq T\subseteq N} -\chi_{T}(i)\cdot \lambda(T)=0$ for any $i\in N$, [$\Leftrightarrow ~~C^{\top}\lambda=\0_{N}$]
\end{enumerate}
we get $\sum\limits_{\emptyset\neq T\subseteq N} -m(T)\cdot \lambda (T)\geq 0$ 
[$\Leftrightarrow ~~b^{\top}\lambda\geq 0$].

We put $\theta(S)=-\lambda(S)$ for $\emptyset\neq S\subseteq N$ and
$\theta(\emptyset)=\sum_{\emptyset\neq T\subseteq N} \lambda(T)$ and observe that
the itemized conditions in terms of $\theta$ mean that $\theta\in \Theta^{N}_{D}$
and the conclusion that $\llangle \theta,\m\rrangle\geq 0$.
Thus, the requirement is nothing but the condition on the right-hand side of \eqref{eq.tight-D}.
\qed \end{proof}

\subsection{Extreme rays of the dual cone to the balanced cone}\label{ssec.basic-balanced}
The following observation will be used later as an auxiliary fact.

\begin{lemma}\label{lem.balanced-decomp}\rm
If $|N|\geq 2$ then $\theta\in{\dv R}^{\caP}$ generates an extreme ray of
$\Theta^{N}_{N}$ iff it is a positive multiple of the coefficient vector
$\alpha_{\calB}$ for a non-trivial min-balanced system $\calB$ on $N$.
\end{lemma}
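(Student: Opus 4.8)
The plan is to establish the claimed bijection between extreme rays of $\Theta^{N}_{N}$ and non-trivial min-balanced systems on $N$ by analyzing what it means for a vector $\theta \in \Theta^{N}_{N}$ to be extreme, translating the constraint structure into the language of incidence vectors. Recall that $\Theta^{N}_{N}$ consists of o-standardized vectors $\theta$ with $\theta(S) \leq 0$ for all $S \notin \{\emptyset, N\}$. By Lemma~\ref{lem.dual-cones}, every non-zero $\theta$ here has $\theta(N) > 0$ and $\theta(\emptyset) > 0$, so after scaling I may normalize $\theta(N) = 1$. The o-standardization condition \eqref{eq.o-stan} then reads $\sum_{S : i \in S} \theta(S) = 0$ for each $i$, which after isolating the $S = N$ term becomes $\sum_{S \subsetneq N,\, i \in S} (-\theta(S)) = 1$ for every $i \in N$. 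Writing $\lambda_S \coloneqq -\theta(S) \geq 0$ for $\emptyset \neq S \subsetneq N$, this says precisely that $\chi_N = \sum_{S} \lambda_S \chi_S$, i.e. the sets $S$ with $\lambda_S > 0$ form a \emph{balanced} system on $N$ in the sense of condition (i) of Lemma~\ref{lem.equiv-min-bal}.

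**First I would** prove the easier direction: that $\alpha_{\calB}$ for a non-trivial min-balanced $\calB$ on $N$ generates an extreme ray. That $\alpha_{\calB} \in \Theta^{N}_{N}$ follows directly from the construction in Section~\ref{ssec.min-balan-ineq} — it is o-standardized, equals $k > 0$ at $N$, is non-positive elsewhere, and vanishes off $\{N\} \cup \calB \cup \{\emptyset\}$. For extremality, suppose $\alpha_{\calB} = \theta_1 + \theta_2$ with $\theta_1, \theta_2 \in \Theta^{N}_{N}$. Since all three vectors are non-positive outside $\{\emptyset, N\}$ and $\alpha_{\calB}$ vanishes off $\{N\} \cup \calB \cup \{\emptyset\}$, each $\theta_j$ must also be supported on $\{N\} \cup \calB \cup \{\emptyset\}$. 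The o-standardization forces each $\theta_j$ to encode a balanced combination on the sets of $\calB$; by the linear independence of $\{\chi_S : S \in \calB\}$ (condition (ii) of Lemma~\ref{lem.equiv-min-bal}) the coefficients $\lambda_S$ realizing $\chi_N$ are \emph{unique} up to the common scaling governed by $\theta_j(N)$, so each $\theta_j$ is forced to be a non-negative multiple of $\alpha_{\calB}$. Hence the ray is extreme.

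**For the converse** I would take a non-zero $\theta$ generating an extreme ray and show its support defines a \emph{min-balanced} system. Set $\calB \coloneqq \{S : \theta(S) < 0\} = \calB_{\theta}$; by the reduction above, $\calB$ is balanced on $N$ via coefficients $\lambda_S = -\theta(S)/\theta(N) > 0$, so condition (i) holds. The key point is that $\calB$ must be \emph{minimal} — equivalently, by Lemma~\ref{lem.equiv-min-bal}, the vectors $\{\chi_S : S \in \calB\}$ must be linearly independent. If they were dependent, there would be a non-trivial relation $\sum_{S \in \calB} \gamma_S \chi_S = \0$; I would use this (exactly as in the proof of Lemma~\ref{lem.equiv-min-bal}) to perturb $\theta$ within $\Theta^{N}_{N}$ in two opposite directions $\theta \pm \varepsilon \eta$, both remaining in the cone for small $\varepsilon > 0$ because the perturbation only touches coordinates where $\theta$ is strictly negative (keeping non-positivity) while preserving o-standardization and the values at $N$ and $\emptyset$. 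This would exhibit $\theta$ as the midpoint of two non-proportional cone elements, contradicting extremality. Therefore $\calB$ is min-balanced, and since $\theta$ has at least two sets in its support (a single set cannot satisfy balancedness on $N$ with $|N| \geq 2$), $\calB$ is non-trivial. Matching the normalizations shows $\theta$ is a positive multiple of $\alpha_{\calB}$.

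**The main obstacle** I anticipate is the bookkeeping in the extremality-versus-independence equivalence: one must verify carefully that the perturbation vector $\eta$ (built from the dependency coefficients $\gamma_S$) genuinely stays inside $\Theta^{N}_{N}$ for both signs, which hinges on $\eta$ being supported exactly on $\calB$ where $\theta$ is \emph{strictly} negative, so that $\theta(S) \pm \varepsilon\, \eta(S) \leq 0$ persists for small $\varepsilon$. The strictness of $\theta(S) < 0$ on $\calB$ is what makes room for the perturbation; confirming that $\eta$ can be completed to an o-standardized vector with the right values at $\emptyset$ and $N$ (namely $\eta(N) = \eta(\emptyset) = 0$, consistent with a pure rebalancing among the proper subsets) is the delicate step, but it follows directly from $\sum_{S \in \calB} \gamma_S \chi_S = \0$.
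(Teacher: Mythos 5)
Your proof is correct in substance, but it is not the paper's proof: the paper argues \emph{indirectly}, while you argue \emph{directly}. The paper's proof is three lines long: by the first claim of Lemma~\ref{lem.exact-cone}, $B(N)$ is the dual cone to $\Theta^{N}_{N}$; since $\Theta^{N}_{N}$ is polyhedral, the two cones are mutually dual, so the extreme rays of the pointed cone $\Theta^{N}_{N}$ (pointedness by Lemma~\ref{lem.dual-cones}) correspond to the facets of $B(N)$; and those facets are exactly described by non-trivial min-balanced systems on $N$ by the classic result of Shapley \cite{Shapley1967On-balanced-set} recalled as Lemma~\ref{lem.balanced-cone}. Your route instead works inside the cone itself: you translate o-standardization plus the sign constraints into the balancedness identity $\chi_{N}=\sum_{S}\lambda_{S}\cdot\chi_{S}$, and then match extremality of $\theta$ against the linear-independence/minimality condition of Lemma~\ref{lem.equiv-min-bal} by a two-sided perturbation argument. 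This is precisely the ``direct proof, analogous to our later proof of Lemma~\ref{lem.total-decomp}'' that the authors explicitly say they could give but omit. What your version buys is independence from Shapley's theorem: combined with the duality observation, it would yield Lemma~\ref{lem.balanced-cone} as a corollary rather than an input (the paper itself points out this reversal immediately after its proof). What the paper's version buys is brevity.

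One detail in your final paragraph is wrong, though harmlessly so. You claim the perturbation vector $\eta$ can be completed with $\eta(\emptyset)=0$ and that this ``follows directly from $\sum_{S\in\calB}\gamma_{S}\cdot\chi_{S}=\0$''. It does not: o-standardization also requires $\sum_{T\subseteq N}\eta(T)=\llangle\eta,\muo\rrangle=0$, which forces $\eta(\emptyset)=-\sum_{S\in\calB}\gamma_{S}$, and this sum need not vanish; for instance the dependency $\chi_{\{a\}}+\chi_{\{b\}}-\chi_{\{a,b\}}=\0$ has coefficient sum $1$, and it can occur inside the support of a non-minimal balanced system such as $\{\,\{a\},\{b\},\{a,b\},\{c\}\,\}$ on $N=\{a,b,c\}$. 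The fix is immediate: define $\eta(\emptyset)\coloneqq-\sum_{S\in\calB}\gamma_{S}$ and keep $\eta(N)=0$. Since Definition~\ref{def.Theta-cones} places no sign restriction on the coordinate at $\emptyset$ (its non-negativity in Lemma~\ref{lem.dual-cones} is a consequence of the other constraints, not a constraint itself), both perturbed vectors $\theta\pm\varepsilon\cdot\eta$ still lie in $\Theta^{N}_{N}$ for small $\varepsilon>0$, and your contradiction with extremality stands.
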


We would be able to give a direct proof of Lemma~\ref{lem.balanced-decomp},
analogous to our later proof of Lemma~\ref{lem.total-decomp}.
Nonetheless, we guess that the reader will prefer a shorter indirect proof based
on a well-known classic result reported earlier in Lemma~\ref{lem.balanced-cone}.

\begin{proof}
The first claim in Lemma~\ref{lem.exact-cone} says that $B(N)$ is the dual cone to $\Theta^{N}_{N}$.
Since $\Theta^{N}_{N}$ is a polyhedral cone, this implies that $B(N)$
and $\Theta^{N}_{N}$ are (polyhedral) cones which are dual each other (see Section~\ref{sec.basic-notions}). Therefore, the extreme rays of the pointed cone $\Theta^{N}_{N}$ (see Lemma~\ref{lem.dual-cones})
correspond to facets of $B(N)$. These are described by non-trivial min-balanced systems on
$N$ in Lemma~\ref{lem.balanced-cone}. In particular, $\theta$ is an extreme ray of $\Theta^{N}_{N}$
iff it has the form $\theta=k\cdot \alpha_{\calB}$ for some $k>0$ and a non-trivial min-balanced system
$\calB$ on $N$.
\qed \end{proof}

Of course, the observation from the above proof that the cones $\Theta^{N}_{N}$ and $B(N)$ are dual each other allows one to derive Lemma~\ref{lem.balanced-cone} as a consequence of the statement in Lemma~\ref{lem.balanced-decomp}.
Note that, in the original proof of Lemma~\ref{lem.balanced-cone} from \cite{Shapley1967On-balanced-set},
the dual cone to ${\cal B}(N)$ has been described in more complicated way, as the conic hull of an infinite set of coefficient vectors for balancing set systems.
Our simplification allows one to observe the following.

\begin{corollary}\label{cor.balan-inner}\rm
If $|N|\geq 2$ then the cone $B(N)$ is the conic hull of its linearity space $L(N)$ of modular set functions and the functions $-\delta_{S}$ for $\emptyset\neq S\subset N$.
\end{corollary}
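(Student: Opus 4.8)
The plan is to read off the statement from the duality already set up in Lemma~\ref{lem.exact-cone}(1). That lemma asserts precisely $B(N)=(\Theta^{N}_{N})^{*}$, so it suffices to compute the dual cone of $\Theta^{N}_{N}$ and recognise it as the claimed conic hull. The point is that $\Theta^{N}_{N}$ is handed to us by Definition~\ref{def.Theta-cones} already in ``H-form'' (an intersection of half-spaces and hyperplanes), and the conic hull in the corollary is exactly the corresponding ``V-form'' of the dual. Thus no extra structural analysis of $B(N)$ itself is needed; everything is extracted from the presentation of $\Theta^{N}_{N}$.

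Concretely, I would first rewrite Definition~\ref{def.Theta-cones} with $D=N$ in inner-product notation. The inequality constraints ``$\theta(S)\leq 0$ for $S\notin\{\emptyset,N\}$'' say exactly $\llangle -\delta_{S},\theta\rrangle\geq 0$ for every $\emptyset\neq S\subset N$, so the functions $-\delta_{S}$ are precisely the normals of the half-spaces cutting out $\Theta^{N}_{N}$. The equality constraints $\sum_{T\subseteq N}\theta(T)=0$ and $\sum_{T\subseteq N:\,i\in T}\theta(T)=0$ for $i\in N$ read $\llangle\muo,\theta\rrangle=0$ and $\llangle\mui,\theta\rrangle=0$; since $\{\muo\}\cup\{\mui:i\in N\}$ is a basis of $L(N)$, these equalities are equivalent to $\theta\perp L(N)$. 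Hence
$$
\Theta^{N}_{N}=\{\,\theta\in{\dv R}^{\caP}\,:\ \llangle -\delta_{S},\theta\rrangle\geq 0 \ \mbox{for}\ \emptyset\neq S\subset N,\ \llangle\ell,\theta\rrangle=0 \ \mbox{for}\ \ell\in L(N)\,\}.
$$

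Next I would invoke the Minkowski--Weyl duality for a polyhedral cone in this mixed inequality/equality form: splitting each equality $\llangle\ell,\theta\rrangle=0$ into the pair $\llangle\pm\ell,\theta\rrangle\geq 0$ exhibits $\Theta^{N}_{N}$ as $(\,\mathrm{cone}\,(\{-\delta_{S}:\emptyset\neq S\subset N\}\cup\{\pm\ell:\ell\in L(N)\})\,)^{*}$. By the bipolar theorem its dual is then the finitely generated, hence closed, cone
$$
(\Theta^{N}_{N})^{*}=\mathrm{cone}\,\{-\delta_{S}:\emptyset\neq S\subset N\}+L(N),
$$
and combining this with $B(N)=(\Theta^{N}_{N})^{*}$ yields exactly the asserted description of $B(N)$ as the conic hull of its linearity space $L(N)$ and the functions $-\delta_{S}$. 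The computation is routine once the matching is in place; the only genuine obstacle is bookkeeping, namely checking that the index set of the defining half-spaces of $\Theta^{N}_{N}$ (those $S$ with $S\notin\{\emptyset,N\}$, i.e.\ $\emptyset\neq S\subset N$) coincides with the index set of the claimed generators $-\delta_{S}$, and that the equality block of Definition~\ref{def.Theta-cones} is precisely $L(N)^{\perp}$ and nothing larger or smaller. As an independent sanity check, one verifies directly that each $-\delta_{S}$ with $\emptyset\neq S\subset N$ lies in $B(N)$: its shifted version equals itself, and the zero allocation $\b0$ lies in its core, so $-\delta_{S}$ is balanced; this confirms the inclusion $\mathrm{cone}\,\{-\delta_{S}\}+L(N)\subseteq B(N)$ even without appealing to duality.
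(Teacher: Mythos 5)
Your proposal is correct and takes essentially the same route as the paper: both arguments start from Lemma~\ref{lem.exact-cone}(1), i.e., from $B(N)=(\Theta^{N}_{N})^{*}$, and read the inner description of $B(N)$ off the inequality/equality presentation of $\Theta^{N}_{N}$, identifying the equality block with $L(N)^{\perp}$ and the inequality normals with the functions $-\delta_{S}$ for $\emptyset\neq S\subset N$. The only difference is in how the dualization step is justified: the paper uses the anti-isomorphism of face lattices of dual cones (linearity space of $B(N)$ equal to the orthogonal complement of the linear hull of $\Theta^{N}_{N}$, atomic faces corresponding to facets), while you split the equalities into pairs of inequalities and invoke Minkowski--Weyl together with the bipolar theorem, which is an equally valid and, if anything, more explicit way of carrying out the same computation.
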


Note that $L(N)$ is spanned by the functions $\pm\muo$ and $\pm\mui$ for $i\in N$. In particular, the extreme rays of the pointed cone
${\cal B}_{\ell}(N) \coloneqq \{ \m\in{\cal B}(N)\,:\ \m(\{i\})=0~~ \mbox{for $i\in N$}\}$
are those generated by functions $-\delta_{S}$ for $S\subseteq N$, $|S|\geq 2$.

\begin{proof}
Definition~\ref{def.Theta-cones} allows one to observe that
the linear hull of $\Theta^{N}_{N}$ is the set of \mbox{o-standardized} functions and
that (all) facets of $\Theta^{N}_{N}$ are defined by inequalities $\llangle\theta,-\delta_{S}\rrangle\geq 0$
for $\emptyset\neq S\subset N$. Thus, the dual cone $B(N)$ has the
orthogonal complement of the linear hull of $\Theta^{N}_{N}$ as its linearity space and the atomic faces of $B(N)$
correspond to facets of $\Theta^{N}_{N}$. These atomic faces correspond to the extreme rays of any pointed version of $B(N)$.
\qed \end{proof}

\subsection{Characterization of the totally balanced cone}\label{ssec.dual-total}

First we introduce a polytope generating the dual cone to $T(N)$.

\begin{definition}\rm\label{def.Delta-cones}
For any set $M\subseteq N$, $|M|\geq 2$, we denote by $\Delta_{M}$ the set of $\theta\in {\dv R}^{\caP}$ satisfying the conditions
\begin{enumerate}
    \item $\theta (S)\leq 0$, for $\emptyset\neq S\subset M$, $\theta(R)=0$ for $R\subseteq N$, $R\smin M\neq\emptyset$ and
    \item $\theta(\emptyset)=1$, $\sum_{S\subseteq N} \theta (S)=0, \sum_{T\subseteq N:\, i\in T} \theta (T)=0$ for any $i\in N$.
\end{enumerate}
Further, we introduce the polytope
$$
\Delta ~\coloneqq~ \mbox{conv}\, \left(\bigcup_{M\subseteq N, |M|\geq 2} \Delta_{M} \right).
$$
\end{definition}

Note that every $\Delta_{M}$ is a bounded polyhedron as
$$1=\theta(\emptyset)=\sum_{\emptyset\neq T\subseteq M\smin \{i\}} -\theta(T)\geq 0$$ for any  $i\in M$ and $\theta(M)=\sum_{S\subset M} -\theta(S)$. This makes the definition correct.

\begin{lemma}\label{lem.total-dual}\rm
If $|N|\geq 2$ and $\m\in {\dv R}^{\caP}$ then
$$
\m\in T(N) ~~\Leftrightarrow~~ \llangle\theta,\m\rrangle\geq 0\quad
\mbox{for any $\theta\in\Delta$.}
$$
\end{lemma}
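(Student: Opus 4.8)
The plan is to reduce the statement to the balancedness criterion of Lemma~\ref{lem.exact-cone}(1), applied not to $N$ itself but to each admissible carrier $M$. First I would observe that every $\theta\in\Delta$ satisfies $\sum_{S\subseteq N}\theta(S)=0$: this holds on each $\Delta_M$ by its defining conditions and is preserved under convex combinations. Consequently $\llangle\theta,\m\rrangle=\llangle\theta,\mg\rrangle$ for all $\theta\in\Delta$, where $\mg$ is the game from \eqref{eq.zero-projection}, because $\m=\mg+\m(\emptyset)\cdot\muo$ and $\llangle\theta,\muo\rrangle=\sum_{S\subseteq N}\theta(S)=0$. Since $\m\in T(N)$ means exactly $\mg\in{\cal T}(N)$, i.e.\ that every subgame of $\mg$ is balanced, and since subgames on singletons are automatically balanced, it suffices to treat carriers $M$ with $|M|\geq 2$; these are precisely the ones indexing $\Delta$.

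The technical heart is a clean identification of $\Delta_M$ with a normalized slice of the cone $\Theta^{M}_{M}$ living on the smaller player set $M$. Concretely, restricting any $\theta\in\Delta_M$ to ${\cal P}(M)$ yields an element $\theta'\in\Theta^{M}_{M}$ with $\theta'(\emptyset)=1$: the sign conditions transcribe directly, while the balance conditions $\sum_{T\subseteq N:\,i\in T}\theta(T)=0$ hold vacuously for $i\notin M$ (as $\theta$ is supported on subsets of $M$) and become the balance conditions defining $\Theta^{M}_{M}$ for $i\in M$. Conversely, extending any $\theta'\in\Theta^{M}_{M}$ with $\theta'(\emptyset)=1$ by zeros off ${\cal P}(M)$ returns a member of $\Delta_M$. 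Moreover, since $\theta$ vanishes outside ${\cal P}(M)$ and $\mg(\emptyset)=0$, one has $\llangle\theta,\mg\rrangle=\sum_{S\subseteq M}\theta(S)\cdot\mg(S)$, which is exactly the balancedness functional for the restriction $\mg_M$ tested against $\theta'$ on the ground set $M$.

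With this dictionary in hand both implications are short. For $(\Rightarrow)$, if $\mg\in{\cal T}(N)$ then each $\mg_M$ is balanced, so Lemma~\ref{lem.exact-cone}(1) applied to $M$ gives $\llangle\theta',\mg_M\rrangle\geq 0$ for every $\theta'\in\Theta^{M}_{M}$, hence $\llangle\theta,\mg\rrangle\geq 0$ for every $\theta\in\Delta_M$; as $\Delta=\mathrm{conv}\bigl(\bigcup_M\Delta_M\bigr)$ and $\theta\mapsto\llangle\theta,\mg\rrangle$ is linear, the inequality propagates to all of $\Delta$. For $(\Leftarrow)$, fix $M$ with $|M|\geq 2$ and an arbitrary nonzero $\theta'\in\Theta^{M}_{M}$; by Lemma~\ref{lem.dual-cones} applied to the player set $M$ one has $\theta'(\emptyset)>0$, so $\theta'/\theta'(\emptyset)$ extends to a point of $\Delta_M\subseteq\Delta$, and the hypothesis together with the identity $\llangle\theta,\mg\rrangle=\llangle\theta',\mg_M\rrangle$ yields $\llangle\theta',\mg_M\rrangle\geq 0$. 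Lemma~\ref{lem.exact-cone}(1) then makes $\mg_M$ balanced for every such $M$, whence $\mg\in{\cal T}(N)$, i.e.\ $\m\in T(N)$. I expect the main obstacle to be purely the careful verification of the $\Delta_M\leftrightarrow\Theta^{M}_{M}$ correspondence---in particular that the normalization $\theta'(\emptyset)=1$ captures all of $\Theta^{M}_{M}$ up to positive scaling, which rests on $\theta'(\emptyset)>0$ for nonzero vectors---after which the two inclusions reduce to bookkeeping with the convex-hull structure.
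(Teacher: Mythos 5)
Your proof is correct and follows essentially the same route as the paper's: identify $\Delta_{M}$ with the slice $\{\theta'\in\Theta^{M}_{M}:\theta'(\emptyset)=1\}$ via restriction/zero-extension, apply Lemma~\ref{lem.exact-cone}(1) with player set $M$ in both directions, use Lemma~\ref{lem.dual-cones} to get $\theta'(\emptyset)>0$ for the normalization, and propagate over $\Delta$ by convexity. The only cosmetic difference is that you pass to the shifted game $\mg$ explicitly at the outset, whereas the paper performs that reduction inside the proof of Lemma~\ref{lem.exact-cone} and works with the extended cones $B(M)$ directly.
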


\begin{proof}
If $\m\in T(N)$ and $\theta\in\Delta_{M}$ for some $M\subseteq N$, $|M|\geq 2$,
then the restriction of $\m$ to ${\cal P}(M)$ belongs
to $B(M)$, the restriction of $\theta$ to ${\cal P}(M)$ belongs to $\Theta^{M}_{M}$ and
$\llangle \theta, \m\rrangle=\sum_{S\subseteq M} \theta(S)\cdot \m(S)\geq 0$ by Lemma~\ref{lem.exact-cone}
applied to $N=M$. A convex combination of valid inequalities for $T(N)$ is a valid inequality
for $T(N)$ which gives $\llangle\theta,\m\rrangle\geq 0$ for $\theta\in\Delta$.

Conversely, if all the inequalities hold for $\m\in {\dv R}^{\caP}$
then, for any fixed $M\subseteq N$, $|M|\geq 2$, the restriction of $\m$ to ${\cal P}(M)$
satisfies $\sum_{S\subseteq M} \theta^{\prime}(S)\cdot \m(S)\geq 0$ for any $\theta^{\prime}\in\Theta^{M}_{M}$
with $\theta^{\prime}(\emptyset)=1$. By Lemma~\ref{lem.dual-cones}, any non-zero $\theta\in\Theta^{M}_{M}$ satisfies
$\theta(\emptyset)>0$ and is a positive multiple of such $\theta^{\prime}$. This implies the same inequalities for any $\theta\in\Theta^{M}_{M}$.
Hence, by Lemma~\ref{lem.exact-cone} applied to $N=M$, the restriction of $\m$ to ${\cal P}(M)$ belongs
to $B(M)$. The same is the case with $M\subseteq N$, $|M|=1$, which gives $\m\in T(N)$.
\qed \end{proof}

\subsection{Proof of Theorem~\ref{thm.total}}\label{ssec.basic-total}

\begin{lemma}\label{lem.total-decomp}\rm
A vector $\theta\in{\dv R}^{\caP}$ is an extreme point of
$\Delta$ iff it is $\alpha_{\calB}(\emptyset)^{-1}$-multiple of $\alpha_{\calB}$
for a non-trivial irreducible min-balanced system $\calB$ on some $M\subseteq N$, $|M|\geq 2$.
\end{lemma}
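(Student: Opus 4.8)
The plan is to identify the extreme points of $\Delta$ with normalized extreme rays of the cones $\Theta^{M}_{M}$ and then decide, among the resulting candidates, which survive as vertices of the large polytope. Every $\theta\in\Delta$ satisfies $\theta(\emptyset)=1$, so $\Delta$ lies in a hyperplane and each $\Delta_{M}$ is exactly the slice $\{\theta\in\Theta^{M}_{M}:\theta(\emptyset)=1\}$; by Lemma~\ref{lem.dual-cones} the cone $\Theta^{M}_{M}$ is pointed with $\theta(\emptyset)>0$ on all nonzero vectors, so the vertices of $\Delta_{M}$ are precisely the normalizations of its extreme rays. By Lemma~\ref{lem.balanced-decomp} (applied with $N=M$) these are the vectors $\theta_{\calB}:=\alpha_{\calB}(\emptyset)^{-1}\alpha_{\calB}$ for non-trivial min-balanced $\calB$ on $M$. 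Since every extreme point of $\Delta=\mbox{conv}\,(\bigcup_{M}\Delta_{M})$ must be a vertex of some $\Delta_{M}$, each extreme point of $\Delta$ has the form $\theta_{\calB}$. It then remains to show that $\theta_{\calB}$ is a vertex of $\Delta$ if and only if $\calB$ is irreducible.

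For the direction \emph{reducible $\Rightarrow$ not a vertex}, I would invoke Lemma~\ref{lem.irreducible}: reducibility yields min-balanced systems $\calC$ on a proper subset $A\subset M$ and $\calD$ on $M$ whose inequalities sum (with positive weights) to the inequality of $\calB$. Matching non-empty coordinates and using o-standardization to fix the empty-set coordinate gives $\alpha_{\calB}=\gamma_{\calC}\alpha_{\calC}+\gamma_{\calD}\alpha_{\calD}$ with $\gamma_{\calC},\gamma_{\calD}>0$. Reading off the $\emptyset$-coordinate and dividing shows $\theta_{\calB}$ is a proper convex combination of the distinct points $\theta_{\calC}\in\Delta_{A}$ and $\theta_{\calD}\in\Delta_{M}$, so it is not extreme.

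The hard direction, \emph{not a vertex $\Rightarrow$ reducible}, is the main obstacle. If $\theta_{\calB}$ is not a vertex of $\Delta$, then (refining a convex representation to vertices of the $\Delta_{M'}$) I obtain a conic decomposition $\alpha_{\calB}=\sum_{i}\gamma_{i}\,\alpha_{\calB_{i}}$ with $\gamma_{i}>0$ into coefficient vectors of non-trivial min-balanced systems $\calB_{i}$ on carriers $M_{i}$, not all equal to $\calB$. The first step is to show all $M_{i}\subseteq M:=\bigcup\calB$: evaluating the decomposition at a carrier $M^{*}$ maximal among the $M_{i}$, every term contributes nonnegatively and some contributes strictly positively, forcing $\alpha_{\calB}(M^{*})>0$ and hence $M^{*}=M$. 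Since $\theta_{\calB}$ is still a vertex of $\Delta_{M}$, the representation must use a point outside $\Delta_{M}$, so some carrier $M_{i_{0}}$ is a proper subset of $M$.

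The remaining two choices are the crux. First I take $A:=M_{i_{0}}$ to be a \emph{minimal} proper carrier; then at every coordinate $S\subsetneq A$ there can be no positive contribution (a positive one would make $S$ an even smaller carrier), so $\alpha_{\calB}(S)\le 0$ with strict inequality coming from the $i_{0}$ term, forcing $S\in\calB$. This proves $\calB_{i_{0}}\subseteq\calB_{A}$, and $\chi_{A}=\sum_{S\in\calB_{i_{0}}}\lambda^{(i_{0})}_{S}\chi_{S}$ gives condition~(1) of Definition~\ref{def.irreducible}. Second, writing $\chi_{M}=\sum_{S\in\calB}\lambda_{S}\chi_{S}$ and substituting $\chi_{B}$ via the relation for $\chi_{A}$, I choose $B\in\calB_{i_{0}}$ minimizing the ratio $\lambda_{S}/\lambda^{(i_{0})}_{S}$; this ``minimum-ratio'' choice is exactly what keeps all resulting coefficients nonnegative, yielding $\chi_{M}\in\mbox{cone}\,\{\chi_{T}:T\in\{A\}\cup(\calB\smin\{B\})\}$, which is condition~(2). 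Hence $\calB$ is reducible, completing the contrapositive. The delicate points—isolating a minimal proper carrier to force $\calB_{i_{0}}\subseteq\calB_{A}$, and the minimum-ratio selection of $B$ guaranteeing conic (not merely affine) coefficients—are where the real work lies; the uniqueness of balancing coefficients from Lemma~\ref{lem.equiv-min-bal}(ii) underpins both.
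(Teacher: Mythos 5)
Your proposal is correct, and while its first half (extreme points of $\Delta$ are among the normalized vectors $\alpha_{\calB}(\emptyset)^{-1}\alpha_{\calB}$, via Lemma~\ref{lem.balanced-decomp}, and the direction ``reducible $\Rightarrow$ not extreme'' via Lemma~\ref{lem.irreducible}/Corollary~\ref{cor.irreducible}) coincides with the paper's, your treatment of the hard direction is genuinely different. The paper proves ``irreducible $\Rightarrow$ extreme'' head-on: given a convex decomposition $\tilde{\alpha}_{\calB}=\sum_i\gamma_i\tilde{\alpha}_{\calB_i}$, it first shows all $\calB_i\subseteq{\cal P}(M)$ by testing the decomposition against the totally balanced games $\muR$ (Lemma~\ref{lem.total-dual}) with a downward induction, then establishes the stronger structural fact $\calB_i\subseteq\calB$ for \emph{every} $i$ by taking an inclusion-minimal offending \emph{member} $A\in\calB_j\setminus\calB$ (which is then shown to be a carrier $M_k$), and obtains condition (2) of Definition~\ref{def.irreducible} by a dual-side perturbation $\theta^{\varepsilon}=\tilde{\alpha}_{\calB}+\varepsilon(\tilde{\alpha}_{\calB}-\theta_k)$ pushed to the boundary of the bounded polytope $\Delta_M$. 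You instead prove the contrapositive with purely sign-theoretic and primal arguments: carriers lie inside $M$ by evaluating the decomposition at a \emph{maximal} carrier and reading off the sign pattern of \eqref{eq.coef-def}; the reducibility witness $A$ is chosen as a \emph{minimal proper carrier} (rather than a minimal offending member), which forces $\calB_{i_0}\subseteq\calB_A$ again by signs; and condition (2) comes from a minimum-ratio substitution of $\chi_A=\sum_{S\in\calB_{i_0}}\lambda^{(i_0)}_S\chi_S$ into the balancing relation $\chi_M=\sum_{S\in\calB}\lambda_S\chi_S$ -- this is the same ``largest feasible step'' idea as the paper's maximal $\varepsilon$, but executed on the primal balancing coefficients instead of on the dual vector $\theta$. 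What your route buys is economy and self-containedness: it never needs Lemma~\ref{lem.total-dual}, the games $\muR$, or the boundedness/geometry of $\Delta_M$, only the sign structure of \eqref{eq.coef-def} and uniqueness of balancing coefficients. What the paper's route buys is sharper intermediate information: it shows every term of the decomposition satisfies $\calB_i\subseteq\calB$ and ultimately $\theta_i=\tilde{\alpha}_{\calB}$, which identifies the decomposition completely rather than merely contradicting its existence.
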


\begin{proof}
Let $\tilde{\alpha}_{\calB}$ denote the $\alpha_{\calB}(\emptyset)^{-1}$-multiple of $\alpha_{\calB}$ for any such system $\calB$. Every such vector $\tilde{\alpha}_{\calB}$ for a min-balanced system $\calB$
on $M\subseteq N$, $|M|\geq 2$, belongs to $\Delta_{M}$ (see the formula \eqref{eq.coef-def} in Section \ref{ssec.min-balan-ineq}) and, therefore,
it belongs to $\Delta$.

We first show that any extreme point $\theta$ of $\Delta$ has the form $\tilde{\alpha}_{\calB}$
for an irreducible min-balanced system $\calB$ on some $M\subseteq N$, $|M|\geq 2$. Because of the
definition of $\Delta$ there exists such $M$ that $\theta$ is an extreme point of $\Delta_{M}$.
In particular, the restriction of $\theta$ to ${\cal P}(M)$ generates an extreme ray of $\Theta^{M}_{M}$.
By Lemma~\ref{lem.balanced-decomp} applied to $N=M$ we derive that $\theta=\tilde{\alpha}_{\calB}$
for some non-trivial min-balanced system on $M$.
Corollary~\ref{cor.irreducible} then implies that $\calB$ must be
irreducible as otherwise $\theta$ is a convex combination of $\tilde{\alpha}_{\calC}\in\Delta$
for irreducible min-balanced systems ${\cal C}\subseteq {\cal P}(M)$.

The second step is to show that every $\tilde{\alpha}_{\calB}$
for a non-trivial irreducible min-balanced system $\calB$ on some $M\subseteq N$, $|M|\geq 2$,
is an extreme point of $\Delta$. As $\tilde{\alpha}_{\calB}\in\Delta$, by the former step, one can write
\begin{align*}
&\tilde{\alpha}_{\calB} = \sum_{i=1}^{t} \, \gamma_{i}\cdot \theta_{i}\qquad
\mbox{where $t\geq 1$, $\gamma_{i}>0$ for $i=1,\ldots, t$,~ $\sum_{i=1}^{t} \gamma_{i}=1$, ~\mbox{and}}\\
&  \forall\, i=1,\ldots, t,\; \theta_{i}=\tilde{\alpha}_{{\calB}_{i}}\; \mbox{for non-trivial min-balanced ${\calB}_{i}$ on $M_{i}\subseteq N$, $|M_{i}|\geq 2$.}
\end{align*}
We are going to show that $\theta_{i}=\tilde{\alpha}_{\calB}$ for all $i=1,\ldots, t$, which gives the extremity of $\tilde{\alpha}_{\calB}$. Realize that, for every $R\subseteq N$, one has $\muR\in T(N)$:
if $\emptyset\neq R\subseteq S$ for some $\emptyset\neq S\subseteq N$, then the restriction of $\muR$ to
${\cal P}(S)$ is in ${\cal B}(S)\subseteq B(S)$ because $\frac{1}{|R|}\cdot\chi_{R}\in {\dv R}^{S}$ is in its core.
Therefore, by Lemma~\ref{lem.total-dual}, $\llangle \theta_{i},\muR\rrangle\geq 0$
for any $i$ and $R$. For any $R\subseteq N$ with $R\smin M\neq\emptyset$,
$$
0=\sum_{S\subseteq N:\, R\subseteq S} \tilde{\alpha}_{\calB}(S)
=\llangle \tilde{\alpha}_{\calB},\muR\rrangle
=\sum_{i=1}^{t} \gamma_{i}\cdot \underbrace{\llangle \theta_{i},\muR\rrangle}_{\geq 0}
\quad\Rightarrow\quad \forall\, i\quad\llangle\theta_{i},\muR\rrangle=0\,.
$$
Hence, we observe by decreasing induction governed by cardinality of $R$ that,
for any $R\subseteq N$ with $R\smin M\neq\emptyset$
and $i=1,\ldots,t$, one has $\theta_{i}(R)=0$. Thus, for every $i$ one has $\calB_{i}\subseteq {\cal P}(M)$.

A crucial observation is that, in fact, $\calB_{i}\subseteq\calB$ for any $i=1,\ldots,t$. To verify that
assume for contradiction that there exists $j$ such that $\calB_{j}\setminus\calB\neq\emptyset$.
Consider inclusion minimal set $A\subseteq N$ such that $[\,\exists\, j\in\{1,\ldots,t\}\quad A\in\calB_{j}\setminus\calB\,]$.
By $\calB_{i}\subseteq {\cal P}(M)$ for all $i$ one has $A\subseteq M$.
The fact $A\in\calB_{j}$ implies, by Lemma~\ref{lem.min-balanced}, that
$\emptyset\neq A\subset \bigcup\calB_{j}=M_{j}\subseteq M$, which gives $A\subset M$.
Because $\emptyset\neq A\not\in\calB$ one has
$0\stackrel{\eqref{eq.coef-def}}{=}\tilde{\alpha}_{\calB}(A)=\sum_{i=1}^{t} \, \gamma_{i}\cdot \theta_{i}(A)$
(see Section \ref{ssec.min-balan-ineq}) and, since $j$ exists with $A\in\calB_{j}$ one has
$\theta_{j}(A)=\tilde{\alpha}_{{\calB}_{j}}(A)<0~\Rightarrow~ \gamma_{j}\cdot \theta_{j}(A)<0$
implying the existence of $k\in\{1,\ldots,t\}$ such that $\tilde{\alpha}_{{\calB}_{k}}(A)=\theta_{k}(A)>0$,
which means $A=M_{k}=\bigcup\calB_{k}$ (see Section \ref{ssec.min-balan-ineq}).
The inclusion minimality of $A$ means, for any $Z\subset A$ and $i$, that $Z\in\calB_{i} ~\Rightarrow~ Z\in\calB$.
In particular, any $Z\in\calB_{k}$ satisfies both $Z\subset A$ and $Z\in\calB$ and we have shown
$\calB_{k}\subseteq \calB_{A}=\{S\in\calB\,:\ S\subset A\}$.
Since $\calB_{k}$ is min-balanced on $M_{k}=A$ it implies that
$\chi_{A}$ is in the conic hull of $\{\chi_{S}\,:\, S\in\calB_{A}\}$. 

Now, for every $\varepsilon\geq 0$ we put
$$
\theta^{\varepsilon} ~\coloneqq~ \tilde{\alpha}_{\calB}+\varepsilon\cdot (\tilde{\alpha}_{\calB}-\theta_{k})
$$
and observe that, for sufficiently small $\varepsilon>0$, one has $\theta^{\varepsilon}\in\Delta_{M}$
(use Definition~\ref{def.Delta-cones} and realize that ${\cal B}$ is on $M$ and
$\calB_{k}\subseteq \calB_{A}\subseteq\calB$).
On the other hand, $\theta^{\varepsilon}\not\in\Delta_{M}$ for sufficiently large $\varepsilon>0$
(because $\Delta_{M}$ is bounded).
Take maximal $\varepsilon>0$ such that $\theta^{\varepsilon}\in\Delta_{M}$, which means that there exists
$B\subset A$ such that $\theta^{\varepsilon}(B)=0$ (such $B$ necessarily belongs to $\calB_{k}\subseteq\calB$).
The definition of $\Delta_{M}$ allows one to observe
that ${\cal C}\coloneqq\{ S\subseteq M\,:\, \theta^{\varepsilon}(S)<0\}$ is such that
$\chi_{M}$ is in the conic hull of $\{\chi_{T}\,:\, T\in{\cal C}\,\}$.
Indeed, realize that $\theta^{\varepsilon}(M)>0$
(because of $\bigcup\calB_{k}=A\subset M$) and that $\theta^{\varepsilon}$
vanishes outside ${\cal P}(M)$ (because $\theta^{\varepsilon}\in\Delta_{M}$).
The o-standardization condition from Definition~\ref{def.Delta-cones} implies, for every $i\in M$,
$$
\theta^{\varepsilon}(M) = 
- \sum_{T\subseteq M:\, i\in T} \theta^{\varepsilon}(T) =
- \sum_{T\subseteq M} \theta^{\varepsilon}(T)\cdot\chi_{T}(i)
$$
This gives
$$ \chi_{M}(i)=1=\sum_{T\subseteq M} \underbrace{\frac{-\theta^{\varepsilon}(T)}{\theta^{\varepsilon}(M)}}_{\geq 0}\cdot\chi_{T}(i)\,,
$$
which implies the claim about $\calC$.
Since ${\cal C}\subseteq \{A\}\cup(\calB\setminus \{B\})$ one has
$\chi_{M}$  is in the conic hull of $\{\chi_{T}\,:\, T\in\{A\}\cup(\calB\setminus \{B\})\,\}$.
This altogether means, by Definition~\ref{def.irreducible}, that $\calB$ is reducible, which
contradicts the assumption about $\calB$.

Thus, we are sure that $\calB_{i}\subseteq\calB$ for all $i=1,\ldots,t$.
If $\calB_{i}$ is such that $M_{i}=\bigcup\calB_{i}=M$ then
the assumption that $\calB$ is min-balanced on $M$ implies $\calB_{i}=\calB$ and
$\alpha_{{\cal B}_{i}}=\alpha_{\calB}$.
Hence, $\theta_{i}=\tilde{\alpha}_{{\calB}_{i}}=\tilde{\alpha}_{\calB}$ whenever $M_{i}=M$.
It remains to show that there is no $j\in\{1,\ldots,t\}$ such that $M_{j}\coloneqq\bigcup\calB_{j}\subset M$.
Assume for a contradiction that such $j$ exists and write
\begin{align*}
\tilde{\alpha}_{\calB} &= \sum_{i=1}^{t} \, \gamma_{i}\cdot \theta_{i}=
\sum_{i:\,M_{i}=M}  \gamma_{i}\cdot \theta_{i} + \sum_{j:\,M_{j}\subset M}  \gamma_{j}\cdot \theta_{j} \\ &
=\sum_{i:\,M_{i}=M}  \gamma_{i}\cdot \tilde{\alpha}_{\calB} + \sum_{j:\,M_{j}\subset M}  \gamma_{j}\cdot \theta_{j}.
\end{align*}
This implies
$$
(1-\sum_{i:\,M_{i}=M}  \gamma_{i})\cdot \tilde{\alpha}_{\calB}(M)
= \sum_{j:\,M_{j}\subset M}  \gamma_{j}\cdot \theta_{j}(M)= \sum_{j:\,M_{j}\subset M}  \gamma_{j}\cdot \tilde{\alpha}_{{\calB}_{j}}(M)=0,
$$
which gives a contradiction because both $1-\sum_{i:\,M_{i}=M}  \gamma_{i}>0$ and $\tilde{\alpha}_{\calB}(M)>0$.
Thus, one necessarily has $\theta_{i}=\tilde{\alpha}_{\calB}$ for all $i\in\{1,\ldots,t\}$
and the extremity of $\tilde{\alpha}_{\calB}$ is confirmed.
\qed \end{proof}

The following is a consequence of Lemma~\ref{lem.total-dual}.

\begin{corollary}\label{cor.total-decomp}\rm
If $|N|\geq 2$ and $\m\in {\dv R}^{\caP}$ then $\m\in T(N)$ iff
\begin{enumerate}
\item $\llangle\alpha_{\calB},\m\rrangle\geq 0$
for any non-trivial irreducible min-balanced system $\calB$ on $M\subseteq N$, where $|M|\geq 2$;
\end{enumerate}
which is equivalent to a formally stronger condition
\begin{enumerate}\setcounter{enumi}{1}
\item $\llangle\alpha_{\calB},\m\rrangle\geq 0$
for any non-trivial min-balanced system $\calB$ on $M\subseteq N$, where $|M|\geq 2$.
\end{enumerate}
\end{corollary}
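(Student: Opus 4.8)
The plan is to combine Lemma~\ref{lem.total-dual} with Lemma~\ref{lem.total-decomp}, using the elementary fact that a linear functional is nonnegative over a polytope precisely when it is nonnegative at every extreme point of that polytope.

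First I would recall that, by Lemma~\ref{lem.total-dual}, membership $\m\in T(N)$ is equivalent to $\llangle\theta,\m\rrangle\geq 0$ for every $\theta\in\Delta$. Since $\Delta$ is a polytope (the remark following Definition~\ref{def.Delta-cones} shows each $\Delta_{M}$ is a bounded polyhedron, and the convex hull of the finite union of these is again bounded), it coincides with the convex hull of its finitely many extreme points. The linear functional $\theta\mapsto\llangle\theta,\m\rrangle$ therefore attains its minimum over the compact set $\Delta$ at an extreme point, so $\llangle\theta,\m\rrangle\geq 0$ holds on all of $\Delta$ if and only if it holds at every extreme point of $\Delta$.

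Second I would invoke Lemma~\ref{lem.total-decomp}, which identifies the extreme points of $\Delta$ as exactly the vectors $\alpha_{\calB}(\emptyset)^{-1}\cdot\alpha_{\calB}$, ranging over non-trivial irreducible min-balanced systems $\calB$ on $M\subseteq N$ with $|M|\geq 2$. Because $\alpha_{\calB}(\emptyset)\geq 1>0$ (established in Section~\ref{ssec.min-balan-ineq}), the inequality $\llangle\alpha_{\calB}(\emptyset)^{-1}\cdot\alpha_{\calB},\m\rrangle\geq 0$ is equivalent to $\llangle\alpha_{\calB},\m\rrangle\geq 0$ by positive homogeneity of the scalar product. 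Combining this with the first step yields that $\m\in T(N)$ holds if and only if condition (1) holds. For the equivalence of (1) and (2), note that since every irreducible min-balanced system is in particular min-balanced, condition (2) trivially implies condition (1). Conversely, assuming (1), let $\calB$ be any non-trivial min-balanced system on $M$: if $\calB$ is irreducible the inequality is already among those listed in (1); if $\calB$ is reducible, Corollary~\ref{cor.irreducible} expresses $\llangle\alpha_{\calB},\m\rrangle$ as a conic combination of the inequalities $\llangle\alpha_{\calB'},\m\rrangle$ attached to irreducible min-balanced systems $\calB'$ on subsets of $M$, each of which is nonnegative by (1); hence the combination is nonnegative, and (2) follows.

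The only step requiring genuine care, and the nearest thing to an obstacle, is the passage from ``nonnegative on $\Delta$'' to ``nonnegative on the extreme points of $\Delta$'': this reduction hinges on $\Delta$ being a bona fide polytope rather than an unbounded polyhedron, which is precisely what the boundedness remark after Definition~\ref{def.Delta-cones} supplies. Everything else is bookkeeping with the strict positivity of $\alpha_{\calB}(\emptyset)$ and a direct appeal to Corollary~\ref{cor.irreducible}, so I expect the argument to be short.
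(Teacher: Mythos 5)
Your proposal is correct and follows essentially the same route as the paper: Lemma~\ref{lem.total-dual} combined with Lemma~\ref{lem.total-decomp} (via the reduction to extreme points of the polytope $\Delta$ and the positivity of $\alpha_{\calB}(\emptyset)$) gives the equivalence with condition (1), and Corollary~\ref{cor.irreducible} gives the equivalence of (1) and (2). The paper states this in two sentences; you have merely spelled out the standard polytope and rescaling details it leaves implicit.
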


\begin{proof}
The first statement follows directly from Lemma~\ref{lem.total-dual} and Lemma~\ref{lem.total-decomp}. The second condition is equivalent to the first one by Corollary~\ref{cor.irreducible}.
\qed \end{proof}

Theorem~\ref{thm.total} is now easy to prove.

\begin{proof}
By Lemma~\ref{lem.total-dual}, the cone $T(N)$ is dual to the cone $\tilde{\Delta}$
spanned by $\Delta$. The fact that $\tilde{\Delta}$ is a closed convex cone
then implies that $\tilde{\Delta}$ and $T(N)$ are each others dual cones (see Section~\ref{sec.basic-notions}).
Hence, the extreme points of $\Delta$, characterized in Lemma~\ref{lem.total-decomp}, correspond to facets of $T(N)$.
The coefficient vectors for inequalities are just the extreme points of $\Delta$, which are positive multiples of $\alpha_{\calB}$ for non-trivial irreducible min-balanced systems $\calB\subseteq\caP$.
\qed \end{proof}

\section{Conjecture concerning the exact cone}\label{sec.conjecture}
One of our research goals was to characterize facet-defining inequalities for the cone $E(N)$. Despite we have not succeeded to get an ultimate answer to that question we came to a sensible
conjecture about what are these inequalities. Because a crucial notion in our conjecture is the concept of an irreducible min-balanced system introduced in Section\,\ref{sec.irreducible}, we will formulate the conjecture in this paper.

\begin{table}[h]
\caption{Numbers of facets of $E(N)$ and of its types for $n=|N|\leq 5$.}\label{tab.facet-numbers}
\begin{tabular}{|l|c|c|c|c|} \hline
Number of players & $n=2$ & $n=3$& $n=4$& $n=5$ \\ \hline
Number of facets & $1$ & $6$& $44$& $280$ \\
Number of its permutational types & $1$ & $2$& $6$& $16$ \\ \hline
\end{tabular}
\end{table}

The first step towards the conjecture was computing all the facet-defining
inequalities for $E(N)$ in case $|N|\leq 5$; their numbers are shown in Table~\ref{tab.facet-numbers}.
If $|N|=2$, we have $E(N)=B(N)$. If $|N|=3$, then $E(N)$ coincides with the cone of supermodular functions.
The results in case that $|N|=4$ are given in Example~\ref{ex:E4} below.

\begin{example}\label{ex:E4}
We list all six permutational types of 44 facet-defining inequalities for $E(N)$ in case $|N|=4$. We present a type representative, a number of inequalities of this type, the induced set system (see Section~\ref{ssec.min-balan-ineq}), and indicate what is the conjugate inequality (see Definition~\ref{def.conjugate-ineq}).
\begin{enumerate}
\item $m(ab)-m(a)-m(b)+m (\emptyset)\geq 0$\hfill
$6\times$ \\
${\cal B}_{\alpha}=\{a,b\}$\hfill conjugate type 4.
\item $m(abc)-m(a)-m(bc)+m(\emptyset)\geq 0$\hfill
$12\times$\\
${\cal B}_{\alpha}=\{a,bc\}$\hfill conjugate type 5.
\item $2\cdot m(abc) -m(ab)-m(ac)-m(bc)+m(\emptyset)\geq 0$\hfill
$4\times$\\
${\cal B}_{\alpha}=\{ab,ac,bc\}$\hfill conjugate type 6.
\item $m(abcd)- m(acd)-m(bcd)+m(cd)\geq 0$\hfill
$6\times$\\
${\cal B}_{\alpha}=\{acd,bcd\}$\hfill  conjugate type 1.
\item $m(abcd) -m(ad)-m(bcd)+m(d)\geq 0$\hfill
$12\times$\\
${\cal B}_{\alpha}=\{ad,bcd\}$\hfill conjugate type 2.
\item $m(abcd)-m(ad)-m(bd)-m(cd)+2\cdot m(d)\geq 0$\hfill
$4\times$\\
${\cal B}_{\alpha}=\{ad,bd,cd\}$\hfill conjugate type 3.
\end{enumerate}
\end{example}

In case $|N|=5$ we have processed the results of computation
performed by Quaeghebeur \cite{Quaeghebeur09} in context of imprecise probabilities.
The point is that the concept of a coherent lower probability, used in that context,
corresponds to the notion of a normalized exact game. The reader is referred to \cite{MM17} for more details about the correspondence between some game-theoretical concepts and those appearing in the context of imprecise probabilities.

The next step was to classify the inequalities into their permutational types. Finally, we have analyzed the results from a theoretical point of view and formulated the following conjecture, which is known to be true in case $|N|\leq 5$.

\begin{conjecture}\label{conj.exact}
If $|N|\geq 3$ then the facet-defining inequalities for the cone $E(N)$ are just the inequalities corresponding to non-trivial irreducible min-balanced systems $\calB$ with $\bigcup\calB\subset N$ and their conjugate inequalities.
\end{conjecture}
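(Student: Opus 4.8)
The plan is to imitate the proof of Theorem~\ref{thm.total}, but built on the larger family of cones $\Theta^{N}_{D}$. By Lemma~\ref{lem.exact-cone}(2) the dual cone of $E(N)$ is ${\dv A}_{E}=\mbox{cone}\,(\bigcup_{\emptyset\neq D\subseteq N}\Theta^{N}_{D})$, and it is pointed by the same argument as in Lemma~\ref{lem.facet-conjugate}; hence the facets of $E(N)$ are in bijection with the extreme rays of ${\dv A}_{E}$, and the whole task reduces to characterizing those rays. Since every nonzero $\theta\in\Theta^{N}_{D}$ satisfies $\theta(\emptyset)+\theta(N)>0$ by Lemma~\ref{lem.dual-cones}, I would normalize by $\theta(\emptyset)+\theta(N)=1$ and work with the polytope $\Delta^{E}\coloneqq\mbox{conv}\,(\bigcup_{D}\{\,\theta\in\Theta^{N}_{D}:\theta(\emptyset)+\theta(N)=1\,\})$, whose conic hull is ${\dv A}_{E}$, so that facets of $E(N)$ correspond to extreme points of $\Delta^{E}$, exactly as $\Delta$ governs $T(N)$ in Lemma~\ref{lem.total-decomp}.

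The forward inclusion --- that every listed inequality is facet-defining --- should follow the $T(N)$ template together with the conjugation symmetry. For a non-trivial irreducible min-balanced $\calB$ with $M=\bigcup\calB\subset N$, the vector $\alpha_{\calB}$ from \eqref{eq.coef-def} is supported inside $\mathcal{P}(M)$ with $\alpha_{\calB}(N)=0$ and $\alpha_{\calB}(M)>0$, so (after normalization) it sits in $\Delta^{E}$ through $\Theta^{N}_{M}$. Its extremity can be argued as in Lemma~\ref{lem.total-decomp}: using $\muR\in E(N)$ to force any decomposing summand to be supported on $\mathcal{P}(M)$, and then invoking the irreducibility of $\calB$ via Corollary~\ref{cor.irreducible} to rule out a split into smaller carriers. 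The conjugate candidates $\alpha_{\calB}^{*}$ are then facet-defining by Lemma~\ref{lem.facet-conjugate}; this is also what forces the complementary systems of Definition~\ref{def.complementary-syst} to appear in the statement.

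The difficult direction is exhaustiveness: that ${\dv A}_{E}$ has no further extreme rays. The genuinely new ingredient, absent from the $T(N)$ analysis, is that one now needs the extreme rays of $\Theta^{N}_{D}$ for an arbitrary $D$, whereas Lemma~\ref{lem.balanced-decomp} supplies them only for $D=N$. I would split an extreme ray $\theta$ of $\Theta^{N}_{D}$ by the sign of $\theta(D)$: when $\theta(D)\leq0$ one has $\theta\in\Theta^{N}_{N}$ and Lemma~\ref{lem.balanced-decomp} gives a min-balanced system on $N$, while the reflection carries $\Theta^{N}_{D}$ onto $\Theta^{N}_{N\smin D}$ (one checks that the reflection $\theta^{*}$ satisfies $\theta^{*}(S)\leq0$ off $\{\emptyset,N\smin D,N\}$), pairing the rays with $\theta(D)>0$ with primal balanced systems on $N\smin D$. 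The crux is then a decomposition result in the spirit of Corollary~\ref{cor.irreducible}, but now spanning two generating families at once: every coefficient vector $\alpha_{\calB}$ of a min-balanced system whose carrier is all of $N$ must be shown to be a conic combination of primal vectors $\alpha_{\calC}$ with $\bigcup\calC\subset N$ and conjugate vectors $\alpha_{\calD}^{*}$. Such a vector is extreme in every individual $\Theta^{N}_{D}$ it meets, yet has to become non-extreme in the conic union --- the typical phenomenon of a ray that is extreme in two cones but not in their sum. Dually, this is precisely the identity $E(N)=T(N)\cap\{\,\m\in{\dv R}^{\caP}:\mre\in T(N)\,\}$, i.e.\ that a game is exact exactly when both it and its reflection (the reflection $\mre$ of Definition~\ref{def.reflection}, cf.\ the anti-dual game of Remark~\ref{rem.anti-dual}) are totally balanced; the inclusion $\subseteq$ is immediate from Lemma~\ref{lem.reflection}, and it is the reverse inclusion that is the true obstruction and keeps the statement at the level of a conjecture.

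Finally, the plan must explain the threshold $|N|\geq3$. For $|N|=2$ the only non-trivial min-balanced system on $N$ is the partition $\{\{a\},\{b\}\}$, whose carrier is all of $N$ and which admits no smaller proper subsystem to absorb it, in agreement with $E(N)=B(N)$; the absorption of full-carrier systems into primal systems of smaller carrier together with their conjugates can occur only once $N$ is large enough to host such proper pieces. Making this decomposition explicit and exhaustive for every $n$ is exactly the gap that a complete proof would have to close.
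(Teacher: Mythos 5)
This statement is an open conjecture in the paper, not a theorem: the authors state explicitly that they did not succeed in proving it, and they support it only by exhaustive computation for $|N|\leq 5$ together with one proven implication (Corollary~\ref{cor.facet-extend}: the conjecture implies $E(N)=T(N)\cap T^{*}(N)$), the converse implication being asserted with its proof omitted. So there is no paper proof to compare against, and your proposal --- by your own admission --- is not a proof either. What you do get right is the framework: by Lemma~\ref{lem.exact-cone}(2) the dual cone of $E(N)$ is $\mbox{cone}\,(\bigcup_{D}\Theta^{N}_{D})$, the normalization $\theta(\emptyset)+\theta(N)=1$ is the correct compact base by Lemma~\ref{lem.dual-cones} (normalizing $\theta(\emptyset)=1$ alone would fail for $D\subset N$), reflection does carry $\Theta^{N}_{D}$ onto $\Theta^{N}_{N\smin D}$, and the decisive obstruction is indeed that the extreme rays of $\Theta^{N}_{D}$ are characterized only for $D=N$ (Lemma~\ref{lem.balanced-decomp}). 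Your reduction of the difficulty to the identity $E(N)=T(N)\cap T^{*}(N)$ matches the paper's own discussion around Corollary~\ref{cor.facet-extend}. But naming the gap is not closing it; what you have is an accurate diagnosis of why the statement is a conjecture, not a proof of it.

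Two places where your plan is weaker than you present it. First, your ``forward'' direction is not independent of the open part: to imitate the extremity argument of Lemma~\ref{lem.total-decomp} inside your polytope $\Delta^{E}$, you must decompose $\tilde{\alpha}_{\calB}$ into extreme points of $\Delta^{E}$, and controlling those summands requires knowing the extreme rays of all the cones $\Theta^{N}_{D}$ --- which is precisely what is unknown. In the paper's proof for $T(N)$, the extremity step works only because the preceding step has already identified every extreme point of $\Delta$; your plan has no analogue of that step, so both directions of the conjecture remain open under it. Second, your claim that exhaustiveness ``is precisely'' the identity $E(N)=T(N)\cap T^{*}(N)$ is too quick: that identity only yields that every facet normal of $E(N)$ lies in the conic hull of the vectors $\alpha_{\calC}$ for \emph{all} irreducible systems (including those with carrier equal to $N$) and their reflections; turning this into the exact facet list --- ruling out the full-carrier systems and confirming that each strict-carrier system and each conjugate genuinely defines a facet --- is the additional content whose proof the authors say would take some ten pages and omit. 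Relatedly, your case split on the sign of $\theta(D)$ does not do what you hope: a ray of $\Theta^{N}_{D}$ with $\theta(D)>0$ reflects to a ray of $\Theta^{N}_{N\smin D}$ with $\theta^{*}(N\smin D)>0$, i.e., again a ``positive'' ray in the image cone, so reflection alone does not reduce the positive case to Lemma~\ref{lem.balanced-decomp}.
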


The conjecture agrees with the fact that $E(N)$ is closed under reflection (see Lemma~\ref{lem.facet-conjugate}). As a consequence of our main result we obtain the following simpler version of the conjecture, which is formally weaker.

\begin{corollary}\label{cor.facet-extend}\rm
The validity of Conjecture~\ref{conj.exact} implies
$$
E(N)=T(N)\cap T^{*}(N)\qquad
\mbox{where~ $T^{*}(N) ~\coloneqq~ \{ \m\in {\dv R}^{\caP}\,:\ \mre\in T(N)\,\}$.}
$$
\end{corollary}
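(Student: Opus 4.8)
The plan is to establish the two inclusions separately, noting that only the reverse one will actually consume the conjecture. I would prove $E(N)\subseteq T(N)\cap T^{*}(N)$ first, unconditionally. Since $E(N)\subseteq T(N)$ is already recorded in the excerpt, it suffices to check $E(N)\subseteq T^{*}(N)$. Here I would invoke Lemma~\ref{lem.reflection}: for $\m\in E(N)$ the reflection $\mre$ again lies in $E(N)\subseteq T(N)$, which by the very definition of $T^{*}(N)$ means $\m\in T^{*}(N)$. Combining the two gives $E(N)\subseteq T(N)\cap T^{*}(N)$, with no appeal to Conjecture~\ref{conj.exact}.

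For the reverse inclusion $T(N)\cap T^{*}(N)\subseteq E(N)$ I would first translate $T^{*}(N)$ into inequalities. By Theorem~\ref{thm.total} the cone $T(N)$ is cut out by the inequalities $\llangle\alpha_{\calB},\m\rrangle\geq 0$ ranging over all non-trivial irreducible min-balanced systems $\calB$ with $\bigcup\calB\subseteq N$. Applying the identity \eqref{eq.conjug-relect} in the form $\llangle\alpha_{\calB},\mre\rrangle=\llangle\alpha_{\calB}^{*},\m\rrangle$, the membership condition $\mre\in T(N)$ becomes $\llangle\alpha_{\calB}^{*},\m\rrangle\geq 0$ for all those $\calB$. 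Thus $T^{*}(N)$ is exactly the solution set of the conjugate inequalities to the facet-defining inequalities of $T(N)$, and $T(N)\cap T^{*}(N)$ is the solution set of both families simultaneously.

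The decisive step is then a bookkeeping comparison of the two index families. Assuming Conjecture~\ref{conj.exact}, the facet-defining inequalities of $E(N)$ are precisely the $\llangle\alpha_{\calB},\m\rrangle\geq 0$ together with their conjugates $\llangle\alpha_{\calB}^{*},\m\rrangle\geq 0$, now for non-trivial irreducible min-balanced $\calB$ with the \emph{strict} carrier condition $\bigcup\calB\subset N$. Since every such $\calB$ also satisfies $\bigcup\calB\subseteq N$, the direct inequalities sit among the defining inequalities of $T(N)$ and the conjugate ones among those of $T^{*}(N)$; hence every $\m\in T(N)\cap T^{*}(N)$ satisfies all facet-defining inequalities of $E(N)$. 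Because $E(N)$ is a full-dimensional polyhedral cone in ${\dv R}^{\caP}$, it equals the intersection of the half-spaces given by its facet-defining inequalities, so $\m\in E(N)$, finishing the reverse inclusion.

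The only genuinely delicate point, which I would flag as the main obstacle, is getting the inequality description of $T^{*}(N)$ exactly right: one must use that reflection is an involutive linear bijection of ${\dv R}^{\caP}$ carrying $T(N)$ onto $T^{*}(N)$, so that the conjugates of the $T(N)$-facets really do form a complete defining system for $T^{*}(N)$, and one must track that the strict-versus-nonstrict carrier condition ($\bigcup\calB\subset N$ in the conjecture against $\bigcup\calB\subseteq N$ in Theorem~\ref{thm.total}) is precisely what makes the $E(N)$-family a sub-family of the $T(N)\cap T^{*}(N)$-family. Everything else reduces to the routine fact that imposing more valid inequalities can only shrink a cone.
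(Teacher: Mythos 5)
Your proof is correct and follows essentially the same route as the paper's: the unconditional inclusion $E(N)\subseteq T(N)\cap T^{*}(N)$ via Lemma~\ref{lem.reflection}, and the converse by combining Theorem~\ref{thm.total} with the identity \eqref{eq.conjug-relect} to see that any $\m\in T(N)\cap T^{*}(N)$ satisfies all inequalities that Conjecture~\ref{conj.exact} declares facet-defining for $E(N)$. Your extra care about the strict versus non-strict carrier condition and about why the conjugate inequalities cut out $T^{*}(N)$ only makes explicit what the paper leaves implicit.
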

\noindent
Thus, in words, the weaker version of the conjecture is as follows (see Remark~\ref{rem.anti-dual}):
\begin{quote}
{\em A game $\m\in {\cal G}(N)$ is exact iff both $m$ and its anti-dual $-\mstar$ are totally balanced.}
\end{quote}

\begin{proof}
The inclusion $E(N)\subseteq T(N)$ and the fact that $E(N)$ is closed
under reflection (see Lemma~\ref{lem.reflection}) imply $E(N)\subseteq T^{*}(N)$.
Hence, the inclusion $E(N)\subseteq T(N)\cap T^{*}(N)$ surely holds.
To show the converse inclusion consider $\m\in T(N)\cap T^{*}(N)$ and, by Theorem~\ref{thm.total} and the formula \eqref{eq.conjug-relect},
observe that inequalities $\llangle \alpha_{\calB},\m\rrangle\geq 0$ and
$\llangle \alpha^{*}_{\calB},m\rrangle \stackrel{\eqref{eq.conjug-relect}}{=}\llangle \alpha_{\calB},\mre\rrangle\geq 0$
are valid for every non-trivial irreducible min-balanced system $\calB$ with $\bigcup\calB\subset N$.
This implies, by validity of Conjecture~\ref{conj.exact}, that $\m\in E(N)$.
Thus, $T(N)\cap T^{*}(N)\subseteq E(N)$.
\qed \end{proof}

In fact, we are able to prove the converse of the statement from Corollary~\ref{cor.facet-extend}, namely
that $E(N)=T(N)\cap T^{*}(N)$ implies the validity of the conjecture. Nonetheless,
because a detailed proof of that statement would require 10 additional pages of technicalities while such a result is only marginally relevant to the main topic of this paper,
we decided to omit that proof herein.

\section{Conclusions}
The prime focus of this paper is on the polyhedral cone of totally balanced games.
We have introduced a highly relevant concept of an {\em irreducible min-balanced\/} set system
(Definition~\ref{def.irreducible}).
Our main result, Theorem \ref{thm.total}, says that there is a one-to-one correspondence between facet-defining inequalities for the cone of totally balanced games and non-trivial irreducible min-balanced systems on subsets of the player set. We have only paid attention to the outer (= facial) description of the cone, whereas the problem of its inner description (= characterizing it as the conic hull of finitely
many vectors) seems to be relevant as well; this remains to be an open task.

Some of our minor results concern the cone of balanced games. The extended version of this cone
is closed under reflection transformation, which implies that every facet-defining inequality
for it is accompanied with a {\em conjugate\/} facet-defining inequality (Lemma~\ref{lem.facet-conjugate}).
We have re-visited a procedure that associates a facet-defining inequality with
a min-balanced set system and extended a well-known
classic result by Shapley \cite{Shapley1967On-balanced-set} saying that the facet-defining inequalities for the cone of balanced games correspond to non-trivial min-balanced systems on the whole player set $N$; see Section~\ref{ssec.min-balan-ineq}.
What we have shown is that a complementary set system to a min-balanced set system on $N$ is also
a min-balanced system on $N$ and gives a conjugate inequality (Corollary~\ref{cor.balanced-dual-N}).
Further side-result is the inner description of the cone of balanced games (Corollary~\ref{cor.balan-inner}).

Our tools made it also possible to contribute to the study of the cone of exact games.
The extended version of this cone is also closed under reflection transformation, which implies that
facet-defining inequalities for it come in pairs of mutually conjugate inequalities (Lemma~\ref{lem.facet-conjugate}).
In this paper we have formulated a conjecture about what are facet-defining inequalities for this
cone, which complies with the above observation (see Conjecture~\ref{conj.exact}).
Our hypothesis is supported by computations for a small number of players; thus, we
will concentrate on proving/disproving this conjecture in our future research.
Note in this context that the extremity of an exact game can be recognized by a relatively simple linear-algebraic test; see \cite[Proposition 4]{SK2018} for the details.

\appendix \label{app}

\section{Min-balanced systems for a small number of players}\label{sec.catalogue}
Here we give a list of all permutational types of non-trivial min-balanced systems for at most four players.
We present a type representative, indicate what is the type of the complementary system (Definition~\ref{def.complementary-syst}) and
whether the type is irreducible, give the standard inequality ascribed to the system (see Section~\ref{ssec.min-balan-ineq})
and say what is the number of systems of this type.
In order to shorten the notation we write $abc$ instead of $\{a,b,c\}$.

\subsection{Two players}

The only non-trivial min-balanced system on $N=\{a,b\}$ is as follows.

\begin{enumerate}
\item $\calB=\{a,b\}$ \hfill self-complementary, {\em irreducible}\\
$m(ab)-m(a)-m(b)+m(\emptyset)\geq 0$\hfill $1\times$
\end{enumerate}

\subsection{Three players}

The following are all three types of 5 non-trivial min-balanced systems on $N=\{a,b,c\}$.

\begin{enumerate}
\item $\calB=\{ a, b, c\}$\hfill complementary type 3.\\
$m(abc)-m(a)-m(b)-m(c)+2\cdot m(\emptyset)\geq 0$\hfill $1\times$
\item $\calB=\{ a, bc\}$ \hfill  self-complementary, {\em irreducible}\\
$m(abc)-m(a)-m(bc)+m(\emptyset)\geq 0$\hfill $3\times$
\item $\calB=\{ ab, ac , bc\}$\hfill complementary type 1., {\em irreducible}\\
$2\cdot m(abc)-m(ab)-m(ac)-m(bc)+m(\emptyset)\geq 0$~\hfill $1\times$
\end{enumerate}

Thus, one has two types of 4 irreducible min-balanced systems on $N=\{a,b,c\}$.

\subsection{Four players}

The following are all nine types of 41 non-trivial min-balanced system on $N=\{a,b,c,d\}$.

\begin{enumerate}
\item $\calB=\{ a, b, c, d\}$\hfill complementary type 9.\\
$m(abcd)-m(a)-m(b)-m(c)-m(d)+3\cdot m(\emptyset)\geq 0$\hfill
$1\times$
\item $\calB=\{a, b, cd\}$\hfill complementary type 6.\\
$m(abcd)-m(a)-m(b)-m(cd)+2\cdot m(\emptyset)\geq 0$\hfill
$6\times$
\item $\calB=\{ ab, cd\}$\hfill self-complementary, {\em irreducible}\\
$m(abcd)-m(ab)-m(cd)+m(\emptyset)\geq 0$\hfill $3\times$
\item $\calB=\{ a, bcd\}$\hfill self-complementary, {\em irreducible}\\
$m(abcd)-m(a)-m(bcd)+m(\emptyset)\geq 0$\hfill $4\times$
\item $\calB=\{a, bc, bd, cd\}$\hfill complementary type 8.\\
$2\cdot m(abcd)-2\cdot m(a)-m(bc)-m(bd)-m(cd)+3\cdot m(\emptyset)\geq 0$\hfill
$4\times$
\item $\calB=\{ ab, acd, bcd\}$\hfill complementary type 2., {\em irreducible}\\
$2\cdot m(abcd)-m(ab)-m(acd)-m(bcd)+m(\emptyset)\geq 0$\hfill
$6\times$
\item $\calB=\{ a, bd, cd, abc\}$\hfill self-complementary\\
$2\cdot m(abcd)-m(a)-m(bd)-m(cd)-m(abc)+2\cdot m(\emptyset)\geq 0$\hfill
$12\times$
\item $\calB=\{ ab, ac, ad, bcd\}$\hfill complementary type 5., {\em irreducible}\\
$3\cdot m(abcd)-m(ab)-m(ac)-m(ad)-2\cdot m(bcd)+2\cdot m(\emptyset)\geq 0$\hfill
$4\times$
\item $\calB=\{ abc, abd, acd, bcd\}$\hfill complementary type 1., {\em irreducible}\\
$3\cdot m(abcd)-m(abc)-m(abd)-m(acd)-m(bcd)+m(\emptyset)\geq 0$\hfill
$1\times$
\end{enumerate}

Thus, there are five types of 18 irreducible min-balances systems on $N=\{a,b,c,d\}$.

\end{document}